\newtheorem{thm}{Theorem}[section]
\newtheorem{lem}[thm]{Lemma}
\newtheorem{cor}[thm]{Corollary}
\newtheorem{prop}[thm]{Proposition}
\newtheorem{defi}[thm]{Definition} 
\newtheorem{rmk}[thm]{Remark}
\newtheorem{exmp}[thm]{Example}
\renewcommand{\theequation}{\arabic{section}.\arabic{equation}}
 \numberwithin{equation}{section}
\begin{document}

\baselineskip15pt

\title
{Anticipating Random Periodic Solutions--I. 
 SDEs with Multiplicative Linear Noise}

\author[1]{
Chunrong Feng}
\author [1,2] {Yue Wu}
\author[1]{Huaizhong Zhao}
\affil[1]{Department of Mathematical Sciences, Loughborough
University, LE11 3TU, UK}
 \affil[2] {Current address: Department of Mathematical Sciences, Xi'an Jiaotong-Liverpool University, Suzhou 215123, China}
\affil[ ]{C.Feng@lboro.ac.uk, Y.Wu@lboro.ac.uk, H.Zhao@lboro.ac.uk}
\date{}

\maketitle
\newcounter{bean}
\vskip-10pt

\begin{abstract}
In this paper, we study the existence of random periodic solutions for
semilinear stochastic differential equations. We identify them
as solutions of coupled forward-backward infinite horizon stochastic
integral equations (IHSIEs), using the "substitution theorem" of stochastic differential equations 
with anticipating initial conditions. In general, random periodic solutions and the solutions of IHSIEs,
are anticipating.  For the linear noise case, with the help of
the exponential dichotomy given in the multiplicative ergodic theorem, we can identify them as the 
solutions of infinite horizon random integral equations (IHSIEs). We then 
solve a localised forward-backward IHRIE in $C(\mathbb{R}, L^2_{loc}(\Omega))$
using an argument of truncations, the Malliavin calculus, the relative compactness of
Wiener-Sobolev spaces in $C([0, T], L^2(\Omega))$ and Schauder's fixed point theorem. 
We finally measurably glue the local solutions together to obtain a global solution in $C(\mathbb{R}, 
L^2(\Omega))$. Thus we obtain the existence of a random periodic solution and a periodic measure. 
\medskip

\noindent
{\bf Keywords:} random periodic solutions, periodic measures, semilinear stochastic differential equations,  relative compactness, Malliavin derivative, 
infinite horizon stochastic integral equations, exponential dichotomy.

\end{abstract}

\pagestyle{fancy}
\fancyhf{}
\fancyhead[LE,RO]{\thepage}
\fancyhead[LO]{\small{Anticipating Random Periodic Solutions--I. 
 SDEs with Multiplicative Linear Noise}}
\fancyhead[RE]{\small{C.R. Feng, Y. Wu, H.Z. Zhao}}
\renewcommand{\headrulewidth}{0.1pt}

 \renewcommand{\theequation}{\arabic{section}.\arabic{equation}}

\section{Introduction}

Many real world phenomena having the nature of mixing periodicity and randomness, e.g., change of temperatures on earth and seasonal economic data (\cite{franses}). 
Physicists have attempted to study random perturbations to periodic solutions for some time. They used 
first linear approximations or asymptotic expansions in small noise regime, e.g. see
\cite{kampen},\cite{knoblock}.  
The approach in \cite{knoblock} was to seek $Y(t+\tau,\omega)$ returning to a neighbourhood of $Y(t,\omega)$ for each noise realisation, where
$\tau>0$ is a fixed number. This reveals certain 
information about the "periodicity" under small noise perturbations. Efforts were also made in the mathematics community to seek a periodic solution $Y$ such    
that
$Y(0,\omega)=Y(\tau,\omega)=Y(2\tau,\omega)=\cdots $ (\cite{brzezniak},\cite{peng}).  However, this kind 
of strict periodicity exists only in very special situations in stochastic contexts.
In general, random perturbations to a periodic solution break the strict periodicity immediately, similar to the case that random perturbations 
break fixed points. The concept of stationary solutions is the corresponding notion of fixed points in the stochastic counterpart and has been subject to intensive study
in mathematics literature (\cite{kh-ma-si},\cite{mattingly},\cite{si2},\cite{zh-zh}). 
One of the obstacles to
make a systematic progress in the study of the random periodicity was the lack of a rigorous mathematical definition in a great generality and appropriate mathematical
tools.  

Recently random periodic solutions has been defined with a help of an observation that they
may be regarded as stationary solutions on sequences of fixed discrete times of equal length of 
one period.
The existence was studied for cocycles  in \cite{zh-zheng} using a qualitative method, and for stochastic semiflows in \cite{F-zh1},\cite{F-zh2} using some analysis tools. 
 There have been some more recent results 
including \cite{chekroun} on random attractors of the stochastic TJ model in climate dynamics,
\cite{wang} on bifurcations of stochastic reaction diffusion equations and \cite{blw}
on stochastic lattice systems. It was recently proved in \cite{F-zh3} that random periodic solutions and periodic measures are "equivalent" 
in the following sense. 
A random periodic solution gives rise to a periodic measure. 
Conversely from a periodic measure one can construct a random periodic process 
on an enlarged probability space, of which the law is the periodic measure. It was then proved that the 
strong law of large numbers (SLLN) 
holds for periodic measures and corresponding random periodic processes thus it gives a statistical description. There are numerous physically relevant
stochastic differential equations satisfying the conditions in this paper so they have random periodic solutions (Theorems \ref{Main1}, \ref{Main2}, \ref{Main3}). 

First, recall the definition of the random periodic solutions for stochastic semi-flows given in \cite{F-zh1}.
Let $X$ be a separable Banach space. Denote by $(\Omega,{\cal F},P,(\theta (s))_{s\in \mathbb{R}})$ a metric dynamical system, where 
$\theta (s):\Omega\to\Omega$ is  assumed to be $P$-preserving and measurably invertible for all $s\in \mathbb{R}$, $\Delta:=\{(t,s)\in \mathbb{R}^2, s\leq t\}$. Consider a stochastic semi-flow $u: \Delta \times \Omega\times X\to X$, which satisfies the following standard condition
\begin{eqnarray}{\label{16}}
u(t,r,\omega)=u(t,s,\omega)\circ u(s,r,\omega),\ \ {\rm for\ all } \ r\leq s\leq t,\ r, s,t\in \mathbb{R},\ \mbox{for }a.e.\ \omega\in\Omega.
\end{eqnarray}
We do not assume the map $u(t,s,\omega): X\to X$ to be invertible for $(t,s)\in \Delta,\ \omega \in \Omega$ in the following definition. 

\begin{defi}\label{feng-zhao1}
A random periodic path of period $\tau$ of the semi-flow $u: \Delta\times \Omega\times X\to X$
 is an ${\cal F}$-measurable
map $Y:\mathbb{R}\times \Omega\to \mathbb{R}^d$ such that
\begin{eqnarray}
u(t,s, Y(s,\omega), \omega)=Y(t,\omega),\ 
Y(s+\tau,\omega)=Y(s, \theta_\tau \omega),\ \  t, s\in \mathbb{R}, \ t\geq s, \      \ a.e. \omega\in \Omega.
\end{eqnarray}
\end{defi}
  
In this paper we consider the following stochastic differential equation with multiplicative noise since the influence of noise in many cases depends on the intensity of the solution
\begin{eqnarray}\label{feng-zhao21}
{\Bigg\{\begin{array}{l}du(t)=Au(t)\,dt+F(t,u(t))\, dt+\sum_{k=1}^{M}B_k(t,u(t))\circ  dW^k_t,  \ \ \ \ \ \ t\geq s 
\\
u(s)=x\in \mathbb{R}^d,
\end{array}
}
                                    \end{eqnarray}
where  $ W_t=\{W^k_t \ 1\leq k\leq M, t\in \mathbb{R}\}$ is an M-dimensional mutually-independent standard Brownian motion under the canonical probability space $(\Omega,\mathcal{F},(\mathcal{F}^t)_{t\in \mathbb{R}},  \mathbb{P})$. Assume there exists a constant $\tau>0$ such that
$F(t+\tau,u)=F(t,u)$, $B_k(t+\tau,u)=B_k(t,u)$. Such kind of SDEs is called $\tau$-periodic. Suppose that the continuous functions $F(t, u)$ and $B_k(t,u)$ are Lipschitz continuous in $u$ so that 
the initial value problem (\ref{feng-zhao21})
has a unique solution (see \cite{ik-wa}).  It is easy to see that the Lipschitz condition is always satisfied under the conditions given in the following sections.

Note the stochastic integral is of the Stratonovich type.
Recall the connection of the It\^o and Stratonovich integrals
\begin{eqnarray}\label{Correction}
 B_k(t,u(t))\circ  dW^k_t =  B_k(t,u(t)) dW^k_t +\frac{1}{2} \nabla_x B_k(t,u(t)) B_k(t,u(t)) dt.
\end{eqnarray}
 
 Random periodic
solutions of $\tau$-periodic semilinear stochastic differential equations with additive noise were studied in \cite{F-zh1} ($B_k(t,u)=B_k(t)$). The existence was obtained under conditions that $||F||_{\infty}<\infty, ||\nabla F||_{\infty}<\infty, ||B_k||_{\infty}<\infty$, $B_k(t)$ is H\"older continuous and the matrix $A$ is hyperbolic. 
 There is no requirement about $F$ being monotone or the Lipschitz constant being controlled by
  the spectrum of $A$. In fact the system is non-dissipative as it is contracting 
 in certain directions and expanding in  certain other directions. So the random periodic solution, if exists, is not stable or completely unstable.
 The pull-back method does not work. In fact the random periodic solution is an anticipating stochastic process, which depends on the
 whole path of the noise. 

If the stochastic system is dissipative, we can use other methods such as pull-back to study the problem, where there is no anticipating 
issue at all (\cite{zh-zheng}). In this paper, we consider non-dissipative systems with the $d\times d$ matrix $A$ being assumed to be hyperbolic. Similar to the case with additive noise, if 
this equation has a random periodic solution $Y$, it is also anticipating.  However the nonadaptedness causes a real difficulty in the analysis of
$\int_0^tB_k(s,Y(s))\circ dW^k_s$, though this term is still well defined through (\ref{Correction}). The first integral on the right hand side is of the Skorohod type,
of which the $L^2$-norm is given by the Skorohod isometry (\cite{nualart2})
\begin{eqnarray}\label{zhao22}
&&||\int_0^tB_k(s,Y(s))dW^k_s||_{L^2(\Omega)}^2\\
&=&\int_0^t\|B_k(s,Y(s))\|_{L^2(\Omega)}^2ds+\int_0^t \int_0^t (\nabla_x B_k(s,Y(s))\mathcal{D}^{k}_{r_1} Y(s))^{T}\nabla_x B_k(s,Y(s))\mathcal{D}^k_{r_2} Y(s) dr_1 dr_2,  \nonumber
\end{eqnarray}
where $A^T$ is the transpose of $A$ and $\mathcal{D}Y$ the Malliavin derivative of $Y$. 

 For a finite time SDEs with a given anticipating initial condition, Nualart proved a substitution 
theorem in \cite{nualart2}. With the help of this result, we can prove that
the random periodic solution is identified as a solution of the following IHSIEs
\begin{eqnarray}\label{zhao21}
Y(t)&=&\int _{-\infty}^t{\rm e}^{A(t-r)}P^-F(r,Y(r))dr-\int _t^{+\infty}{\rm e}^{A(t-r)}P^+F(r,Y(r))dr
\\
&&
+\sum_{k=1}^M\int _{-\infty}^t{\rm e}^{A(t-r)}P^-B_k(r,Y(r))\circ dW^k_r-\sum_{k=1}^M\int _t^{+\infty}{\rm e}^{A(t-r)}P^+B_k(r,Y(r))\circ dW^k_r.\nonumber
\end{eqnarray}
Here $P^-$ is the projection from $\mathbb{R}^d$ to the subspace spanned by the eigenvectors corresponding to the eigenvalue of $A$ with negative real parts and $P^+=I-P^-$.  
Thus our problem is reduced to solve 
(\ref{zhao21}). Note here
Eqn. (\ref{zhao21}) can be represented as 
an initial value problem (\ref{feng-zhao21}) with initial value $u(s)=Y(s)$ as for $t\geq s$, from (\ref{zhao21}),
\begin{eqnarray}\label{1.10}
Y(t)&=&{\rm e}^{A(t-s)}\int _{-\infty}^s{\rm e}^{A(s-r)}P^-F(r,Y(r))dr+\int _{s}^t{\rm e}^{A(t-r)}P^-F(r,Y(r))dr\nonumber\\
&&
+{\rm e}^{A(t-s)}\sum_{k=1}^M\int _{-\infty}^s{\rm e}^{A(s-r)}P^-B_k(r,Y(r))\circ dW^k_r+\int _{s}^t{\rm e}^{A(t-r)}P^-B_k(r,Y(r))\circ dW^k_r\nonumber\\
&&-{\rm e}^{A(t-s)}\int _s^{+\infty}{\rm e}^{A(s-r)}P^+F(r,Y(r))dr+\int _{s}^t{\rm e}^{A(t-r)}P^+F(r,Y(r))dr\nonumber\\
&&-{\rm e}^{A(t-s)}\sum_{k=1}^M\int _s^{+\infty}{\rm e}^{A(s-r)}P^+B_k(r,Y(r))\circ dW^k_r+\sum_{k=1}^M\int _{s}^t{\rm e}^{A(t-r)}P^+B_k(r,Y(r))\circ dW^k_r\nonumber
\\
&=&{\rm e}^{A(t-s)}Y(s)+\int _{s}^t{\rm e}^{A(t-r)}F(r,Y(r))dr+\sum_{k=1}^M\int _{s}^t{\rm e}^{A(t-r)}B_k(r,Y(r))\circ dW^k_r.
\end{eqnarray}
 This would be exactly the type of SDEs considered in \cite{nualart2} with a given anticipating initial value $Y(s)$ at time $s$ if it were known. 
However, in our context, $Y(s)$ is not known, but part of the solution of Eqn. (\ref{zhao21}) when $t=s$.

It turns out that the anticipating issue causes some real difficulties in solving (\ref{zhao21}), even in the linear noise case, due to the fact that the $L^2$ norm of $\int_0^tB_k(s,Y(s))\circ dW^k(s)$
involves the Malliavin derivative of $Y$ in (\ref{zhao22}). 
In this paper, we only consider the linear noise case. We use the stochastic linear evolution operator and subsequently identify 
random periodic solutions as the solutions of forward-backward coupled infinite horizon random
integral equations (IHRIEs)
with the help of the exponential dichotomy given in Oseledets' multiplicative ergodic theorem (MET). 

We cannot solve the IHRIEs pathwise though the equations are given in a pathwise manner. The major flaw of a pathwise 
approach is the lack of the measurability to their solutions. Thus we seek a solution in $C((-\infty,+\infty),L^2(\Omega))$.
Relative compactness is key in this analysis. However, pointwise (fix $\omega$) relative compactness theorem such as Arzel\`a-Ascoli Lemma 
is not enough. 
Another difficulty is the pathwise 
stability of $\Phi(t,\theta_s\omega)P^-$ in the MET and the stability in $L^2(\Omega)$ are different. For example, as $t\to \infty$, $e^{-\frac{1}{2}t+W_t}\to 0$ a.s., but $\mathbb{E}\left(e^{-\frac{1}{2}t+W_t}
\right)^2\to \infty$. To overcome this difficulty, we construct a sequence of localised IHRIEs. 
We then use  the relative compactness of
Wiener-Sobolev spaces in $C([0, T], L^2(\Omega))$ and Schauder's fixed point theorem
to solve the equation in $C(\mathbb{R}, L^2_{loc}(\Omega))$. 
We finally measurably glue the local solutions together to obtain a global solution in $C(\mathbb{R}, 
L^2(\Omega))$. It is interesting to note that the local solution may not converge to the global solution.
 
 With the existence of random periodic solution, we can construct a periodic measure $\mu_s$ of the skew product on $(\Omega\times R^d, {\cal F}\bigotimes {\cal B}(R^d))$ according to \cite{F-zh3}. The factorisation of $\mu_s$ which is given by 
 $(\mu_s)_{\omega}=\delta_{Y(s,\theta(-s)\omega)}$ is anticipating. 
 
We would like to point out that anticipating equilibrium processes exist in reality. For example, an anticipating stationary solution was 
found in the transition state problem in chemical reaction processes (c.f. \cite{TB}). 

\section{Preliminaries and the equivalence of random periodic solutions for hyperbolic systems and the coupled forward-backward IHSIEs}

Consider $\Omega=C_0(\mathbb{R},\mathbb{R}^{M}):=\{\omega\in C(\mathbb{R},\mathbb{R}^{M}):\omega(0)=0 \}$, $W_t(\omega):=\omega(t)$, and ${\cal F}^t:=\vee_{s\leq t}{\cal F}_s^t$ with ${\cal F}_s^t:=\sigma(W_u-W_v, s\leq v\leq u\leq t)$. Besides, we define a shift that leaves $\Omega$ invariant by
$$\theta_t:\Omega\to \Omega,\ \ \theta_s\omega(t)=\omega(t+s)-\omega(s),\ s,t\in \mathbb{R},$$
and thus the shift $\theta$ is $P$-measure preserving.  

First we briefly recall the Skorohod integral, Stratonovich intergal and prove Malliavin derivative's norm preserving property under the measure preserving operator 
$\theta$. We only need to consider $1$-dimensional Brownian motion $W$ on $\mathbb{R}$. 
The multidimensional case can be dealt with similarly.
 
Denote 
by $\hat L^2(\mathbb{R}^m)$ the set of symmetric functions in $L^2(\mathbb{R}^m)$.
Define for $f\in \hat L^2(\mathbb{R}^m)$,
$$I_m(f)=\int_{\mathbb{R}^m}f(t_1,\ldots,t_m)dW_{t_1}\ldots dW_{t_m}.$$
 It is well known that
 \begin{eqnarray}
\mathbb{E}[I_m(f)I_n(g)]=~\bigg\{\begin{array}{l}
                                        0,~~~  \ \ \ \ \ \ \ \ \ \ \ \ \ \ \ \ \ \ \ \ \ if\ n\neq m, \\
                                        m!<f, g>_{L^2{(\mathbb{R}^m)}},\ ~if\ n= m.
                                     \end{array}
\end{eqnarray}
\begin{defi}\label{Skorohod}\upshape{(Skorohod integral \cite{bernt})} Suppose that $v(t,\omega)$ is a stochastic process  such that $v(t,\cdot)$ is $\mathcal{F}$-measurable and square-integrable for all $t\in \mathbb{R}$.
Thus it has the following Wiener-It\^o chaos expansion 
\begin{equation*}
v(t)=\sum_{m=0}^{\infty}I_m(f_m(\cdot,t)),
\end{equation*}
with $f_m(\cdot,t)\in \hat L^2(\mathbb{R}^m)$ for each $t\in \mathbb{R}$.
Then the Skorohod integral is defined as
\begin{equation}\label{Skorohod1}
\delta(v):=\int_{\mathbb{R}}v(t,\omega)\delta W_t:=\sum_{m=0}^{\infty}I_{m+1}(\tilde{f}_m),
\end{equation}
where $\tilde{f}_m\in \hat L^2(\mathbb{R}^{m+1})$ is the symmetrization of $f_m(t_1,\cdots,t_m,t)$ as a function of all $m+1$ variables.
We say $u$ is Skorohod-integrable and write $u\in Dom(\delta)$ if the series in (\ref{Skorohod1}) converges in $L^2(\Omega)$.
\end{defi}

Another kind of integral is defined in the probability sense (1-dimensional case):
\begin{defi}\label{Strato}\upshape{(Stratonovich integral \cite{nualart2})} A measurable process $v(t,\omega)$ such that $\int_{\mathbb{R}}|v_t|dt<\infty$ a.s. is Stratonovich integrable if the familiy $S^{\pi}$ 
$$S^{\pi}:=\int_{\mathbb{R}}v_tW^{\pi}_{t}dt,$$
where
$$W^{\pi}_{t}=\sum_{i=0}^{n-1}\dfrac{W_{t_{i+1}}-W_{t_{i}}}{t_{i+1}-t_i}\chi_{(t_i,t_{i+1}]}(t),$$
converges in probability as $|\pi|\to 0$ and in this case the limit is called the Stratonovich integral, denoted by $\int_{\mathbb{R}}v_t\circ dW_t.$
\end{defi}

Note that the relation (\ref{Correction}) holds between the Stratonovich integral and the Skorohod integral (c.f. \cite{nualart2}), i.e.,
 \begin{eqnarray}\label{Correction2}
\int_{\mathbb{R}} B(s,u(s))\circ  dW_s =  \int_{\mathbb{R}} B(s,u(s)) \delta W_s +\frac{1}{2} \int_{\mathbb{R}}\nabla_x B(s,u(s)) B(s,u(s)) ds,
\end{eqnarray}
where $B\in \mathcal{L}(\mathbb{R}^d)$.

 Let $\mathcal{S}$ denote the class of smooth and cylindrical random variables of the form 
$$G=f(W(h_1),\cdots,W(h_n)),$$
where $f\in C_p^\infty(\mathbb{R}^n)$, i.e., $f$ and all its partial derivatives have polynomial growth order, and $W(h_i)=\int_{\mathbb{R}}h_i(s)dW_s$, $h_1,\cdots,h_n\in L^2(\mathbb{R})$, and $n\geq 1$. The derivative of $G$ is the $L^2(\mathbb{R})$-valued random variable given by
\begin{equation*}
\mathcal{D}_rG=\sum_{i=1}^n\frac{\partial f}{\partial x_i}(W(h_1),\cdots,W(h_n))h_i(r).
\end{equation*}
Then denote by $\mathcal{D}^{1,p}$ the domain of $\mathcal{D}$ in $L^p(\Omega)$, i.e. $\mathcal{D}^{1,p}$ is the closure of $\mathcal{S}$ with respect to the norm 
$$||G||_{1,p}=\left(\mathbb{E}|G|^p+\mathbb{E}||\mathcal{D}_{\cdot}G||^p_{ L^2(\mathbb{R})}\right)^{\frac{1}{p}}.$$

The following simple result is about the ${\cal D}^{1,2}$-norm-preserving property under the measure preserving operator.  It will play a crucial role in the subsequent argument.
This property is not normally true for Malliavin derivatives, but it is here due to the fact that the time interval is the whole real line  $\mathbb{R}$.    

\begin{lem}\label{Preserving}\upshape{(Norm preserving in $\mathcal{D}^{1,2}$)} 
Suppose $G(\cdot)\in \mathcal{D}^{1,2}$, then for all $h\in \mathbb{R}$, $G(\theta_h\cdot)\in  \mathcal{D}^{1,2}$, and
$$\|G(\theta_{h}\cdot)\|_{1,2}=\|G(\cdot)\|_{1,2},$$
where $\theta_h:\Omega\to \Omega$, $h\in \mathbb{R}$ is the measure preserving measurable dynamical system on $(\Omega, \mathcal{F},\mathbb{P})$.
\end{lem}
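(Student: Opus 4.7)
The plan is to reduce to smooth cylindrical functionals, where the identity is a direct computation, and then extend by density. The key point to isolate at the start is that the shift $\theta_h$ on $\Omega$ acts on Brownian stochastic integrals via translation of the integrand on $\mathbb{R}$, and translation on $L^2(\mathbb{R})$ is an isometry precisely because the time parameter ranges over the full line. This is exactly the reason advertised in the paper.

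First I would verify the claim for $G\in\mathcal{S}$. Writing $G=f(W(h_1),\ldots,W(h_n))$, one checks from $\theta_h\omega(t)=\omega(t+h)-\omega(h)$ that $W(h_i)\circ\theta_h=W(\tau_h h_i)$ where $(\tau_h g)(r):=g(r-h)$. Hence $G\circ\theta_h=f(W(\tau_h h_1),\ldots,W(\tau_h h_n))\in\mathcal{S}$, and
\[
\mathcal{D}_r(G\circ\theta_h)=\sum_{i=1}^n\frac{\partial f}{\partial x_i}\bigl(W(\tau_h h_1),\ldots,W(\tau_h h_n)\bigr)\,h_i(r-h).
\]
Since Lebesgue measure on $\mathbb{R}$ is translation invariant, $\|\mathcal{D}(G\circ\theta_h)\|_{L^2(\mathbb{R})}^2$ depends on the coefficient vector $(h_1,\ldots,h_n)$ only through its $L^2(\mathbb{R})$ Gram matrix, which is the same as that of $(\tau_h h_1,\ldots,\tau_h h_n)$. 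Combined with the fact that $\theta_h$ is $\mathbb{P}$-preserving (so $(W(\tau_h h_1),\ldots,W(\tau_h h_n))$ and $(W(h_1),\ldots,W(h_n))$ have the same Gaussian law), we get both
\[
\mathbb{E}|G\circ\theta_h|^2=\mathbb{E}|G|^2 \quad\text{and}\quad \mathbb{E}\|\mathcal{D}(G\circ\theta_h)\|_{L^2(\mathbb{R})}^2=\mathbb{E}\|\mathcal{D}G\|_{L^2(\mathbb{R})}^2,
\]
so $\|G\circ\theta_h\|_{1,2}=\|G\|_{1,2}$ on $\mathcal{S}$.

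Next I would extend by density. Given $G\in\mathcal{D}^{1,2}$, choose $G_n\in\mathcal{S}$ with $G_n\to G$ in $\mathcal{D}^{1,2}$. By the step just proved $\{G_n\circ\theta_h\}$ is Cauchy in $\mathcal{D}^{1,2}$, so it converges to some $\widetilde G\in\mathcal{D}^{1,2}$. On the other hand $G_n\circ\theta_h\to G\circ\theta_h$ in $L^2(\Omega)$ by measure preservation, so $\widetilde G=G\circ\theta_h$ almost surely, whence $G\circ\theta_h\in\mathcal{D}^{1,2}$. Continuity of the $\mathcal{D}^{1,2}$-norm then yields
\[
\|G\circ\theta_h\|_{1,2}=\lim_{n\to\infty}\|G_n\circ\theta_h\|_{1,2}=\lim_{n\to\infty}\|G_n\|_{1,2}=\|G\|_{1,2}.
\]

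The only subtle point, and the one the lemma is really isolating, is the translation-invariance of the $L^2(\mathbb{R})$ norm of the Malliavin derivative; on a half-line or finite interval this would genuinely fail, which is why the statement is restricted to Brownian motion indexed by all of $\mathbb{R}$. Everything else is routine density argument and a change of variables.
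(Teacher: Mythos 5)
Your argument is correct, and it reaches the conclusion via a genuinely different reduction from the paper's. The paper works directly with the Wiener--It\^o chaos expansion of a general $G\in\mathcal{D}^{1,2}$: it observes that the $k$-th chaos kernel of $G\circ\theta_h$ is $f_k(t_1-h,\ldots,t_k-h)$, writes $\mathcal{D}_r G(\theta_h\omega)$ via the same kernels with $r\mapsto r-h$, and concludes by translation invariance of Lebesgue measure on $\mathbb{R}^k$ together with the $L^2$ orthogonality of the chaoses. That avoids any density argument and treats all of $\mathcal{D}^{1,2}$ at once, at the cost of having to write the chaos-level action of $\theta_h$. You instead establish the isometry on the core $\mathcal{S}$ of smooth cylindrical functionals, where $\mathcal{D}_r(G\circ\theta_h)$ is an explicit finite sum and the identity is a one-line change of variables plus the distributional equality $(W(\tau_h h_1),\ldots,W(\tau_h h_n))\overset{d}{=}(W(h_1),\ldots,W(h_n))$, and then extend by a Cauchy-sequence argument identifying the $\mathcal{D}^{1,2}$-limit with $G\circ\theta_h$ via the $L^2$ limit. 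Your density step is carried out correctly, including the point that the $\mathcal{D}^{1,2}$-isometry on $\mathcal{S}$ makes $\{G_n\circ\theta_h\}$ Cauchy and that the $L^2$ limit pins down $\widetilde G$. Both proofs rest on exactly the two facts you highlight at the end --- translation invariance of $L^2(\mathbb{R})$ (which would fail on a half-line) and $\mathbb{P}$-invariance of $\theta_h$ --- so the content is the same; the choice is really between ``chaos expansion, no density'' and ``cylindrical core, with density.''
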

\begin{proof}
First by the measure-preserving property, it is easy to see that
$$\mathbb{E}|G(\theta_{h}\cdot)|^2=\mathbb{E}|G(\cdot)|^2.$$
By Wiener-It\^o's chaos decomposition (c.f. \cite{bernt}),  $G(\cdot)\in \mathcal{D}_{1,2}$ can be written as 
$$G(\omega)=\sum_{k=0}^{\infty}\int_{\mathbb{R}}\cdots\int_{\mathbb{R}}f_k(t_1,\cdots,t_k)dW_{t_1}\cdots dW_{t_k},$$
where $f_k(-)$ is a symmetric element in $\hat L^2(\mathbb{R}^k)$, and 
\begin{eqnarray*}
G(\theta_{h}\omega)&=&\sum_{k=0}^{\infty}\int_{\mathbb{R}}\cdots\int_{\mathbb{R}}f_k(t_1-h,\cdots,t_k-h)dW_{t_1}\cdots dW_{t_k}.
\end{eqnarray*}
The corresponding Malliavin derivatives can be derived through Wiener-It\^o chaos decomposition,
$$\mathcal{D}_r G(\omega)=\sum_{k=0}^{\infty}\int_{\mathbb{R}}\cdots\int_{\mathbb{R}}f_k(t_1,\cdots,t_{k-1},r)dW_{t_1}\cdots dW_{t_{k-1}},$$
and 
\begin{eqnarray*}
\mathcal{D}_r G(\theta_{h}\omega)=\sum_{k=0}^{\infty}\int_{\mathbb{R}}\cdots\int_{\mathbb{R}}f_k(t_1-h,\cdots,t_{k-1}-h, r-h)dW_{t_1}\cdots dW_{t_{k-1}}.
\end{eqnarray*}
Therefore
\begin{eqnarray*}
\mathbb{E}\int_{\mathbb{R}}\|\mathcal{D}_r G(\theta_h \cdot)\|^2dr&=&\sum_{k=0}^{\infty}(k-1)!\int_{\mathbb{R}}\|f_k(t_1-h,\cdots ,t_{k-1}-h, r-h)\|^2_{L^2(\mathbb{R}^{k-1})}dr\\
&=&\sum_{k=0}^{\infty}(k-1)!\int_{\mathbb{R}^k}|f_k(t_1,\cdots,t_{k-1}, r)|^2 dt_1\cdots d t_{k-1}dr\\
&=& \mathbb{E}\int_{\mathbb{R}}\|\mathcal{D}_r G(\cdot)\|^2dr.
\end{eqnarray*}
The claim is asserted.     
\end{proof}

Note that when $(\Omega, \mathcal{F},\mathbb{P})$ is the canonical probability space associated with an $M$-dimensional Brownian motion $\{W^j_t, t\in \mathbb{R}, 1\leq j\leq M\}$, $\mathcal{D}G$ of a random variable $G\in \mathcal{D}^{1,2}$ will be an $M$-dimensional process denoted by $\{\mathcal{D}^j_r G, r\in \mathbb{R},1\leq j\leq M \}$. For example
$$\mathcal{D}^j_r W^k_t=\delta_{k,j}\chi_{(-\infty,t]}(r).$$
Here and throughout the paper $\chi_.(\cdot)$ always represents an indicator function.
Denote the solution of the initial value problem (\ref{feng-zhao21}) by $u(t,s,\omega)x$, $t\geq s$, and
\begin{eqnarray*}
u(t,s,x)=e^{A(t-s)}x+\int_s^te^{A(t-r)}F(r,u(r,s,x))dr+\sum\limits _{k=1}^{M}\int_s^te^{A(t-r)}B_k(r,u(r,s,x)) dW^k_r.
\end{eqnarray*} 
It is easy to see that $u$ satisfies (\ref{16}). Moreover, by Proposition 2.3.22 in \cite {ar} and periodicity of $F$ and $B_k$, similar to the proof cocycle case in Theorem 2.3.26 in \cite{ar}, we have that 
\begin{eqnarray*}
&&u(t+\tau,s+\tau,x)\\
&=&e^{A(t-s)}x+\int_{s+\tau}^{t+\tau}e^{A(t+\tau-r)}F(r,u(r,s+\tau,x))dr
+\sum\limits _{k=1}^{M}\int_{s+\tau}^{t+\tau}e^{A(t+\tau-r)}B_k(r,u(r,s+\tau,x)) dW^k_{r}\\
&=&e^{A(t-s)}x+\int_{s}^{t}e^{A(t-r)}F(r+\tau,u(r+\tau,s+\tau,x))dr\\
&&+\sum\limits _{k=1}^{M}\int_{s}^{t}e^{A(t-r)}B_k(r+\tau,u(r+\tau,s+\tau,x)) d{\widetilde W}^k_{r}\\
&=&e^{A(t-s)}x+\int_{s}^{t}e^{A(t-r)}F(r,u(r+\tau,s+\tau,x))dr+\sum\limits _{k=1}^{M}\int_{s}^{t}e^{A(t-r)}B_k(r,u(r+\tau,s+\tau,x)) d{\widetilde W}^k_{r},
\end{eqnarray*}
where $\widetilde W_r:=(\theta_\tau\omega)(r)=W_{r+\tau}-W_\tau$. On the other hand,
\begin{eqnarray*}
\theta_\tau u(t,s,x)&=&e^{A(t-s)}x+\int_{s}^{t}e^{A(t-r)}F(r,\theta_\tau u(r,s,x))dr+\sum\limits _{k=1}^{M}\int_{s}^{t}e^{A(t-r)}B_k(r,\theta_\tau u(r,s,x)) d{\widetilde W}^k_{r}.
\end{eqnarray*}
By the pathwise uniqueness of the solution of (\ref{feng-zhao21}), we have that
\begin{eqnarray}
u(t+\tau,s+\tau,\omega)=u(t,s,\theta_{\tau}\omega), {\rm \ for \ all}\ t\geq s, t,s\in {\mathbb R}, \ a.s.
\end{eqnarray}
Note that $u(t,s,\omega)$ is also well-defined when $t\leq s$  and satisfies (c.f. \cite{ku2})
\begin{eqnarray}\label{uinverse}
{\Bigg\{\begin{array}{l}du(t)=-[Au(t)+F(t,u(t))+\sum\limits _{k=1}^{M}\nabla_x B_k(t,u(t)) B_k(t,u(t))]dt-\sum\limits _{k=1}^{M}B_k(t,u(t)) dW^k_t, 
\\
u(s)=x\in \mathbb{R}^d.
\end{array}
}
                                    \end{eqnarray}
 This means that the stochastic semi-flow is invertible a.s. Although this is not essential to make our method
 working, it helps to derive the IHSIEs. In the case of SPDEs, this property does not hold. In \cite{fwz2}, applying the unstable manifold theorem 
 (\cite{mo-zh-zh}), we can still deduce the IHSIEs in the infinite dimensional case.

The following substitution theorem for anticipating stochastic differential equations in \cite{nualart2} will play an important role in the development of the connection between the IHSIE and random periodic solutions.

\begin{lem}\label{Stratonovich}
Consider the following stochastic differential equation on $[0,T]$, $T>0$
\begin{eqnarray}\label{stratonovich1}
X_{t,s}&=&X_0+ \sum_{i=1}^M\int_s^t\sigma_i(\hat{s},X_{\hat{s},s}) d W^i_{\hat{s}}+\int_s^t\beta(\hat{s},X_{\hat{s},s})d\hat{s},\ \ t\geq s,
\end{eqnarray}
where $\sigma_i\in C^3(\mathbb{R}^{d+1})$, $0 \leq i\leq M$, and $\beta\in C^3(\mathbb{R}^{d+1})$ have bounded partial derivatives of first order. 
Then for any random vector $X_0$, the process $X=\{\varphi_{t,s}(X_0), t\in[0,T]\}$ satisfies the anticipating SDEs (\ref{stratonovich1}),
where $\{\varphi_{t,s}(x), t\in [0,T]\}$ is the stochastic flow defined by:
\begin{eqnarray}\label{stratonovich2}
\varphi_{t,s}(x)&=&x+ \sum_{i=1}^M\int_s^t\sigma_i(\hat{s},\varphi_{\hat{s},s}(x)) d W^i_{\hat{s}}+\int_s^t\beta(\hat s, \varphi_{\hat{s},s}(x))d\hat{s}, \ \ t\geq s.
\end{eqnarray}
Besides, if $\sigma_i, 1\leq i \leq M$, and $\beta$ are of class $C^4$, and $X_0\in \mathcal{D}^{1,p}(\mathbb{R}^d)$ for some $p>4$, then the process $X$ is the unique solution to (\ref{stratonovich1}) in $L^2([0,T],\mathcal{D}^{1,4}(\mathbb{R}^d))$ and is continuous in $t$ almost surely. 
 \end{lem}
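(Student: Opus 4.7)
I would split the argument into two parts matching the two assertions of the lemma, and rely on the structural difference between the Stratonovich and It\^o/Skorohod integrals.

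For the substitution statement (first part), the key point is that the Stratonovich integral of Definition \ref{Strato}, being defined as a limit in probability of Riemann--Stieltjes sums against the smoothed process $W^\pi$, commutes with multiplication by \emph{any} $\mathcal{F}$-measurable random scalar---a property the It\^o integral dramatically lacks. First I would treat simple initial data $X_0=\sum_{j=1}^N x_j\chi_{A_j}$ with deterministic $x_j\in\mathbb{R}^d$ and pairwise disjoint $A_j\in\mathcal{F}$. Applying (\ref{stratonovich2}) to each $x_j$, multiplying the resulting identity by $\chi_{A_j}$, summing over $j$ and invoking the commutativity just mentioned produces (\ref{stratonovich1}) for $X=\varphi_{\cdot,s}(X_0)$ in this case. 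The passage to general $X_0$ would be a.s.\ approximation $X_0^n\to X_0$, using the continuous dependence of $\varphi_{t,s}(x)$ on $x$ (Kunita-type estimates under $C^3$ coefficients with bounded first derivatives) for the Lebesgue term and uniform-in-$n$ convergence in probability of the smoothed integrands $S^{\pi}$ for the Stratonovich term.

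For uniqueness in $L^2([0,T],\mathcal{D}^{1,4}(\mathbb{R}^d))$, I would convert (\ref{stratonovich1}) into its equivalent Skorohod form via (\ref{Correction2}):
\begin{eqnarray*}
X_{t,s}=X_0+\sum_{i=1}^M\int_s^t\sigma_i(\hat s,X_{\hat s,s})\,\delta W^i_{\hat s}+\int_s^t\Bigl[\beta(\hat s,X_{\hat s,s})+\tfrac12\sum_{i=1}^M\nabla_x\sigma_i(\hat s,X_{\hat s,s})\sigma_i(\hat s,X_{\hat s,s})\Bigr]d\hat s.
\end{eqnarray*}
The $C^4$ assumption on $\sigma_i,\beta$ together with $X_0\in\mathcal{D}^{1,p}$ ($p>4$) is precisely what lets one apply $\mathcal{D}^j_r$ termwise to obtain a linearised Skorohod variation equation for $\mathcal{D}^j_rX_{t,s}$ with bounded coefficients. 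Given two $L^2([0,T],\mathcal{D}^{1,4})$-solutions $X$ and $Y$, the difference $Z_{t,s}=X_{t,s}-Y_{t,s}$ then satisfies a linear Skorohod equation. Bounding its $L^2(\Omega)$-norm via the Skorohod isometry (\ref{zhao22})---whose cross-term requires precisely $\mathcal{D}^{1,4}$ control on $Z$---and bounding $\|\mathcal{D}_\cdot Z_{t,s}\|_{L^4(\Omega)}$ via the variation equation yields a coupled Gronwall inequality in the norm $\|Z_{t,s}\|_{L^2(\Omega)}+\|\mathcal{D}_\cdot Z_{t,s}\|_{L^2(\mathbb{R};L^4(\Omega))}$, forcing $Z\equiv 0$.

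The main obstacle is justifying the passage to the limit in the Stratonovich term for anticipating $X_0$: because $\sigma_i(\hat s,\varphi_{\hat s,s}(X_0^n))$ is not adapted, one cannot invoke standard It\^o-isometry estimates, and the convergence $S^{\pi_n}\to\int_s^t\sigma_i(\hat s,\varphi_{\hat s,s}(X_0))\circ dW^i_{\hat s}$ must be controlled uniformly in both $n$ and the mesh $|\pi_n|$. It is here that the boundedness of the first derivatives of $\sigma_i,\beta$ is essential, giving Lipschitz/moment bounds on $\varphi_{\cdot,s}(X_0^n)$ and on the corresponding Riemann sums that are uniform in the approximating sequence.
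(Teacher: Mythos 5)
The paper does not prove this lemma; it imports it verbatim from Nualart's monograph \cite{nualart2}, and the only original content is the one-line remark below the statement that the time-dependent case reduces to the time-independent one by adjoining the time variable to the state, so there is no internal proof to compare against. Taken as a reconstruction of Nualart's argument, your outline captures the right structure for the uniqueness half (conversion to the Skorohod form via the correction term, a linearised variation equation for $\mathcal{D}X$, and a coupled Gronwall estimate in an $L^2/\mathcal{D}^{1,4}$ norm), but the existence half has a genuine gap precisely where you flag the difficulty. The reduction to simple $X_0=\sum_j x_j\chi_{A_j}$ via locality of the Stratonovich integral is sound, but for general $X_0$ the integral $\int_s^t\sigma_i(\hat s,\varphi_{\hat s,s}(X_0))\circ dW^i_{\hat s}$ is not even defined until one has shown its Riemann sums $S^\pi$ converge in probability; the "uniform-in-$n$ convergence of the smoothed integrands" you invoke is exactly what must be established, and nothing in your sketch supplies the estimate. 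Nualart's actual proof does not pass through step-function approximation of $X_0$ at all: it shows convergence of $S^\pi$ for the composed process $\sigma_i(\hat s,\varphi_{\hat s,s}(X_0))$ directly, by Taylor-expanding the flow $\varphi_{t_{i+1},s}$ around $\varphi_{t_i,s}$ and controlling the remainder through the bounded derivatives of the coefficients and the Malliavin differentiability of the flow; that is where the smoothness hypotheses actually do their work, rather than merely providing Lipschitz-in-$x$ continuity of $\varphi$. You should also note that the lemma as stated is for time-dependent $\sigma_i,\beta$, so if you cite or reproduce the time-independent Nualart theorem you still need the reduction $\tilde X_t=(t+s,X_{t+s,s})$ that the paper records immediately after the statement.
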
 
 
 The equations considered in \cite{nualart2} is time independent case. But time dependent case can be easily deduced to time independent case by considering $\tilde{X}_t=\begin{bmatrix}
t+s\\X_{t+s,s}
\end{bmatrix}$.
  
 Now we consider the general $\tau$-periodic SDEs with multiplicative noise (\ref{feng-zhao21}).  

\begin{thm}\label{T1-2'} \upshape{(Equivalence theorem)}
 Let $F:\mathbb{R} \times \mathbb{R}^d\to \mathbb{R}^d$ and $B_k: \mathbb{R}\times\mathbb{R}^d\to \mathbb{R}^d$ be of class $C^3$, with the Jacobians $\nabla F(t,\cdot)$ and $\nabla B_k(t,\cdot)$ globally bounded, for $1\leq k\leq M$, i.e. $\sup\limits_{t\in \mathbb{R},x\in \mathbb{R}^d} ||\nabla F(t,x)||_{\mathcal L(\mathbb{R}^d)}+\sup\limits_{t\in \mathbb{R},x\in \mathbb{R}^d} ||\nabla B_k(t,x)||_{\mathcal L(\mathbb{R}^d)}<\infty$. Assume $F(t,u)=F(t+\tau,u)$ and $B_k(t,u)=B_k(t+\tau,u)$ for some fixed $\tau>0$. 
 Then a tempered  $Y\in L^2([0,\tau),\mathcal{D}^{1,p})$ for some $p>4$ such that $Y(t+\tau,\omega)=Y(t,\theta_{\tau} \omega) \ {\rm for \ any} \
t\in \mathbb{R}$ $\mathbb{P}$-a.s.  is a random periodic solution of Eqn. (\ref{feng-zhao21}) if and only if $Y$ satisfies Eqn. (\ref{zhao21}).
\end{thm}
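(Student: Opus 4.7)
The statement has the covariance $Y(t+\tau,\omega) = Y(t,\theta_\tau \omega)$ built into both hypotheses, so the whole problem reduces to showing that, under the regularity and tempered assumptions, the IHSIE (\ref{zhao21}) is equivalent to the semiflow identity $Y(t,\omega) = u(t,s,Y(s,\omega),\omega)$ for every $t \geq s$. The plan is to prove each direction separately, using the hyperbolic splitting $I = P^- + P^+$ and the substitution theorem (Lemma \ref{Stratonovich}) as the bridge between pathwise and anticipating representations.

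For the direction $(\ref{zhao21}) \Rightarrow$ random periodic solution, I would first derive (\ref{1.10}) from (\ref{zhao21}) by an elementary semigroup manipulation: split $\int_{-\infty}^t = \int_{-\infty}^s + \int_s^t$ and $\int_t^{+\infty} = \int_s^{+\infty} - \int_s^t$, factor $e^{A(t-s)}$ out of the tails using $e^{A(t-r)} = e^{A(t-s)} e^{A(s-r)}$, and recombine the pieces on $[s,t]$ via $P^- + P^+ = I$. The resulting identity exhibits $Y(t)$ as the output at time $t$ of (\ref{feng-zhao21}) starting from the anticipating initial datum $Y(s)$. Since $Y \in L^2([0,\tau),\mathcal{D}^{1,p})$ with $p > 4$ and the assumed $\tau$-covariance extends membership to every shifted window, $Y(s) \in \mathcal{D}^{1,p}$ for a.e.\ $s$, so Lemma \ref{Stratonovich} applies on each interval $[s,T]$ and its uniqueness clause forces $Y(t) = u(t,s,Y(s,\omega),\omega)$; combined with the assumed covariance this is Definition \ref{feng-zhao1}.

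For the converse, I would start from $u(t,s,Y(s,\omega),\omega) = Y(t,\omega)$ and invoke Lemma \ref{Stratonovich} again to obtain the variation-of-constants form $Y(t) = e^{A(t-s)}Y(s) + \int_s^t e^{A(t-r)} F(r,Y(r))\,dr + \sum_k \int_s^t e^{A(t-r)} B_k(r,Y(r)) \circ dW^k_r$. Projecting by $P^-$ and sending $s \to -\infty$ produces the $P^-$ half of (\ref{zhao21}): hyperbolicity gives $\|e^{Ar}P^-\| \leq Ce^{-\mu r}$ for $r \geq 0$, and temperedness together with the $\tau$-shift covariance (which reduces the long-time growth of $Y$ to that of $Y(s_0,\theta_{n\tau}\omega)$ with $s_0 \in [0,\tau)$) makes the boundary term $e^{A(t-s)} P^- Y(s)$ vanish. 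For $P^+$, I would rewrite the identity at a later time $T \geq t$ to isolate $P^+ Y(t) = e^{A(t-T)} P^+ Y(T) - \int_t^T e^{A(t-r)} P^+ F\,dr - \sum_k \int_t^T e^{A(t-r)} P^+ B_k \circ dW^k_r$, then send $T \to +\infty$ using $\|e^{-Ar}P^+\| \leq Ce^{-\mu r}$ for $r \geq 0$. Adding the two pieces recovers (\ref{zhao21}).

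The main obstacle will be justifying convergence of the improper stochastic integrals and the decay of the boundary terms in a suitable sense. Via the Stratonovich--Skorohod conversion (\ref{Correction2}) and the isometry (\ref{zhao22}), the $L^2$-size of each stochastic tail is governed by $Y$ and $\mathcal{D}Y$ weighted by $\|e^{A(t-r)} P^\pm\|$. The periodicity $Y(\cdot+\tau,\omega) = Y(\cdot,\theta_\tau \omega)$, combined with the $\mathcal{D}^{1,2}$-norm-preservation of Lemma \ref{Preserving}, transports the Malliavin bound on $[0,\tau)$ uniformly onto every $[n\tau,(n+1)\tau)$; paired with the exponential decay of $e^{A\cdot}P^\pm$ on the respective half-lines, this makes the tails $\int_{-\infty}^{-N}$ and $\int_N^{+\infty}$ vanish as $N \to \infty$. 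The deterministic boundary terms are handled analogously, with temperedness of $Y$ absorbing any subexponential growth against the exponential decay of the projected propagator.
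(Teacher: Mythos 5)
Your proposal is correct and follows essentially the same route as the paper: derive~(\ref{1.10}) from~(\ref{zhao21}) by splitting the integrals at $s$ and recombining with $P^-+P^+=I$, then invoke the uniqueness clause of Nualart's substitution theorem (Lemma~\ref{Stratonovich}) for the forward direction; for the converse, project the variation-of-constants identity by $P^\pm$, push the endpoint to $\mp\infty$ (the paper does this along the discrete sequence $s=t\mp l\tau$, exploiting $Y(t\mp l\tau,\omega)=Y(t,\theta_{\mp l\tau}\omega)$), and control the deterministic boundary term by temperedness and the stochastic tails by the Skorohod isometry together with the $\theta$-invariance of the $\mathcal{D}^{1,2}$-norm from Lemma~\ref{Preserving}. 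The only cosmetic difference is in the $P^+$ half: the paper phrases it through invertibility of the flow $u(t-l\tau,t,\cdot,\cdot)$, whereas you invert only the linear propagator $e^{A(T-t)}$ restricted to $E^+$, which yields the identical formula.
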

\begin{proof}
If Eqn. (\ref{zhao21}) has a solution $Y\in \mathcal{D}^{1,p}(L^2(\mathbb{R},\mathbb{R}^d))$ for some $p>4$, then it also satisfies (\ref{1.10}). 
Thanks to Nualart's substitution theorem (Lemma \ref{Stratonovich}), which guarantees the uniqueness of solution to (\ref{feng-zhao21}) with anticipating initial value $Y(s,\omega)$, 
we have
$$
u(t,s, x,\omega)\Big|_{x=Y(s,\omega)}=u(t,s, Y(s,\omega),\omega)=Y(t,\omega).
$$
Conversely, assume Eqn. (\ref{feng-zhao21}) has a random periodic solution which is
 tempered from above. First note for any non-negative integer $l$, we then apply the substitution theorem again to obtain
\begin{eqnarray*}
 Y(t,\omega)&=&u(t\pm l\tau,t,Y(t,\theta _{\mp l\tau}\omega), \theta _{\mp l\tau}\omega)\\
 &=&{\rm e}^{\mp Al\tau}Y(t,\theta _{\mp l\tau}\omega)+\int _t^{t\pm l\tau}{\rm e}^{A(t\pm l\tau-{\hat{s}})}F({\hat{s}},u({\hat{s}},t,Y(t,\theta _{\mp l\tau}\omega),\theta _{\mp l\tau}\omega))d{\hat{s}}\\
 &&+\sum_{k=1}^M\int _t^{t\pm l\tau}{\rm e}^{A(t\pm l\tau-{\hat{s}})}B_k(\hat{s},u({\hat{s}},t,Y(t,\theta _{\mp l\tau}\omega),\theta _{\mp l\tau}\omega))\circ dW^k_{\hat{s}\mp l\tau}.
 \end{eqnarray*}
 Therefore,
 \begin{eqnarray*}
P^-Y(t,\omega)&=&P^-
u(t+l\tau,t,Y(t,\theta _{-l\tau}\omega), \theta _{-l\tau}\omega)\\
&=&{\rm e}^{Al\tau}P^-Y(t,\theta _{-l\tau}\omega)+\int ^t_{t- l\tau}{\rm e}^{A(t-{\hat{s}})}P^-
 F({\hat{s}},Y({\hat{s}},\omega))d{\hat{s}}\\
 &&
 +\sum_{k=1}^M\int ^t_{t- l\tau}{\rm e}^{A(t-{\hat{s}})}P^-
 B_k(\hat{s},Y({\hat{s}},\omega))\circ dW^k_{\hat{s}}\\
&\to&
\int _{-\infty}^t{\rm e}^{A(t-{\hat{s}})}P^-
 F({\hat{s}},Y({\hat{s}},\omega))d{\hat{s}}+ \sum_{k=1}^M\int ^t_{-\infty}{\rm e}^{A(t-{\hat{s}})}P^-
 B_k(\hat{s}, Y({\hat{s}},\omega))\circ dW^k_{\hat{s}}
\end{eqnarray*}
in $L^2$-norm as $l\to+\infty$. The last convergence can be demonstrated by the Skorohod isometry and (\ref{Correction2}). Indeed by the linear growth of $B$ and boundedness of its gradients, for each $k$
\begin{eqnarray*}
&&\mathbb{E}\left[\left|\int ^{t-l\tau}_{-\infty}{\rm e}^{A(t-{\hat{s}})}P^-
 B_k(\hat{s},Y({\hat{s}},\omega)) \delta W^k_{\hat{s}}\right|^2\right]
\\
&\leq & 2C_1\int ^{t-l\tau}_{-\infty}e^{2\mu_{m+1}(t-\hat{s})}(1+|Y(\hat{s})|^2) d\hat{s}
\\&&
+ C\|\nabla B_k\|^2_{\infty}\mathbb{E}\int ^{t-l\tau}_{-\infty}e^{2\mu_{m+1}(t-\hat{s})}\int ^{t-l\tau}_{-\infty}\|\mathcal{D}_rY(\hat{s},\omega)\|^2drd\hat{s}.
\end{eqnarray*} 
Let us consider the second term on the right hand side only, as the first term can be dealt with analogously,
\begin{eqnarray}\label{Jan28}
&&\mathbb{E}\int ^{t-l\tau}_{-\infty}e^{2\mu_{m+1}(t-\hat{s})}\int ^{t-l\tau}_{-\infty}\|\mathcal{D}_rY(\hat{s},\omega)\|^2drd\hat{s}\nonumber\\
&\leq &e^{2\mu_{m+1}l\tau}\int ^{t}_{-\infty}e^{2\mu_{m+1}(t-\hat{s})}\mathbb{E}\int ^{\infty}_{-\infty}\|\mathcal{D}_rY(\hat{s},\omega)\|^2drd\hat{s}\nonumber\\
&\leq & e^{2\mu_{m+1}l\tau} \sum_{i=-1}^{\infty}e^{2\mu_{m+1}i\tau}\int_{0}^{\tau}
\mathbb{E}\int ^{\infty}_{-\infty}\|\mathcal{D}_rY(\hat{s},\omega)\|^2drd\hat{s},
\end{eqnarray} 
where we use the periodicity $Y(\hat{s},\omega)$ and the norm preserving property  Lemma \ref{Preserving} about $\mathbb{E}\int ^{+\infty}_{-\infty}\|\mathcal{D}_rY(\hat{s},\omega)\|^2dr$. Moreover, it is easy to see that (\ref {Jan28}) tends to $0$ when $l\to \infty$. 
Analogously, as $u$ is invertible,
\begin{eqnarray*}
P^+Y(t,\omega)&=&P^+
u(t-l\tau,t,Y(t,\theta _{l\tau}\omega), \theta _{l\tau}\omega)\\
&=&{\rm e}^{-Al\tau}P^-Y(t,\theta _{l\tau}\omega)-\int _t^{t+l\tau}{\rm e}^{A(t-{\hat{s}})}P^+
 F({\hat{s}},Y({\hat{s}},\omega))d{\hat{s}}\\
 &&-\sum_{k=1}^M\int _t^{t+l\tau}{\rm e}^{A(t-{\hat{s}})}P^+
 B_k(\hat{s},Y({\hat{s}},\omega))\circ dW^k_{\hat{s}}\\
&\to&
-\int^{\infty}_t{\rm e}^{A(t-{\hat{s}})}P^+
 F({\hat{s}},Y({\hat{s}},\omega))d{\hat{s}}- \sum_{k=1}^M\int _t^{\infty}{\rm e}^{A(t-{\hat{s}})}P^-
 B_k(\hat{s},Y({\hat{s}},\omega))\circ dW^k_{\hat{s}}
\end{eqnarray*}
in $L^2$-norm as $l\to +\infty$.  
\end{proof}

In the general multiplicative noise case, it remains open to solve Eqn. (\ref{zhao21}). 
We will solve the linear noise case in the next two sections.

\section{Linear noise: the exponential dichotomy and IHRIEs}

Consider the following $\tau$-periodic semilinear SDE of Stratonovich type with multiplicative linear noise,
\begin{eqnarray}\label{origT1}
\bigg\{\begin{array}{l}du(t)=Au(t)\,dt+F(t,u(t))\, dt+\sum\limits_{k=1}^MB_k u(t)\circ  dW^k_t,  \ \ \ \ \ \ t\geq s \\
u(s)=x\in \mathbb{R}^d,
                                     \end{array}
\end{eqnarray}
where $A,\{B_k,\ 1\leq k\leq M\}$ are in $\mathcal{L}(\mathbb{R}^d)$, $W_t:=(W^1_t,W^2_t, \cdots,W^M_t)$, $t\in \mathbb{R}$, is an $M$-dimensional Brownian motion under the filtered Wiener space $(\Omega,\mathcal{F},(\mathcal{F}^t)_{t\in \mathbb{R}},  \mathbb{P})$. 
In addition, we assume that
\vskip3pt

{\bf{ Condition (C).}} The matrices $A$, $A^*$, $B_k$, and $B^*_k$  are mutually commutative.
\vskip3pt


Now define a random evolution operator $\Phi : \mathbb{R}^+\times \Omega\rightarrow \mathcal{L}(\mathbb{R}^d)$ by
 \begin{eqnarray}\label{phiT1}
\bigg\{\begin{array}{l}d\Phi(t)=A\Phi(t)\,dt+\sum\limits_{k=1}^MB_k \Phi(t)\circ  d W^k_t,\ \ t\geq 0   \\
\Phi(0)=I\in \mathcal{L}(\mathbb{R}^d),
                                     \end{array}
\end{eqnarray}
which is a cocycle over the metric dynamical system $(\Omega,{\cal F}, P, (\theta_t)_{t\in \mathbb {R}})$ (\cite{ar},\cite{bismut},\cite{elworthy},\cite{ku2},\cite{meyer}).
Due to the commutative property of $A$ and $B_k$, $\Phi$ can be written in the explicit form as 
$$\Phi(t,\omega)=\exp\big\{At+\textstyle\sum\nolimits_{k=1}^MB_k W^k_t\big\}.$$
Recall that the solution of (\ref{origT1}) via (\ref{phiT1}) can be written as (c.f. \cite{mo-zh-zh})
\begin{eqnarray}\label{mild}
u(t,s,x,\omega)=\Phi(t-s, \theta_s \omega)x+\int_s^t\Phi(t-{\hat{s}}, \theta_{\hat{s}}\omega)F(\hat{s},u({\hat{s}},s,x,\omega))\, d{\hat{s}},  \ \  \ t\geq s. 
\end{eqnarray}

\begin{rmk} Though being defined only on $\mathbb{R}^+$ in the above, $\Phi$ can be extended to $\mathbb{R}^-$ in the finite dimentional case via the relation $\Phi(t,\omega)=\Phi(-t,\theta_t\omega)^{-1}$ when $t\leq 0$ as it is measurably invertible. Here $\Phi(t,\omega)^{-1}$ is uniquely defined and satisfies (\cite{ku2})
$$d\Phi(t)^{-1}=-A\Phi(t)^{-1}\,dt+\sum\limits_{k=1}^MB^2_k \Phi(t)^{-1}\,dt-\sum\limits_{k=1}^MB_k \Phi(t)^{-1}\circ  d W^k_t,\ \  t\geq 0 . $$  
\end{rmk}

Note
it is not hard to check that $\Phi$ is a perfect two-sided linear cocycle, so it satisfies the multiplicative ergodic theorem (MET) in Euclidean space (\cite{ar}). The proof is postponed to the Appendix. 
%

\begin{lem}\label{LEMMA1C3}\upshape{(Exponential dichotomy)}
 Suppose that $\frac{A+A^{\ast}}{2}$ has only nonzero eigenvalues with the order $\mu_p<\mu_{p-1}<\dots < \mu_{m+1}<0<\mu_{m}<\dots<\mu_1$, $p\leq d$, and the corresponding eigenspaces $E_p,\cdots,E_1$ with multiplicity $d_i:=\mbox{dim}\ E_i$. Here $\sum_{i=1}^{p}d_i=d$. Then
\begin{enumerate}
\item There exists a non-random splitting
$$\mathbb{R}^d=E_p\oplus E_{p-1}\oplus \dots\oplus E_{m+1}\dots\oplus E_{1}\ \ \ \mathbb{P}-\mbox{a.s.},$$
and 
$$\mu_i=\lim_{t\to \pm \infty}\frac{1}{t}\log|\Phi(t,\omega)x|,\ \mbox{for} \ \ x\in E_i\setminus \{0\},$$
is the Lyapunov exponent of $\Phi$, with the corresponding multiplicity $d_i$. Moreover, $\mathbb{R}^d$ can be decomposed as 
$$\mathbb{R}^d=E^{-}\oplus E^{+},$$
where $E^-=E_p\oplus E_{p-1}\oplus \dots\oplus E_{m+1}$ is generated by the eigenvectors with negative eigenvalues, while $E^+=E_m\oplus E_{m-1}\oplus \dots\oplus E_{1}$ is generated by the eigenvectors with positive eigenvalues.
\item Let $P^{\pm}: \mathbb{R}^d\to E^{\pm}$ be the projection onto $ E^{\pm}$ along $E^{\mp}$. Then
$$\Phi(t,\theta_{s}\omega)P^{\pm}=P^{\pm}\Phi(t,\theta_{s}\omega)\ \ \ \mathbb{P}-\mbox{a.s.},  $$
with exponential dichotomy on an invariant set $\hat{\Omega}$ of full measure, 
 \begin{eqnarray}\label{PhiBound}
\Bigg\{\begin{array}{l} \|\Phi(t,\theta_{s}\omega)P^+\|\leq C(\theta_{s}\omega)  {\rm e}^{\frac{1}{2}\mu_{m}t}\leq C_{\Lambda}(\omega)  {\rm e}^{\frac{1}{2}\mu_{m}t}{\rm e}^{\Lambda|s|},\ \ \  \ t\leq 0,  \\
\|\Phi(t,\theta_{s}\omega)P^-\|\leq C(\theta_{s}\omega)  {\rm e}^{\frac{1}{2}\mu_{m+1}t}\leq  C_{\Lambda}(\omega) {\rm e}^{\frac{1}{2}\mu_{m+1}t}{\rm e}^{\Lambda|s|},\ \ \ \ t\geq 0,
                                     \end{array}
\end{eqnarray} 
 for any $s\in \mathbb{R}$, where $\|\cdot\|$ denotes the norm on $\mathcal{L}(\mathbb{R}^d)$, and $C(\omega)$ is a tempered random variable from above, 
$\Lambda$ is an arbitrary positive number and $C_{\Lambda}(\omega)$ a positive random variable depending on $\Lambda$.
\end{enumerate} 
 \end{lem}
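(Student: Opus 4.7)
Condition (C) is the workhorse. Since $A,A^*$ and the $B_k,B_k^*$ commute pairwise, the Stratonovich equation (\ref{phiT1}) admits the closed-form solution $\Phi(t,\omega)=\exp\{At+\sum_{k=1}^M B_k W_t^k(\omega)\}$ for all $t\in\mathbb{R}$, and the same commutativity yields
\[
\Phi^*(t,\omega)\Phi(t,\omega)=\exp\Bigl\{(A+A^*)t+\sum_{k=1}^M (B_k+B_k^*)W_t^k(\omega)\Bigr\}.
\]
The real symmetric commuting family $S:=\tfrac{A+A^*}{2}$ and $S_k:=\tfrac{B_k+B_k^*}{2}$ therefore admits a common real orthonormal eigenbasis $\{e_\alpha\}_{\alpha=1}^d$ with $S e_\alpha=\mu(\alpha)e_\alpha$, $\mu(\alpha)\in\{\mu_1,\dots,\mu_p\}$, and $S_k e_\alpha=\sigma_{k,\alpha}e_\alpha$. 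Setting $E_i:=\mathrm{span}\{e_\alpha:\mu(\alpha)=\mu_i\}$ delivers the deterministic splitting of part (i), with $\dim E_i=d_i$ and $\sum_i d_i=d$, and $E^\pm$ become the standard positive/negative spectral parts of $S$.

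For the Lyapunov exponent statement, for each $x=\sum_\alpha c_\alpha e_\alpha\in E_i\setminus\{0\}$ the display above gives
\[
|\Phi(t,\omega)x|^2=\sum_\alpha |c_\alpha|^2 \exp\Bigl\{2\mu_i t+2\sum_{k=1}^M \sigma_{k,\alpha}W_t^k(\omega)\Bigr\}.
\]
Since $W_t^k/t\to 0$ a.s. as $t\to\pm\infty$ (strong law for Brownian motion), I would conclude $t^{-1}\log|\Phi(t,\omega)x|\to\mu_i$ on a $(\theta_t)$-invariant $\hat\Omega$ of full measure, establishing the Lyapunov exponent with multiplicity $d_i$.

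For part (ii), let $P^\pm$ be the orthogonal projections onto $E^\pm$; as spectral projections of $S$, and since $S$ commutes with $A$ and every $B_k$ by Condition (C), $E^\pm$ are invariant under $A$ and $B_k$, and hence under the exponential $\Phi(t,\theta_s\omega)$, giving $\Phi(t,\theta_s\omega)P^\pm=P^\pm\Phi(t,\theta_s\omega)$. For the dichotomy estimate, fix $t\ge 0$ and $x\in E^-$ with $|x|=1$; every $\mu(\alpha)$ in the expansion of $x$ then satisfies $\mu(\alpha)\le\mu_{m+1}<0$, so
\[
|\Phi(t,\theta_s\omega)x|^2\le e^{2\mu_{m+1}t}\max_\alpha \exp\bigl\{2\textstyle\sum_k \sigma_{k,\alpha} W_t^k(\theta_s\omega)\bigr\}.
\]
Defining $C(\omega)^2:=\sup_{t\in\mathbb{R}}e^{-|\mu_{m+1}||t|}\max_\alpha \exp\{2\sum_k \sigma_{k,\alpha} W_t^k(\omega)\}$, the a.s. finiteness and tempered growth of $C$ both follow from LIL (the Wiener exponent is $o(|t|)$), and substitution delivers the first line of (\ref{PhiBound}) with the noise exponent $\varepsilon=|\mu_{m+1}|$ absorbed against the deterministic decay $e^{2\mu_{m+1}t}$. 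The case $t\le 0$, $x\in E^+$ is symmetric via $\mu(\alpha)\ge\mu_m>0$. The second form of the bound is then the standard consequence of temperedness: $C(\theta_s\omega)\le C_\Lambda(\omega)e^{\Lambda|s|}$ for arbitrary $\Lambda>0$.

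The main obstacle I anticipate is not the algebra — which is forced once the explicit formula for $\Phi$ is in hand — but the careful bookkeeping required to produce a single $(\theta_s)$-invariant, full-measure $\hat\Omega$ on which all estimates (Lyapunov limits, finite sup defining $C(\omega)$, tempered growth of $C$) hold simultaneously, and the verification that the sup is genuinely tempered rather than merely a.s.\ finite. Both are routine RDS manipulations leveraging the sublinear growth $W_t=o(|t|)$, together with the subadditivity $|u-s|\ge|u|-|s|$ inside the defining sup of $C$.
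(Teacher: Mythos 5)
Your proposal is correct in its overall strategy and is actually more elementary than the paper's Appendix proof: whereas the paper first invokes Oseledets' multiplicative ergodic theorem to obtain a random splitting and then argues via the forward and backward Lyapunov filtrations that the Oseledets spaces reduce to the fixed eigenspaces of $(A+A^*)/2$, you bypass MET entirely by diagonalizing the commuting symmetric family $S,S_1,\dots,S_M$ in a common orthonormal basis and reading off the exponents directly from the explicit expansion of $|\Phi(t,\omega)x|^2$, using only $W_t^k/t\to 0$ a.s. This is a clean and legitimate shortcut that Condition (C) makes available, and it delivers the non-random splitting and the Lyapunov exponents at once. Your part (ii) is likewise in the same spirit as the paper's, which defines $C(\omega)$ as the supremum over $t$ of a damped exponential of $|W_t^k|$ and checks temperedness via the law of the iterated logarithm (more precisely via the pathwise bound $|W_t^k|\le C_{\delta,\omega}+|t|^\delta$ for some $\tfrac12<\delta<1$).

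However, there is a concrete slip in your choice of damping rate: you set $C(\omega)^2:=\sup_{t\in\mathbb{R}}e^{-|\mu_{m+1}||t|}\max_\alpha\exp\{2\sum_k\sigma_{k,\alpha}W_t^k(\omega)\}$. For the $P^-$ bound ($t\ge0$) this absorbs exactly half of the deterministic decay $e^{2\mu_{m+1}t}$ and delivers $C(\theta_s\omega)e^{\frac12\mu_{m+1}t}$, as you note. But for the $P^+$ bound ($t\le0$) the same $C$ yields $|\Phi(t,\theta_s\omega)x|^2\le C(\theta_s\omega)^2\,e^{(2\mu_m-|\mu_{m+1}|)t}$, and the required inequality $e^{(2\mu_m-|\mu_{m+1}|)t}\le e^{\mu_m t}$ for $t\le0$ is equivalent to $\mu_m\ge|\mu_{m+1}|$, which is not assumed. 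The fix is the one the paper adopts: use the damping rate $\mu:=\min\{-\mu_{m+1},\mu_m\}$ (equivalently $\min\{|\mu_{m+1}|,\mu_m\}$) rather than $|\mu_{m+1}|$ in the definition of $C(\omega)$, so that the two absorption inequalities $\mu\le-\mu_{m+1}$ and $\mu\le\mu_m$ hold simultaneously. With that correction, and with the temperedness verification fleshed out along the $|W_t^k|\lesssim|t|^\delta$ lines (as in the Appendix), your argument is complete.
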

 
Some elementary but useful results can be derived from (\ref{PhiBound}). Their proof is postponed to an Appendix.
\begin{cor}\label{LEMMA2C3}
For any $t\geq 0$, and $\hat{s}\in \mathbb{R}$,  we have 
$$ \mathbb{E} \|P^{-}-\Phi(t, \theta_{\hat{s}}\cdot)P^{-}\|^2=\mathbb{E} \|P^{-}-\Phi(t, \cdot)P^{-}\|^2\leq C(|t|+1){\rm e}^{2\|A\||t|+2M\|B\|^2|t|}|t|,$$
where $C$ is a generic constant that may depend on $M$, $A$, $B_k$, $\mu_{m+1}$, $\mu_m$, $F$, and $\tau$, 
and for any $t\leq 0$, and $\hat{s}\in \mathbb{R}$, we have
$$ \mathbb{E} \|P^{+}-\Phi(t, \theta_{\hat{s}}\cdot)P^{+}\|^2=\mathbb{E} \|P^{+}-\Phi(t, \cdot)P^{+}\|^2\leq C(|t|+1){\rm e}^{2\|A\||t|+2M\|B\||t|}|t|.$$
Moreover, we have 
$$\mathbb{E} \|\Phi(t, \theta_{\hat{s}}\cdot)P^{\pm}\|^2=\mathbb{E} \|\Phi(t, \cdot)P^{\pm}\|^2\leq C{\rm e}^{2\|A\||t|+2M\|B\|^2|t|}.$$
\end{cor}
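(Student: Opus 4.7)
The plan is first to dispose of the equalities and the simpler $L^2$ bound, then estimate each of the two "difference" quantities by an It\^o argument in the forward variable. For the equalities, since $\theta_{\hat s}$ is $\mathbb{P}$-preserving, any measurable matrix-valued $X$ satisfies $\mathbb{E}\|X(\theta_{\hat s}\cdot)\|^2 = \mathbb{E}\|X(\cdot)\|^2$, reducing everything to the case $\hat s = 0$. For the bound on $\mathbb{E}\|\Phi(t,\cdot)P^\pm\|^2$, I would exploit Condition~(C) through the closed form $\Phi(t,\omega)=\exp\bigl(At+\sum_{k=1}^M B_k W^k_t\bigr)$ noted in the paper. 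The elementary inequality $\|e^M\|\leq e^{\|M\|}$ gives $\|\Phi(t,\omega)\|^2\leq \exp\bigl(2\|A\||t|+2\sum_k\|B_k\||W^k_t|\bigr)$; since the $W^k$ are independent and a standard Gaussian moment estimate gives $\mathbb{E}\exp(2\|B_k\||W^k_t|)\leq 2\exp(2\|B_k\|^2|t|)$, one obtains $\mathbb{E}\|\Phi(t,\cdot)\|^2\leq C e^{2\|A\||t|+2M\|B\|^2|t|}$, and boundedness of $\|P^\pm\|$ yields the third inequality of the corollary.

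For $\mathbb{E}\|P^- - \Phi(t,\cdot)P^-\|^2$ with $t\geq 0$, I would start from (\ref{phiT1}), commute $P^-$ through $\Phi$ using Lemma~\ref{LEMMA1C3}, and convert the Stratonovich SDE for $\Phi(\cdot)P^-$ into its It\^o form
\begin{equation*}
\Phi(t)P^- - P^- = \int_0^t\Bigl(A+\tfrac{1}{2}\textstyle\sum_k B_k^2\Bigr)\Phi(s)P^-\,ds + \sum_{k=1}^M\int_0^t B_k\Phi(s)P^-\,dW^k_s.
\end{equation*}
Applying $(a+b)^2\leq 2a^2+2b^2$, Cauchy--Schwarz on the drift integral, and the It\^o isometry on each stochastic integral gives
$\mathbb{E}\|\Phi(t)P^- - P^-\|^2 \leq C(t+1)\int_0^t \mathbb{E}\|\Phi(s)P^-\|^2\,ds$, into which the exponential bound from the previous step plugs directly to produce $C(t+1)|t|e^{2\|A\||t|+2M\|B\|^2|t|}$, as required.

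For $\mathbb{E}\|P^+ - \Phi(t,\cdot)P^+\|^2$ with $t\leq 0$, the cleanest route is to set $u:=-t\geq 0$ and invoke the cocycle identity $\Phi(t,\omega)=\Phi(u,\theta_{-u}\omega)^{-1}$ together with measure preservation to rewrite the expectation as $\mathbb{E}\|\Phi(u,\cdot)^{-1}P^+-P^+\|^2$. Since $\Phi(u)^{-1}=\exp\bigl(-Au-\sum_k B_k W^k_u\bigr)$ satisfies an SDE of the same shape as $\Phi$ (with $A,B_k$ replaced by their negatives, yielding the analogous It\^o drift correction $+\tfrac12\sum_k B_k^2\Phi^{-1}\,du$), the It\^o-isometry argument of the previous paragraph applies verbatim and yields the stated bound. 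The main care point in the whole proof is exactly this backwards-time piece: one has to avoid trying to define an It\^o integral over a "negative" interval $[t,0]$ against $\mathcal{F}^s$-adapted integrands when $s<0$, where $\Phi(s)$ need not be $\mathcal{F}^s$-measurable; the inverse-cocycle reduction is precisely the device that sidesteps this measurability subtlety. Everything else follows from the single exponential input on $\mathbb{E}\|\Phi(s)\|^2$ and routine $L^2$ estimates, and the absolute values $|t|$ and the factor $(|t|+1)$ come out automatically from the lengths of the integration intervals.
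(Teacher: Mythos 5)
Your proof is correct and follows essentially the same route as the paper: reduce to $\hat{s}=0$ by measure preservation, bound $\mathbb{E}\|\Phi P^{\pm}\|^2$ via the closed form $\Phi(t)=\exp(At+\sum_kB_kW^k_t)$ and Gaussian moment estimates, and control the difference $\mathbb{E}\|P^--\Phi(t)P^-\|^2$ by converting the Stratonovich SDE for $\Phi P^-$ into It\^o form, then using Cauchy--Schwarz on the drift and the It\^o isometry on the martingale part. The only variation is your treatment of the $P^+$, $t\leq 0$ case via the inverse-cocycle identity $\Phi(t,\omega)=\Phi(-t,\theta_t\omega)^{-1}$: the paper just says the $P^+$ estimate is ``analogous'' (which, taken literally, would require a backward It\^o integral since $\Phi(\hat{h},\theta_{\hat{s}}\omega)$ is adapted only to the reverse filtration for $\hat h<0$), and your reduction to a forward-time SDE for $\Phi^{-1}$ is a clean, valid alternative that arrives at the same bound and resolves the adaptedness subtlety explicitly.
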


We will look for a $\mathcal{B}(\mathbb{R})\otimes\mathcal{F}$-measurable map $Y: \mathbb{R}\times \Omega\rightarrow \mathbb{R}^d$ which satisfies the following coupled forward-backward IHRIE, 
\begin{eqnarray}\label{VT1}
Y(t, \omega)&=&\int_{-\infty}^t\Phi(t-{\hat{s}},\theta_{\hat{s}} \omega)P^-F({\hat{s}},Y({\hat{s}},\omega))d{\hat{s}} \nonumber\\
& &-\int^{+\infty}_t\Phi(t-{\hat{s}},\theta_{\hat{s}} \omega)P^+F({\hat{s}},Y({\hat{s}},\omega))d{\hat{s}}, 
\end{eqnarray}
for all $\omega \in \Omega$, $t\in \mathbb{R}$. 
For any $N\in \mathbb{N}$, set the truncation of $\Phi(t,\theta_{\hat{s}}\omega)P^{\pm}$ by $N$: 
\begin{eqnarray}\label{PHI-B}
 \Phi^N(t,\theta_{\hat{s}}\omega)P^{-}:&=&\Phi(t,\theta_{\hat{s}}\omega)P^{-}\min\left\{1,  \frac{N{\rm e}^{\frac{1}{2}\mu_{m+1}t}{\rm e}^{\Lambda|\hat{s}|}}{\| \Phi(t,\theta_{\hat{s}}\omega)P^{-}\|}\right\}, {\rm \ when}\ t\geq 0,
  \\
  \label{PHI+B}
 \Phi^N(t,\theta_{\hat{s}}\omega)P^{+}:&=&\Phi(t,\theta_{\hat{s}}\omega)P^{+}\min\left\{1,  \frac{N{\rm e}^{\frac{1}{2}\mu_{m}t}{\rm e}^{\Lambda|\hat{s}|}}{\| \Phi(t,\theta_{\hat{s}}\omega)P^{+}\|}\right\}, {\rm when}\  t\leq 0.
  \end{eqnarray}
  We first consider a sequence of $\mathcal{B}(\mathbb{R})\otimes\mathcal{F}$-measurable maps $\{Y^N\}_{N\geq 1}$
defined by solutions of 
\begin{eqnarray}\label{VNT1}
Y^N(t, \omega)&=&\int_{-\infty}^t\Phi^N(t-{\hat{s}},\theta_{\hat{s}} \omega)P^-F({\hat{s}},Y^N({\hat{s}},\omega))d{\hat{s}} \nonumber\\
& &-\int^{+\infty}_t\Phi^N(t-{\hat{s}},\theta_{\hat{s}} \omega)P^+F({\hat{s}},Y^N({\hat{s}},\omega))d{\hat{s}},
\end{eqnarray}
for all $\omega \in \Omega$, $t\in \mathbb{R}$. We will develop tools to solve Eqn. (\ref{VT1}) via Eqn. (\ref{VNT1}).
Denote $\mu:=\min \{-\mu_{m+1},\mu_m\}$. Set
\begin{equation}\label{omegan}
\Omega_N:=\left\{\omega:\  \sup_{s\in\mathbb{R}}\max\left\{\sup_{t\geq 0}\|\Phi(t,\theta_s\omega)P^{-}\|{\rm e}^{-\frac{1}{2}\mu|t|-\Lambda|s|},\ \sup_{t\leq 0}\|\Phi(t,\theta_s\omega)P^{+}\|{\rm e}^{-\frac{1}{2}\mu|t|-\Lambda|s|}\right\}\leq N\right\}.
\end{equation}
Note that for $\omega\in \Omega_N$, $\Phi=\Phi^N$, and consequently Eqn. (\ref{VT1}) coincides with Eqn. (\ref{VNT1}). Moreover Lemma \ref{LEMMA1C3} suggests that $\Omega_N\to \Omega$ as $N\to \infty$. Therefore $Y^N$ is a local solution of Eqn. (\ref{VT1}).

Note that Stratonovich integral is defined in the sense of convergence in probability. In the multiplicative linear noise case, with Theorem \ref{Stratonovich} in hand, we are able to identify the random periodic solution of  (\ref{origT1}) with the solution of IHRIE (\ref{VT1}) without assuming $Y$ being Malliavin differentiable.

\begin{thm}\label{feng-zhao60}
\label{T1-2} 
 Let $F:\mathbb{R} \times \mathbb{R}^d\to \mathbb{R}^d$ be of class $C^3$, globally bounded and the Jacobians $\nabla F(t,\cdot)$  globally bounded. Assume $F(t,u)=F(t+\tau,u)$ for some fixed $\tau>0$. 
  Then a tempered  $Y$ such that $Y(t+\tau,\omega)=Y(t,\theta_{\tau} \omega) \ {\rm for \ any} \
t\in \mathbb{R}$ $\mathbb{P}$-a.s.  is a random periodic solution if and only if $Y$ satisfies (\ref{VT1}).
\end{thm}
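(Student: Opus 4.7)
The plan is to reproduce the logic of Theorem \ref{T1-2'}, but to exploit the fact that in the linear-noise setting the stochastic contribution is entirely absorbed into the random evolution operator $\Phi$, so the mild formula (\ref{mild}) is a pathwise (ordinary Lebesgue) integral equation. This removes the Malliavin-derivative/Skorohod-isometry machinery that was required there, and in particular uniqueness of the substituted anticipating equation reduces to classical Gronwall rather than to Lemma \ref{Stratonovich}.

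For the necessity direction, starting from $u(t,t-l\tau,Y(t-l\tau,\omega),\omega)=Y(t,\omega)$ I would substitute (\ref{mild}) with $s=t-l\tau$ to get
\[
Y(t,\omega)=\Phi(l\tau,\theta_{t-l\tau}\omega)Y(t-l\tau,\omega)+\int_{t-l\tau}^{t}\Phi(t-\hat{s},\theta_{\hat{s}}\omega)F(\hat{s},Y(\hat{s},\omega))\,d\hat{s}.
\]
Applying $P^-$ (which commutes with $\Phi$ by Condition (C) and Lemma \ref{LEMMA1C3}) and using the random periodicity $Y(t-l\tau,\omega)=Y(t,\theta_{-l\tau}\omega)$, I would let $l\to\infty$. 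The exponential dichotomy bound $\|\Phi(l\tau,\theta_{t-l\tau}\omega)P^-\|\leq C_\Lambda(\omega)e^{\frac{1}{2}\mu_{m+1}l\tau+\Lambda|t-l\tau|}$, combined with the temperedness of $Y$ and the freedom to pick $\Lambda<|\mu_{m+1}|/2$, forces the boundary term to vanish, while boundedness of $F$ and the exponential decay of $\|\Phi P^-\|$ make the integral converge absolutely to $\int_{-\infty}^{t}\Phi(t-\hat{s},\theta_{\hat{s}}\omega)P^-F(\hat{s},Y(\hat{s},\omega))\,d\hat{s}$. For the $P^+$-component I would invert the semiflow (available in finite dimensions via (\ref{uinverse})) to write
\[
Y(t,\omega)=\Phi(-l\tau,\theta_{t+l\tau}\omega)Y(t,\theta_{l\tau}\omega)-\int_{t}^{t+l\tau}\Phi(t-\hat{s},\theta_{\hat{s}}\omega)F(\hat{s},Y(\hat{s},\omega))\,d\hat{s},
\]
apply $P^+$, and send $l\to\infty$ using the $t\leq 0$ bound of (\ref{PhiBound}). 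Adding the two projections recovers (\ref{VT1}).

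For sufficiency, assume $Y$ satisfies (\ref{VT1}). Splitting both improper integrals at an arbitrary $s\leq t$ and using the cocycle identity $\Phi(t-\hat{s},\theta_{\hat{s}}\omega)=\Phi(t-s,\theta_{s}\omega)\Phi(s-\hat{s},\theta_{\hat{s}}\omega)$, I would factor $\Phi(t-s,\theta_s\omega)$ out of the tail pieces, recognise the resulting bracket as $Y(s,\omega)$ by (\ref{VT1}) at time $s$, and combine the remaining finite-horizon $P^-$ and $P^+$ integrals via $P^-+P^+=I$. The outcome is the pathwise mild formula
\[
Y(t,\omega)=\Phi(t-s,\theta_{s}\omega)Y(s,\omega)+\int_{s}^{t}\Phi(t-\hat{s},\theta_{\hat{s}}\omega)F(\hat{s},Y(\hat{s},\omega))\,d\hat{s}.
\]
Since $F$ is Lipschitz this $\omega$-wise Volterra equation has a unique continuous solution by Gronwall, and that solution coincides with $u(t,s,Y(s,\omega),\omega)$; hence $Y(t,\omega)=u(t,s,Y(s,\omega),\omega)$ for every $t\geq s$, which together with the assumed $Y(t+\tau,\omega)=Y(t,\theta_{\tau}\omega)$ is precisely Definition \ref{feng-zhao1}.

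The main obstacle I expect is the boundary-term control in the necessity direction, where three competing growth rates must be balanced: the exponential decay $e^{\frac{1}{2}\mu_{m+1}l\tau}$ (respectively $e^{-\frac{1}{2}\mu_{m}l\tau}$), the subexponential $e^{\Lambda|t\mp l\tau|}$ tail from the MET bound (\ref{PhiBound}), and the tempered growth of $Y(t,\theta_{\mp l\tau}\omega)$ in $l$. Choosing $\Lambda$ small relative to $\min(|\mu_{m+1}|,\mu_m)$ and invoking the temperedness hypothesis closes the argument. The $L^2$-Skorohod complications that drove the proof of Theorem \ref{T1-2'} do not arise here, because once the noise is absorbed into $\Phi$ the integrals in (\ref{mild}) are ordinary pathwise Lebesgue integrals.
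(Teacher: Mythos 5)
Your proposal is correct and follows essentially the same route as the paper's proof: reduce to the pathwise mild formula (\ref{mild}), use $\omega$-wise Gronwall for the uniqueness step in the ``if'' direction, and for the ``only if'' direction apply $P^{\pm}$ to the mild formula over $[t-l\tau,t]$ (resp.\ $[t,t+l\tau]$ via the inverse semiflow), then send $l\to\infty$ using the exponential dichotomy (\ref{PhiBound}), the temperedness of $Y$ and $C(\omega)$, and boundedness of $F$, with the $\Lambda$-tail traded off against the Oseledets decay rates.

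The one substantive deviation is worth flagging: you argue that Nualart's substitution theorem (Lemma \ref{Stratonovich}) can be dropped altogether in the linear-noise case, whereas the paper still invokes it in both directions. Your reasoning is sound. Once the noise is absorbed into $\Phi$, formula (\ref{mild}) is an a.s.\ pathwise identity in $(t,s,x)$, so substituting $x=Y(s,\omega)$ is an $\omega$-by-$\omega$ evaluation and $u(t,s,x,\omega)\big|_{x=Y(s,\omega)}$ is automatically a solution of the resulting Volterra equation; the Gronwall uniqueness you derive then closes both implications without any Malliavin-calculus input. The paper's appeals to Lemma \ref{Stratonovich} here appear to be a carryover from Theorem \ref{T1-2'} (where the Stratonovich integral genuinely depends on the unknown $Y$ and substitution is a nontrivial anticipating-calculus fact), and dropping them buys a cleaner, more self-contained proof of this theorem with the same estimates. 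The remaining details you sketch — the cocycle factorisation $\Phi(t-\hat s,\theta_{\hat s}\omega)=\Phi(t-s,\theta_s\omega)\Phi(s-\hat s,\theta_{\hat s}\omega)$ with $P^{-}+P^{+}=I$ recovering the mild form from (\ref{VT1}), and choosing $\Lambda$ small relative to $\min(|\mu_{m+1}|,\mu_m)$ to kill the boundary term — are exactly right.
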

\begin{proof}
If Eqn. (\ref{VT1}) has a solution $Y(\cdot,\omega)$,
 then from Eqn. (\ref{VT1}) by using the cocycle property of $\Phi$ we have for any $t\geq s$,
\begin{eqnarray*}
Y(t, \omega)
&=&
\Phi(t-s,\theta_s\omega)Y(s,\omega)+\int_s^{t}\Phi(t-{\hat{s}},\theta_{\hat{s}}\omega)F({\hat{s}},Y({\hat{s}},\omega))d{\hat{s}}.
\end{eqnarray*}
This is to say that $Y(t,\omega)$ satisfies (\ref{mild}) with initial value $Y(s,\omega)$.
Now suppose that $u(t,s,\varphi_1,\omega)$ and $u(t,s,\varphi_2,\omega)$ are solutions of Eqn. (\ref{mild}) with 
$\mathcal{F}$-measurable initial values $\varphi_1$ and $\varphi_2$ respectively.  Then
\begin{eqnarray*}
&&|u(t,s,\varphi_1,\omega)-u(t,s,\varphi_2,\omega)|^2\\
&\leq &2 \|\Phi(t-s,\theta_s\omega)\|^2|\varphi_1-\varphi_2|^2\\
&&+2(T-s)\int_s^t\|\Phi(t-{\hat{s}}, \theta_{\hat{s}}\omega)\|^2\|\nabla F\|_{\infty}^2|u({\hat{s}},s,\varphi_1,\omega)-u({\hat{s}},s,\varphi_2,\omega)|^2\, d{\hat{s}},
\end{eqnarray*}
where
$$\|\nabla F\|^2_{\infty}:=\sup_{t\in \mathbb{R}, u\in \mathbb{R}^d}\|\nabla F(t,u)\|^2_{\mathcal{L}(\mathbb{R}^d)}.$$
For any $t>s$,
\begin{eqnarray*}
\|\Phi(t-s,\theta_s\omega)\|&=&\left\|\exp\big\{\dfrac{1}{2}(A+A^{*})(t-s)\big\}\right\|\left\|\exp\big\{\dfrac{1}{2}\textstyle\sum_{k=1}^{M}(B_k+B_k^{*})(W^k_t-W^k_s)\big\}\right\|\\
&\leq & {\rm e}^{\mu_1(t-s)}\prod_{k=1}^{M}\exp\left\{\|B\|(2C^k_{\delta,\omega}+|t|^{\delta}+|s|^{\delta})\right\}\\
&\leq & {\rm e}^{\mu_1(t-s)}\exp\left\{2M\|B\|\hat{T}+2\|B\|\textstyle\sum_{k=1}^{M}C^k_{\delta,\omega}\right\},
\end{eqnarray*}
where $\hat{T}:=\max\{|T|^{\delta}+|s|^{\delta},2|s|^{\delta}\}$, and the third line holds due to
the fact that there exists  $\Omega_1$ of full measure and a constant $\dfrac{1}{2}<\delta<1$ such that $$|W^k_t-W^k_s|\leq 2C^k_{\delta,\omega}+|t|^{\delta}+|s|^{\delta}.$$
Then for any $s\leq t\leq T$,
\begin{eqnarray*}
&&
|u(t,s,\varphi_1,\omega)-u(t,s,\varphi_2,\omega)|^2\\
&\leq &2 H_{\omega}(T-s)|\varphi_1-\varphi_2|^2+2(T-s)\|\nabla F\|_{\infty}^2H_{\omega}(T-s)\int_s^t|u({\hat{s}},s,\varphi_1,\omega)-u({\hat{s}},s,\varphi_2,\omega)|^2\, d{\hat{s}},
\end{eqnarray*}
where 
$$H_{\omega}(T-s)= {\rm e}^{2\mu_1(t-s)}\exp\left\{4M\|B\|\hat{T}+4\|B\|\textstyle\sum_{k=1}^{M}C^k_{\delta,\omega}\right\}.$$
Thus applying the Gronwall's inequality gives
$$|u(t,s,\varphi_1,\omega)-u(t,s,\varphi_2,\omega)|^2\leq  2H_{\omega}(T-s) |\varphi_1-\varphi_2|^2{\rm e}^{2\|\nabla F\|_{\infty}^2H_{\omega}(T-s)(T-s)^2}\ \ \mathbb{P}-\mbox{a.s.}$$
Now assume that $\varphi_1=\varphi_2$. Then it is easy to see that $u(t,s,\varphi_1,\omega)=u(t,s,\varphi_2,\omega)$ for any $\omega\in\Omega_1$ and $t\in [s,T]$. Hence from $\mathbb{P}(\Omega_1)=1$, 
$$\mathbb{P}\left\{u(t,s,\varphi_1,\omega)=u(t,s,\varphi_2,\omega)\ for\ any \ t\in \mathbb{Q}\cap [s,T]\right\}=1,$$
where $\mathbb{Q}$ is the set of rational numbers.
By the continuity of $t\to |u(t,s, \varphi_1,\omega)-u(t,s, \varphi_2,\omega)|$, it follows that
$$\mathbb{P}\left\{u(t,s,\varphi_1,\omega)=u(t,s,\varphi_2,\omega)\ for\ any \ \ t\in [s,T]\right\}=1.$$
This implies the uniqueness of solution of SDE (\ref{VT1}) within a finite time interval $[s, T]$.  
Then by Theorem \ref{Stratonovich} and
the uniqueness of the solution of the initial value problem (\ref{mild}), which is equivalent to (\ref{origT1}),
$$u(t,s, x,\omega)\Big|_{x=Y(s,\omega)}=u(t,s, Y(s,\omega),\omega)=Y(t,\omega).$$
The temperedness of $Y$ follows from the estimates (\ref{PhiBound}) and the boundedness of $F$. 

Conversely, assume Eqn. (\ref{origT1}) has a random periodic solution which is
 tempered from above. First note for any non-negative integer $l$, we have by Theorem \ref{Stratonovich},
\begin{eqnarray*}
 Y(t,\omega)
 &=&u(t\pm l\tau,t,Y(t,\theta _{\mp l\tau}\omega), \theta _{\mp l\tau}\omega)\\
 &=&\Phi(\pm l\tau, \theta _{t\mp l\tau}\omega)Y(t,\theta _{\mp l\tau}\omega)\\
 &&
 +\int _t^{t\pm l\tau}\Phi(t\pm l\tau-{\hat{s}}, \theta _{{\hat{s}}\mp l\tau}\omega)
 F({\hat{s}},u({\hat{s}},t,Y(t,\theta _{\mp l\tau}\omega),\theta _{\mp l\tau}\omega))d{\hat{s}}.
 \end{eqnarray*}
In particular,
\begin{eqnarray*}
P^-Y(t,\omega)&=&P^-
u(t+l\tau,t,Y(t,\theta _{-l\tau}\omega), \theta _{-l\tau}\omega)\\
&=&\Phi(l\tau, \theta _{t-l\tau}\omega)P^-Y(t,\theta _{-l\tau}\omega)+\int ^t_{t- l\tau}\Phi(t-{\hat{s}}, \theta _{{\hat{s}}}\omega)P^-
 F({\hat{s}},Y({\hat{s}},\omega))d{\hat{s}}\\
&\to&
\int _{-\infty}^t\Phi(t-{\hat{s}}, \theta _{{\hat{s}}}\omega)P^-
 F({\hat{s}},Y({\hat{s}},\omega))d{\hat{s}} \ \ \ \mathbb{P}-\mbox{a.s.},
\end{eqnarray*}
as $l\to+\infty$. The convergence deserves some justifications.
The convergence of the first term to $0$ as $l\to+\infty$ can be easily drawn from the estimate (\ref{PhiBound}) together with the tempered property of $|Y(t,\omega)|$ and $C(\omega)$. The 
convergence of the second term to the desired integral can be seen from the estimate of $\Phi$ and the boundedness of $F$.

Analogously, as $u$ is invertible,
\begin{eqnarray*}
P^+Y(t,\omega)&=&P^+
u(t-l\tau,t,Y(t,\theta _{l\tau}\omega), \theta _{l\tau}\omega)\\
&=&\Phi(-l\tau, \theta _{t+l\tau}\omega)P^+Y(t,\theta _{l\tau}\omega)-\int _t^{t+l\tau}\Phi(t-{\hat{s}}, \theta _{{\hat{s}}}\omega)P^+
 F({\hat{s}},Y({\hat{s}},\omega))d{\hat{s}}\\
&\to &
-\int ^{+\infty}_t\Phi(t-{\hat{s}}, \theta _{\hat{s}}\omega)P^+
 F({\hat{s}},Y({\hat{s}},\omega))d{\hat{s}} \ \ \ \mathbb{P}-\mbox{a.s.}
\end{eqnarray*}
as $l\to +\infty$. 
Therefore we have proved the converse part as $Y=P^+Y+P^-Y$. 
\end{proof}

\section{The existence of random periodic solutions and periodic measures}

After showing the equivalence of random periodic solutions of (\ref{origT1}) and the solutions of (\ref{VT1}), it remains to prove the existence of solutions to (\ref{VT1}). To check the relatively compactness is key to the proof of the the main result. In the following, we present the improved version of the Wiener-Sobolev compact embedding in \cite{F-zh1} with less conditions. We provide a brief proof in the Appendix for completeness. This kind of compactness in $L^2(\Omega)$ as a purely random variable
 version without including time and space variables was investigated in \cite{da-mall},\cite{peszat}, and in $L^2([a,b],L^2(\Omega))$ 
 was obtained in \cite{bally}. 

 \begin{thm}\label{B-S1}\upshape{(Relative Compactness in $C([a,b], L^2(\Omega))$.}
Consider a sequence $(v_n)_{n\in \mathbb{N}}$ of $C([a,b],L^2(\Omega))$. Suppose that:
\begin{enumerate}
\item $v_n(t,\cdot)\in {\cal D}^{1,2}$ and $\sup_{n\in \mathbb{N}}\sup_{t\in [a,b]} ||v_n(t,\cdot)||_{{1,2}}^2 <\infty$.
\item  There exists a constant $C>0$ such that for any $t, s\in [a,b]$,
         $$ \sup_n\mathbb{E}|v_n(t)-v_n(s)|^2 < C|t-s|.$$
\item (3i) There exists a constant $C>0$ such that for any $h_1 \in \mathbb{R}$, and any $t\in [a,b]$,
  $$\sup_n\int_{\mathbb{R}}\mathbb{E} |{\cal D}_{r+h_1}v_n(t)-{\cal D}_r v_n(t)|^2 dr<C|h_1|.$$
 (3ii) For any $\epsilon>0$, there exists $-\infty<\alpha<\beta<+\infty$ such that
$$\sup_n \sup_{t\in[a,b]}\int_{\mathbb{R}\backslash [\alpha,\beta]} \mathbb{E}|{\cal D}_r v_n(t)|^2 dr  <\epsilon.$$
\end{enumerate}
Then $\{v_n,\ n\in \mathbb{N}\}$ is relatively compact in $C([a,b],L^2(\Omega))$.
 \end{thm}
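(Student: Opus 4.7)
The plan is to apply the vector-valued Arzelà-Ascoli theorem: a subset of $C([a,b],X)$ with $X$ a Banach space is relatively compact if and only if it is equicontinuous and, for every $t\in[a,b]$, the set $\{v_n(t):n\in\mathbb{N}\}$ is relatively compact in $X$. Taking $X=L^2(\Omega)$, the problem decouples into verifying (a) uniform equicontinuity in $t$, which is immediate from condition~2 since
$$\|v_n(t)-v_n(s)\|_{L^2(\Omega)}\leq \sqrt{C}\,|t-s|^{1/2}$$
uniformly in $n$; and (b) for each fixed $t$, relative compactness of $\{v_n(t)\}_n$ in $L^2(\Omega)$.

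For step (b) I would follow the Wiener-Sobolev compactness strategy of Peszat and Da~Prato-Malliavin-Nualart. Expand each $v_n(t)=\sum_{k\geq 0}I_k(f_{n,k,t})$ in Wiener-It\^o chaos with symmetric kernels $f_{n,k,t}\in\hat L^2(\mathbb{R}^k)$. Via the chaos formula $\mathcal{D}_r G=\sum_{k\geq 1}kI_{k-1}(f_k(\cdot,r))$, condition~1 yields
$$\sum_{k\geq 1}k\cdot k!\,\|f_{n,k,t}\|_{L^2(\mathbb{R}^k)}^{2}=\mathbb{E}\int_{\mathbb{R}}|\mathcal{D}_rv_n(t)|^{2}dr\leq K$$
uniformly in $n,t$, so that the $L^2(\Omega)$-tail beyond the $N$-th chaos is bounded by $K/(N+1)$ uniformly. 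On the finite-chaos piece $P_{\leq N}v_n(t):=\sum_{k\leq N}I_k(f_{n,k,t})$, the $L^2(\Omega)$-norm is equivalent (chaos-by-chaos, up to a factorial factor) to $\sum_{k\leq N}\|f_{n,k,t}\|_{L^2(\mathbb{R}^k)}^{2}$, and the same chaos formula translates conditions 3i and 3ii into uniform equi-translation-continuity in one coordinate and uniform equi-integrability at infinity of $\{f_{n,k,t}\}_n\subset L^2(\mathbb{R}^k)$, for each $k\leq N$. Symmetry of the kernels promotes these to the full hypotheses of the Kolmogorov-Riesz-Fr\'echet criterion on $L^2(\mathbb{R}^k)$, giving relative compactness of $\{P_{\leq N}v_n(t)\}_n$ in $L^2(\Omega)$ for each $N$. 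A diagonal extraction across $N$ combined with the uniform tail estimate produces a Cauchy subsequence of $\{v_n(t)\}_n$ in $L^2(\Omega)$, completing step (b).

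The main obstacle is tracking uniform constants through the passage from Malliavin-derivative hypotheses to kernel-level Kolmogorov-Riesz hypotheses; in particular the chaos decomposition of $\|\mathcal{D}_rv_n(t)\|_{L^2(\Omega)}^{2}$ introduces the $k$-dependent weight $k^2(k-1)!$, which would be fatal if summed but is harmless once the tail past a \emph{fixed} $N$ has been absorbed by the $\mathcal{D}^{1,2}$-bound. Once pointwise relative compactness in $L^2(\Omega)$ has been secured, the uniform Hölder equicontinuity obtained from condition~2 together with Arzelà-Ascoli assembles the claimed relative compactness in $C([a,b],L^2(\Omega))$.
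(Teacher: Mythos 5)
Your proposal follows essentially the same route as the paper: generalized Arzel\`a--Ascoli reduces the problem to uniform equicontinuity (immediate from condition~2) plus pointwise relative compactness in $L^2(\Omega)$, which is then established through Wiener--It\^o chaos expansion, a tail cut via the uniform $\mathcal{D}^{1,2}$ bound, and the Kolmogorov--Riesz--Fr\'echet criterion applied to the kernels $f_{n,k,t}$ with translation continuity from (3i), tail decay from (3ii), and symmetry to promote the one-coordinate estimates to all coordinates. The only cosmetic difference is that the paper delegates the finite-chaos reduction and diagonal step to Bally--Saussereau, whereas you spell out the $K/(N+1)$ tail bound explicitly; the underlying argument is identical.
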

 
 The local existence theorem of random periodic solutions is presented below.
\begin{prop}\label{Main}
 Let $F:\mathbb{R} \times \mathbb{R}^d\to \mathbb{R}^d$ be in $C^3({\mathbb R}^{d+1})$, globally
bounded and the Jacobian $\nabla F(t,\cdot)$ be globally bounded, and $F(t,u)=F(t+\tau,u)$ for some fixed $\tau>0$, and  Condition (C) holds. Then there exists at least one ${\cal B}(\mathbb{R})\otimes\mathcal{F}$-measurable map
$Y^N: \mathbb{R}\times\Omega\rightarrow \mathbb{R}^d$ satisfying Eqn. (\ref{VNT1})
 and  $Y^N(t+\tau, \omega)=Y^N(t, \theta_{\tau}\omega)$ for
any $t\in \mathbb{R}$ and $\omega\in \Omega$.
\end{prop}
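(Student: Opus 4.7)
\medskip

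\noindent\textbf{Proof plan.} The strategy is to set up Eqn.~(\ref{VNT1}) as a fixed point problem for the operator
\begin{eqnarray*}
(\mathcal{M}^N Y)(t,\omega) &:=& \int_{-\infty}^t \Phi^N(t-\hat{s},\theta_{\hat{s}}\omega)P^- F(\hat{s},Y(\hat{s},\omega))\,d\hat{s} \\
&& -\int_t^{+\infty} \Phi^N(t-\hat{s},\theta_{\hat{s}}\omega)P^+ F(\hat{s},Y(\hat{s},\omega))\,d\hat{s},
\end{eqnarray*}
and to solve it by Schauder's fixed point theorem in $C([0,\tau],L^2(\Omega))$, using Theorem \ref{B-S1} as the compactness input. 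Because the bounds (\ref{PHI-B})--(\ref{PHI+B}) supply exponential decay $e^{\frac{1}{2}\mu_{m+1}(t-\hat s)}$ and $e^{\frac{1}{2}\mu_m(t-\hat s)}$ in the forward and backward tails (provided the free parameter $\Lambda$ is chosen strictly smaller than $\mu/2$), and $F$ is globally bounded, the defining integrals converge in $L^2(\Omega)$.

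\medskip

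\noindent I would look for the fixed point in the closed convex set
\begin{eqnarray*}
\mathcal{K} &:=& \bigl\{Y\in C([0,\tau],L^2(\Omega))\;:\; Y(t,\cdot)\in\mathcal{D}^{1,2},\; \sup_{t\in[0,\tau]}\|Y(t)\|_{1,2}\le R_1,\\
&& \mathbb{E}|Y(t)-Y(s)|^2\le R_2|t-s|,\; \int_{\mathbb{R}}\mathbb{E}|\mathcal{D}_{r+h}Y(t)-\mathcal{D}_r Y(t)|^2 dr\le R_3|h|,\\
&& \sup_{t\in[0,\tau]}\int_{|r|\ge R}\mathbb{E}|\mathcal{D}_r Y(t)|^2 dr \le \eta(R)\bigr\},
\end{eqnarray*}
with constants $R_1,R_2,R_3$ and a decay function $\eta(R)\downarrow 0$ to be chosen. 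The core of the argument is to verify that $\mathcal{M}^N$ sends $\mathcal{K}$ into itself. The $L^2$-bound uses $\|F\|_\infty$ together with the exponential dichotomy of $\Phi^N$. Time regularity follows by writing $(\mathcal{M}^N Y)(t)-(\mathcal{M}^N Y)(s)$ as a difference of integrals, bounding the ``boundary'' contribution by $\|F\|_\infty|t-s|$ and using Corollary \ref{LEMMA2C3} on the ``shifted kernel'' contribution. For the Malliavin estimates one differentiates $\mathcal{M}^N Y$, using the chain rule $\mathcal{D}_r F(\hat s,Y(\hat s))=\nabla_x F(\hat s,Y(\hat s))\mathcal{D}_r Y(\hat s)$ together with the explicit formula $\Phi^N=\Phi\cdot(N\text{-truncation})$ and $\mathcal{D}_r\Phi(t,\theta_{\hat s}\omega)=B_k\Phi(t,\theta_{\hat s}\omega)\chi_{[\hat s,t+\hat s]}(r)$; the hypothesis (3i) for $\mathcal{M}^N Y$ then reduces (via Lemma \ref{Preserving}) to the corresponding hypothesis for $Y$, and (3ii) follows from the exponential decay in the dichotomy combined with the truncation.

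\medskip

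\noindent Once $\mathcal{M}^N(\mathcal{K})\subset\mathcal{K}$ is established, Theorem \ref{B-S1} yields that $\mathcal{M}^N(\mathcal{K})$ is relatively compact in $C([0,\tau],L^2(\Omega))$. Continuity of $\mathcal{M}^N$ on $\mathcal{K}$ follows from Lipschitz continuity of $F$ and dominated convergence using the $L^2$-integrability of $\|\Phi^N(t-\hat s,\theta_{\hat s}\cdot)P^\pm\|$ from Corollary \ref{LEMMA2C3} and the truncation. Schauder's theorem then produces $Y^N\in\mathcal{K}$ with $\mathcal{M}^N Y^N = Y^N$.

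\medskip

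\noindent To obtain a solution on all of $\mathbb{R}\times\Omega$ satisfying $Y^N(t+\tau,\omega)=Y^N(t,\theta_\tau\omega)$, I would extend the fixed point from $[0,\tau]$ to $\mathbb{R}$ by the formula $Y^N(t+n\tau,\omega):=Y^N(t,\theta_{n\tau}\omega)$ for $t\in[0,\tau)$, $n\in\mathbb{Z}$. The $\tau$-periodicity of $F$, $P^\pm$ and the cocycle property of $\Phi$ (so $\Phi^N(t-\hat s,\theta_{\hat s}\omega)=\Phi^N(t-(\hat s-n\tau),\theta_{\hat s-n\tau}\theta_{n\tau}\omega)$ after truncation is re-interpreted via the $\theta_{n\tau}$-invariance of the tempered bounds) guarantee that the extended $Y^N$ still solves (\ref{VNT1}) on $\mathbb{R}$, and measurability in $\omega$ is preserved under the extension because $\theta_{n\tau}$ is measurable. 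The hardest technical step is verifying the Malliavin estimates $(3i)$ and $(3ii)$ for $\mathcal{M}^N Y$ uniformly in $Y\in\mathcal{K}$: the infinite intervals of integration force one to exploit the $e^{-\frac{1}{2}\mu|t-\hat s|}$ decay of $\Phi^N P^\pm$ against the unavoidable $e^{\Lambda|\hat s|}$ growth, and the tail control of $\mathcal{D}Y$ must interact correctly with the Malliavin derivative of $\Phi^N$ itself (through the indicator $\chi_{[\hat s,t+\hat s]}$)—this is where choosing $\Lambda<\mu/2$ and using the norm-preserving Lemma \ref{Preserving} becomes essential.
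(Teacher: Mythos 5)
Your proposal follows essentially the same high-level strategy as the paper: set up $\mathcal{M}^N$ as a fixed-point operator, verify it preserves the Wiener-Sobolev bounds, invoke Theorem~\ref{B-S1} for relative compactness, and apply Schauder. The key estimates you sketch (exponential dichotomy $+$ $\|F\|_\infty$ for the $L^2$ bound, Corollary~\ref{LEMMA2C3} for time regularity, chain rule and Lemma~\ref{Preserving} for the Malliavin conditions) are all the right ingredients, and they are the ingredients the paper uses. However, there is a structural gap in how you set up the fixed point problem.

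You propose to run Schauder in $C([0,\tau],L^2(\Omega))$ over a closed convex set $\mathcal{K}$, but the operator $\mathcal{M}^N$ involves integrals over $(-\infty,t]$ and $[t,+\infty)$, so $\mathcal{M}^N Y$ is not determined by the values of $Y$ on $[0,\tau]$ alone. You never say how a function in $\mathcal{K}$ is to be understood outside $[0,\tau]$ before $\mathcal{M}^N$ is applied, and your set $\mathcal{K}$ does not include the constraint $Y(t+\tau,\omega)=Y(t,\theta_\tau\omega)$. This constraint is not a cosmetic afterthought: it is what makes the infinite-horizon integrals tractable. For example, the verification that $\sup_{t\in[0,\tau]}\mathbb{E}\int_{\mathbb{R}}|\mathcal{D}^l_r \mathcal{M}^N(Y)(t)|^2\,dr$ is finite requires $\hat s\mapsto \mathbb{E}\int_{\mathbb{R}}|\mathcal{D}^l_r Y(\hat s)|^2\,dr$ to be $\tau$-periodic (via Lemma~\ref{Preserving}) so that the integral over $\hat s\in(-\infty,t]$ of $e^{\frac12\mu_{m+1}(t-\hat s)}\mathbb{E}\int|\mathcal{D}Y(\hat s)|^2\,dr\,d\hat s$ collapses to a quantity controlled by a single $[0,\tau)$ supremum. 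Without the periodicity constraint in $\mathcal{K}$ this step has no basis, and your subsequent ``extension'' $Y^N(t+n\tau,\omega):=Y^N(t,\theta_{n\tau}\omega)$ becomes circular, because the fixed point equation itself already forced you to interpret $Y$ globally.

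The paper resolves this cleanly by working from the outset in $C^{\Lambda}_{\tau}(\mathbb{R},L^2(\Omega,\mathbb{R}^d))$, the space of functions on all of $\mathbb{R}$ with the periodicity relation built in and with the weighted sup-norm $\sup_t e^{-2\Lambda|t|}\|f(t)\|_{L^2}$. The weight is not an extra refinement: the truncated kernels $\Phi^N P^\pm$ carry the factor $e^{\Lambda|\hat s|}$ from (\ref{PHI-B})--(\ref{PHI+B}), and the weighted norm is exactly what makes $\mathcal{M}^N$ a self-map of the global space (Step~1(A) of Lemma~\ref{Lem1}). The paper also explicitly checks in Step~1(C) that $\mathcal{M}^N$ preserves the periodicity relation $f(t+\tau,\omega)=f(t,\theta_\tau\omega)$ — a verification that does not appear in your outline but is what licenses the restriction of the compactness argument to $[0,\tau)$ and its subsequent lift to the whole line (final paragraph of the proof of Proposition~\ref{Main}). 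To repair your argument, add the constraint $Y(t+\tau,\omega)=Y(t,\theta_\tau\omega)$ to the definition of $\mathcal{K}$ (equivalently, work with the closed convex set of periodic-type extensions), and verify explicitly that $\mathcal{M}^N$ respects it; the rest of your sketch then goes through as in the paper.
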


\begin{rmk}\label{rmk1}
It will be clear from the proof of this theorem that the commutativity Condition (C) is necessary only in the case when $A$ is hyperbolic with at least one eigenvalue having a positive real part and at least one eigenvalue having 
negative real part, as otherwise, projection operators are not needed.
\end{rmk}

The idea of its proof is to find a fixed point in some specific Banach space under Schauder's fixed point argument \cite{F-zh1}. The proof of this theorem is quite long, so we break into many parts. Firstly we define a Banach space $C^{\Lambda}_{\tau}(\mathbb{R}, L^2(\Omega,\mathbb{R}^d))$ 
 \begin{eqnarray}\label{BSpace}
 C^{\Lambda}_{\tau}(\mathbb{R}, L^2(\Omega, \mathbb{R}^d)):=\{f\in C^{\Lambda}(\mathbb{R}, L^2(\Omega,\mathbb{R}^d)):
 \ {\rm for \ any} \ 
t\in \mathbb{R}, 
f(t+\tau,\omega)=f(t,\theta_{\tau}\omega) \}.
\end{eqnarray}
Here the norm of the metric space $ C^{\Lambda}(\mathbb{R}, L^2(\Omega,\mathbb{R}^d))$ is given as follows,
$$\|f\|_{\Lambda}:=\sup_{t\in \mathbb{R}}{\rm e}^{-2\Lambda|t|}\|f(t,\cdot)\|_{L^2(\Omega,\mathbb{R}^d)},$$
which is indeed a weighted norm with $0<\Lambda< \frac{1}{4}\mu=\frac{1}{4}\min \{-\mu_{m+1},\mu_m\}$. Define a map ${\cal M}^N$: for any $Y^N\in  C^{\Lambda}_{\tau}(\mathbb{R}, L^2(\Omega, \mathbb{R}^d))$,  
\begin{eqnarray}\label{Map}
{\cal M}^N(Y^N)(t,\omega)&=&\int_{-\infty}^t\Phi^N(t-{\hat{s}},\theta_{\hat{s}} \omega)P^-F({\hat{s}},Y^N({\hat{s}},\omega))d{\hat{s}} \nonumber\\
&&
-\int^{+\infty}_t\Phi^N(t-{\hat{s}},\theta_{\hat{s}} \omega)P^+F({\hat{s}},Y^N({\hat{s}},\omega))d{\hat{s}}.
\end{eqnarray}


\begin{lem}\label{Lem1} Under the conditions of Proposition \ref{Main}, the map 
$${\cal M}^N: C^{\Lambda}_{\tau}(\mathbb{R}, L^2(\Omega, \mathbb{R}^d))\to  C^{\Lambda}_{\tau}(\mathbb{R}, L^2(\Omega, \mathbb{R}^d))$$
is continuous. 
\end{lem}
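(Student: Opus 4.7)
The plan is to establish that $\mathcal{M}^N$ maps $C^{\Lambda}_{\tau}(\mathbb{R}, L^2(\Omega, \mathbb{R}^d))$ into itself and is in fact Lipschitz continuous in the weighted norm $\|\cdot\|_{\Lambda}$, which clearly implies continuity. The key observation is that by the very construction of the truncation in (\ref{PHI-B})--(\ref{PHI+B}), the operator norm of $\Phi^N$ is dominated by a deterministic quantity, almost surely:
\begin{equation*}
\|\Phi^N(t-\hat{s},\theta_{\hat{s}}\omega)P^{-}\|_{\mathcal{L}(\mathbb{R}^d)}\le N\mathrm{e}^{\frac{1}{2}\mu_{m+1}(t-\hat{s})}\mathrm{e}^{\Lambda|\hat{s}|} \quad (\hat{s}\le t),
\end{equation*}
and symmetrically for $\Phi^N P^{+}$ on $\{\hat{s}\ge t\}$. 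This deterministic domination lets us pull the operator norm outside the $L^2(\Omega)$-norm and reduce everything to a scalar integral estimate, sidestepping the delicate issues that anticipating integrands would otherwise cause.

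To obtain the continuity estimate, I would take two elements $Y_1, Y_2\in C^{\Lambda}_{\tau}$, apply Minkowski's integral inequality to each of the two integrals in (\ref{Map}), and then combine the deterministic bound on $\Phi^N$ with the global Lipschitz property of $F$ (which follows from the boundedness of $\nabla F$) together with the definition of the weighted norm, namely $\|Y_1(\hat{s})-Y_2(\hat{s})\|_{L^2(\Omega)}\le \mathrm{e}^{2\Lambda|\hat{s}|}\|Y_1-Y_2\|_{\Lambda}$. This yields
\begin{equation*}
\|\mathcal{M}^N(Y_1)(t)-\mathcal{M}^N(Y_2)(t)\|_{L^2(\Omega)}\le N\|\nabla F\|_{\infty}\|Y_1-Y_2\|_{\Lambda}\,I(t),
\end{equation*}
where
\begin{equation*}
I(t)=\int_{-\infty}^{t}\mathrm{e}^{\frac{1}{2}\mu_{m+1}(t-\hat{s})}\mathrm{e}^{3\Lambda|\hat{s}|}d\hat{s}+\int_{t}^{+\infty}\mathrm{e}^{\frac{1}{2}\mu_m(t-\hat{s})}\mathrm{e}^{3\Lambda|\hat{s}|}d\hat{s}.
\end{equation*}
Using the triangle-type inequality $|\hat{s}|\le|t|+|t-\hat{s}|$ to pull an $\mathrm{e}^{3\Lambda|t|}$ factor outside each integral, the residual integrands become $\mathrm{e}^{(\frac{1}{2}\mu_{m+1}+3\Lambda)(t-\hat{s})}$ and $\mathrm{e}^{(\frac{1}{2}\mu_m-3\Lambda)(\hat{s}-t)}$ respectively, both of which are integrable under the gap condition $\Lambda<\mu/4$ controlling $3\Lambda$ against the spectral rates. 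Multiplying by $\mathrm{e}^{-2\Lambda|t|}$ then gives a bound of the desired form $C_N\|Y_1-Y_2\|_{\Lambda}$ and thus the Lipschitz (hence continuous) property.

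Before this estimate, I would also verify the other ingredients required for $\mathcal{M}^N(Y)$ to lie in $C^{\Lambda}_{\tau}$. Well-definedness and the $\mathrm{e}^{2\Lambda|t|}$-growth control come from the same scalar integral argument applied with the boundedness of $F$ in place of Lipschitzness. Continuity of $t\mapsto\mathcal{M}^N(Y)(t)$ in $L^2(\Omega)$ follows by the dominated convergence theorem from the pathwise continuity of $\Phi(\cdot,\omega)$ together with the deterministic dominating function. Finally, the $\tau$-periodicity identity $\mathcal{M}^N(Y)(t+\tau,\omega)=\mathcal{M}^N(Y)(t,\theta_{\tau}\omega)$ is obtained by the change of variable $\hat{s}\mapsto\hat{s}+\tau$, exploiting $F(\cdot+\tau,\cdot)=F(\cdot,\cdot)$, the hypothesis $Y(\hat{s}+\tau,\omega)=Y(\hat{s},\theta_{\tau}\omega)$, the shift identity $\theta_{\hat{s}+\tau}\omega=\theta_{\hat{s}}(\theta_{\tau}\omega)$, and the corresponding compatibility of the truncation factor in (\ref{PHI-B})--(\ref{PHI+B}) under the shift. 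The main obstacle is the careful bookkeeping of exponential weights in the integral $I(t)$: one must separate the regions according to the signs of $\hat{s}$ and $\hat{s}-t$, and show that the $\mathrm{e}^{3\Lambda|\hat{s}|}$ growth is strictly beaten by the dichotomic decay rates so that the final bound is uniform in $t\in\mathbb{R}$.
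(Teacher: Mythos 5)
Your overall architecture (show the map preserves $C^{\Lambda}_{\tau}$, then establish a Lipschitz-type estimate in the weighted norm) matches the paper, and your treatment of well-definedness, $L^2$-continuity in $t$, and the $\tau$-periodicity identity is essentially the same as the paper's Step 1. However, the continuity estimate itself contains a genuine gap that the paper avoids by a different and essential device.

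The problem is your replacement $\|Y_1(\hat{s})-Y_2(\hat{s})\|_{L^2}\le e^{2\Lambda|\hat s|}\|Y_1-Y_2\|_{\Lambda}$ inside the integral. Combined with the $e^{\Lambda|\hat s|}$ coming from the truncation (\ref{PHI-B})--(\ref{PHI+B}), this produces the factor $e^{3\Lambda|\hat s|}$ in $I(t)$, and this simply does not close against the outer weight $e^{-2\Lambda|t|}$. Concretely, for $t>0$ and $\hat s\in[0,t]$ the piece $\int_0^t e^{\frac12\mu_{m+1}(t-\hat s)}e^{3\Lambda\hat s}\,d\hat s = e^{3\Lambda t}\int_0^t e^{(\frac12\mu_{m+1}-3\Lambda)u}\,du \sim C\,e^{3\Lambda|t|}$, so $e^{-2\Lambda|t|}I(t)\gtrsim e^{\Lambda|t|}\to\infty$; no uniform-in-$t$ bound results. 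Moreover even the convergence of the tail $\int_{-\infty}^0 e^{\frac12\mu_{m+1}(t-\hat s)}e^{3\Lambda|\hat s|}\,d\hat s$ requires $\frac12\mu_{m+1}+3\Lambda<0$, i.e.\ $\Lambda<\frac16|\mu_{m+1}|$, which is \emph{not} implied by the paper's hypothesis $\Lambda<\frac14\mu$ (take $\Lambda\in(\frac16\mu,\frac14\mu)$). So your claim that the integrals are ``integrable under the gap condition $\Lambda<\mu/4$'' is false, and even when they do converge the $t$-dependence is fatal.

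The missing ingredient is the $\tau$-periodicity of $g(\hat s):=\mathbb{E}|Y_1(\hat s)-Y_2(\hat s)|^2$, which holds because $Y_i(\hat s+\tau,\omega)=Y_i(\hat s,\theta_\tau\omega)$ and $\theta_\tau$ is measure-preserving. The paper never converts $g(\hat s)$ into $e^{4\Lambda|\hat s|}\|Y_1-Y_2\|_\Lambda^2$ under the integral sign. Instead it absorbs only the single factor $e^{\Lambda|\hat s|}$ from the truncation into the exponential kernel (writing $e^{\Lambda|\hat s|}\le e^{\Lambda\hat s}+e^{-\Lambda\hat s}$ and $e^{\pm\Lambda\hat s}=e^{\pm\Lambda t}e^{\mp\Lambda(t-\hat s)}$, with $e^{-2\Lambda|t|}e^{\pm 2\Lambda t}\le1$), applies Cauchy--Schwarz, and then bounds $\int_{-\infty}^t e^{(\frac12\mu_{m+1}\mp\Lambda)(t-\hat s)}g(\hat s)\,d\hat s$ by $\sup_{s\in[0,\tau)}g(s)\cdot\int_{-\infty}^t e^{(\frac12\mu_{m+1}\mp\Lambda)(t-\hat s)}\,d\hat s$, using periodicity of $g$ to pass to the compact window $[0,\tau)$. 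The remaining integral is then uniformly bounded in $t$, and the weights close. This is exactly the step your proposal lacks; without it the Minkowski/Lipschitz route you describe cannot yield a bound uniform in $t\in\mathbb{R}$.
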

\begin{proof}
{\it \textbf{Step 1}}: We now show that ${\cal M}^N$ maps $ C^{\Lambda}_{\tau}(\mathbb{R}, L^2(\Omega, \mathbb{R}^d))$ into itself.
\begin{enumerate}
  \item[(A)] We first verify that for any $Y^N\in  C^{\Lambda}_{\tau}(\mathbb{R}, L^2(\Omega, \mathbb{R}^d))$,
   $$\sup\limits_{t\in \mathbb{R}}{\rm e}^{-2\Lambda|t|}\mathbb{E}|{\cal M}^N(Y^N)(t,\cdot)|^2<\infty.$$ 
   Actually  by (\ref{PHI-B}) and (\ref{PHI+B}) we have that
  \begin{eqnarray*}
&&{\rm e}^{-2\Lambda|t|}\mathbb{E}|{\cal M}^N(Y^N)(t,\cdot)|^2 \\
&\leq &2{\rm e}^{-2\Lambda|t|}\mathbb{E}\Big|\int_{-\infty}^t\Phi^N_{t-{\hat{s}},{\hat{s}}}P^-F({\hat{s}},Y^N)d{\hat{s}}\Big|^2+2{\rm e}^{-\Lambda|t|}\mathbb{E}\Big|\int^{+\infty}_t\Phi^N_{t-{\hat{s}},{\hat{s}}}P^+F({\hat{s}},Y^N)d{\hat{s}}\Big|^2 \\
  & \leq&2{\rm e}^{-2\Lambda|t|}\|F\|_{\infty}^2\bigg\{\mathbb{E}\Big(\int_{-\infty}^t\|\Phi^N_{t-{\hat{s}},{\hat{s}}}P^-\|d{\hat{s}}\Big)^2+\mathbb{E}\Big(\int^{+\infty}_t\|\Phi^N_{t-{\hat{s}},{\hat{s}}}P^+\|d{\hat{s}}\Big)^2\bigg\}\\
  & \leq&2N^2\|F\|_{\infty}^2{\rm e}^{-2\Lambda|t|}\bigg\{\Big(\int_{-\infty}^t{\rm e}^{\frac{1}{2}\mu_{m+1}(t-{\hat{s}})}{\rm e}^{\Lambda|\hat{s}|}d{\hat{s}}\Big)^2+\Big(\int^{+\infty}_t{\rm e}^{\frac{1}{2}\mu_{m}(t-{\hat{s}})}{\rm e}^{\Lambda|\hat{s}|}d{\hat{s}}\Big)^2\bigg\}.
\end{eqnarray*}
Here $\Phi_{t,\hat{s}}P^{\pm}$ is the shorthand for $\Phi(t, \theta_{\hat{s}}\omega)P^{\pm}$, and $\Phi_{t}P^{\pm}$ is the shorthand for $\Phi(t, \omega)P^{\pm}$. Note that ${\rm e}^{\Lambda|\hat{s}|}\leq {\rm e}^{-\Lambda\hat{s}}+{\rm e}^{\Lambda\hat{s}}$,  ${\rm e}^{-2\Lambda|t|}\leq {\rm e}^{-2\Lambda t}$ 
and ${\rm e}^{-2\Lambda|t|}\leq {\rm e}^{2\Lambda t}$ for all $\hat{s},t\in \mathbb{R}$. The first integral in the above can be estimated as
\begin{eqnarray*}
&&
{\rm e}^{-2\Lambda|t|}\Big(\int_{-\infty}^t{\rm e}^{\frac{1}{2}\mu_{m+1}(t-{\hat{s}})}{\rm e}^{\Lambda|\hat{s}|}d{\hat{s}}\Big)^2\\
&\leq & {\rm e}^{-2\Lambda|t|}\Big(\int_{-\infty}^t{\rm e}^{\frac{1}{2}\mu_{m+1}(t-{\hat{s}})}{\rm e}^{-\Lambda\hat{s}}d{\hat{s}}\Big)^2+\Big(\int_{-\infty}^t{\rm e}^{\frac{1}{2}\mu_{m+1}(t-{\hat{s}})}{\rm e}^{\Lambda\hat{s}}d{\hat{s}}\Big)^2\\
&\leq &\Big(\int_{-\infty}^t{\rm e}^{(\frac{1}{2}\mu_{m+1}+\Lambda)(t-{\hat{s}})}d{\hat{s}}\Big)^2+\Big(\int_{-\infty}^t{\rm e}^{(\frac{1}{2}\mu_{m+1}-\Lambda)(t-{\hat{s}})}d{\hat{s}}\Big)^2\\
&\leq & \frac{1}{(\mu_{m+1}+2\Lambda)^2}+\frac{1}{(\mu_{m+1}-2\Lambda)^2}.
\end{eqnarray*}
The second integral integral can be estimated similarly.  Putting them together, we have
\begin{eqnarray*}
&&{\rm e}^{-2\Lambda|t|}\mathbb{E}|{\cal M}^N(Y^N)(t,\cdot)|^2\\
&\leq &
8N^2\|F\|_{\infty}^2 \bigg\{\frac{1}{(\mu_{m+1}+2\Lambda)^2}+\frac{1}{(\mu_{m+1}-2\Lambda)^2}+\frac{1}{(\mu_{m}+2\Lambda)^2}+\frac{1}{(\mu_{m}-2\Lambda)^2}\bigg\}.
\end{eqnarray*}
  \item[(B)]  Next we show that ${\cal M}^N (Y^N)(\cdot, \omega)$ is continuous from $\mathbb{R}$ to $L^2(\Omega, \mathbb{R}^d)$ for any given $Y^N\in C^{\Lambda}_{\tau}(\mathbb{R},L^2(\Omega, \mathbb{R}^d))$. First note for any $t_1$, $t_2 \in \mathbb{R}$ with $t_1\leq t_2$,
\begin{eqnarray*}
&&\mathbb{E}|{\cal M}^N(Y^N)(t_1)-{\cal M}^N(Y^N)(t_2)|^2\\
&\leq &4\mathbb{E}\bigg[\Big|\int_{-\infty}^{t_1}(\Phi^N_{t_1-{\hat{s}},{\hat{s}}}P^--\Phi^N_{t_2-{\hat{s}},{\hat{s}}}P^-)F({\hat{s}},Y^N)d{\hat{s}}\Big|^2+\Big|\int_{t_1}^{t_2}\Phi^N_{t_2-{\hat{s}},{\hat{s}}}P^-F({\hat{s}},Y^N)d{\hat{s}}\Big|^2\\
&&+\Big|\int^{+\infty}_{t_2}(\Phi^N_{t_1-{\hat{s}},{\hat{s}}}P^+-\Phi^N_{t_2-{\hat{s}},{\hat{s}}}P^+)F({\hat{s}},Y^N)d{\hat{s}}\Big|^2+\Big|\int_{t_1}^{t_2}\Phi^N_{t_1-{\hat{s}},{\hat{s}}}P^+F({\hat{s}},Y^N)d{\hat{s}}\Big|^2\bigg]\\
&=:&\sum_{i=1}^4 T_i.
\end{eqnarray*}
It is easy to check that
\begin{eqnarray*}
T_2=4\mathbb{E}\Big|\int_{t_1}^{t_2}\Phi^N_{t_2-{\hat{s}},{\hat{s}}}P^-F({\hat{s}},Y^N(\hat{s},\cdot))d{\hat{s}}\Big|^2
&\leq &4N^2\|F\|_{\infty}^2 \Big(\int_{t_1}^{t_2} {\rm e}^{\frac{1}{2}\mu_{m+1}(t_2-\hat{s})}{\rm e}^{\Lambda|\hat{s}|}d{\hat{s}}\Big)^2\\
&\leq & 4N^2\|F\|_{\infty}^2\max\{{\rm e}^{2\Lambda|t_2|},{\rm e}^{2\Lambda|t_1|}\}|t_2-t_1|^2, 
\end{eqnarray*}
and similarly
\begin{eqnarray*}
T_4=4\mathbb{E}\Big|\int_{t_1}^{t_2}\Phi^N_{t_1-{\hat{s}},{\hat{s}}}P^+F({\hat{s}},Y^N(\hat{s},\cdot))d{\hat{s}}\Big|^2\leq 4N^2\|F\|_{\infty}^2\max\{{\rm e}^{2\Lambda|t_2|},{\rm e}^{2\Lambda|t_1|}\}|t_2-t_1|^2.
\end{eqnarray*}
As for $T_1$, we have the following inequalities through the estimates in Lemma \ref{LEMMA2C3}, 
\begin{eqnarray*}
T_1&:=&4\mathbb{E}\Big|\int_{-\infty}^{t_1}(\Phi^N_{t_1-{\hat{s}},{\hat{s}}}P^--\Phi^N_{t_2-{\hat{s}},{\hat{s}}}P^-)F({\hat{s}},Y^N(\hat{s},\omega))d{\hat{s}}\Big|^2\\
&\leq & 8\mathbb{E}\Big|\int_{-\infty}^{t_1}(\Phi_{t_1-{\hat{s}},{\hat{s}}}P^--\Phi_{t_2-{\hat{s}},{\hat{s}}}P^-)\min\Big\{1,\frac{N{\rm e}^{\frac{1}{2}\mu_{m+1}(t_1-\hat{s})}{\rm e}^{\Lambda|\hat{s}|}}{\|\Phi_{t_1-{\hat{s}},{\hat{s}}}P^-\|}\Big\}F({\hat{s}},Y^N)d{\hat{s}}\Big|^2\\
&+&8\mathbb{E}\Big|\int_{-\infty}^{t_1}\Phi_{t_2-{\hat{s}},{\hat{s}}}P^-\Bigg( \min\Big\{1,\frac{N{\rm e}^{\frac{1}{2}\mu_{m+1}(t_1-\hat{s})}{\rm e}^{\Lambda|\hat{s}|}}{\|\Phi_{t_1-{\hat{s}},{\hat{s}}}P^-\|}\Big\}\\
&&\hspace{3.3cm}-\min\Big\{1,\frac{N{\rm e}^{\frac{1}{2}\mu_{m+1}(t_2-\hat{s})}{\rm e}^{\Lambda|\hat{s}|}}{\|\Phi_{t_2-{\hat{s}},{\hat{s}}}P^-\|}\Big\}\Bigg) F({\hat{s}},Y^N(\hat{s},\omega))d{\hat{s}}\Big|^2.
\end{eqnarray*}
By using inequality $|\min\{1,a\}-\min\{1,b\}|\leq |a-b|$ whenever $a,b \geq 0$, so
for $s<t_1<t_2$ we have
\begin{eqnarray*}
&&\left|\min\Big\{1,\frac{N{\rm e}^{\frac{1}{2}\mu_{m+1}(t_1-\hat{s})}{\rm e}^{\Lambda|\hat{s}|}}{\|\Phi_{t_1-{\hat{s}},{\hat{s}}}P^-\|}\Big\}-\min\Big\{1,\frac{N{\rm e}^{\frac{1}{2}\mu_{m+1}(t_2-\hat{s})}{\rm e}^{\Lambda|\hat{s}|}}{\|\Phi_{t_2-{\hat{s}},{\hat{s}}}P^-\|}\Big\}\right|\\
&\leq & \left|\frac{N{\rm e}^{\frac{1}{2}\mu_{m+1}(t_1-\hat{s})}{\rm e}^{\Lambda|\hat{s}|}}{\|\Phi_{t_1-{\hat{s}},{\hat{s}}}P^-\|}-\frac{N{\rm e}^{\frac{1}{2}\mu_{m+1}(t_2-\hat{s})}{\rm e}^{\Lambda|\hat{s}|}}{\|\Phi_{t_2-{\hat{s}},{\hat{s}}}P^-\|}\right|\\
&\leq &\Bigg|\frac{N{\rm e}^{\frac{1}{2}\mu_{m+1}(t_1-\hat{s})}{\rm e}^{\Lambda|\hat{s}|}}{\|\Phi_{t_1-{\hat{s}},{\hat{s}}}P^-\|}-\frac{N{\rm e}^{\frac{1}{2}\mu_{m+1}(t_2-\hat{s})}{\rm e}^{\Lambda|\hat{s}|}}{\|\Phi_{t_1-{\hat{s}},{\hat{s}}}P^-\|}\Bigg|
+\Bigg|\frac{N{\rm e}^{\frac{1}{2}\mu_{m+1}(t_2-\hat{s})}{\rm e}^{\Lambda|\hat{s}|}}{\|\Phi_{t_1-{\hat{s}},{\hat{s}}}P^-\|}-\frac{N{\rm e}^{\frac{1}{2}\mu_{m+1}(t_2-\hat{s})}{\rm e}^{\Lambda|\hat{s}|}}{\|\Phi_{t_2-{\hat{s}},{\hat{s}}}P^-\|}\Bigg|\\
&\leq &\frac{N{\rm e}^{\Lambda|\hat{s}|}}{\|\Phi_{t_1-{\hat{s}},{\hat{s}}}P^-\|}\Big({\rm e}^{\frac{1}{2}\mu_{m+1}(t_1-\hat{s})}-{\rm e}^{\frac{1}{2}\mu_{m+1}(t_2-\hat{s})}\Big)+N{\rm e}^{\Lambda|\hat{s}|{\rm e}^{\frac{1}{2}\mu_{m+1}(t_2-\hat{s})}}\Bigg|\frac{\|\Phi_{t_1-{\hat{s}},{\hat{s}}}P^-\|-\|\Phi_{t_2-{\hat{s}},{\hat{s}}}P^-\|}{\|\Phi_{t_2-{\hat{s}},{\hat{s}}}P^-\|\|\Phi_{t_1-{\hat{s}},{\hat{s}}}P^-\|}\Bigg|\\
&\leq & \frac{N{\rm e}^{\Lambda|\hat{s}|}{\rm e}^{\frac{1}{2}\mu_{m+1}(t_1-\hat{s})}}{\|\Phi_{t_2-{\hat{s}},{\hat{s}}}P^-\|}\left((1-{\rm e}^{\frac{1}{2}\mu_{m+1}(t_2-t_1)})\|\Phi_{t_2-{t_1},{t_1}}P^-\|+\|\Phi_{t_2-t_1,t_1}P^--P^-\|\right).
\end{eqnarray*}
Therefore 
\begin{eqnarray*}
T_1
&\leq& 384N^2\|F\|_{\infty}^2{\rm e}^{2\Lambda|t_1|}\Big(\frac{1}{|\mu_{m+1}+2\Lambda|^2}+\frac{1}{|\mu_{m+1}-2\Lambda|^2}\Big)\\
&&\cdot[\mathbb{E}\|\Phi_{t_2-t_1,t_1}P^--P^-\|^2+\mu_{m+1}^2(t_2-t_1)^2\mathbb{E}\|\Phi_{t_2-t_1,t_1}P^-\|^2]\\
&\leq &CN^2\|F\|_{\infty}^2{\rm e}^{2\Lambda|t_1|}{\rm e}^{2\|A\||t_2-t_1|+2M\|B\|^2|t_2-t_1|}\Big(\frac{1}{|\mu_{m+1}+2\Lambda|^2}+\frac{1}{|\mu_{m+1}-2\Lambda|^2}\Big)\\
&&\cdot[(1+\mu^2_{m+1})|t_2-t_1|^2+|t_2-t_1|],
\end{eqnarray*}
where the last inequality follows from Lemma \ref{LEMMA2C3}.
Similarly,
\begin{eqnarray*}
T_3&:=&4\mathbb{E}\Big|\int^{+\infty}_{t_2}(\Phi^N_{t_1-{\hat{s}},{\hat{s}}}P^+
-\Phi^N_{t_2-{\hat{s}},{\hat{s}}}P^+)F({\hat{s}},Y^N(\hat{s},\cdot))d{\hat{s}}\Big|^2\\
&\leq &CN^2\|F\|_{\infty}^2{\rm e}^{2\Lambda|t_2|}{\rm e}^{2\|A\||t_2-t_1|+2M\|B\|^2|t_2-t_1|}\Big(\frac{1}{|\mu_{m}+2\Lambda|^2}+\frac{1}{|\mu_{m}-2\Lambda|^2}\Big)\\
&&\cdot[(1+\mu^2_{m})|t_2-t_1|^2+|t_2-t_1|].
\end{eqnarray*}
  \item[(C)]   We show that ${\cal M}^N(Y^N)(t,\theta _{\pm\tau}\omega)={\cal M}^N(Y^N)(t\pm\tau,\omega)$: similar as in \cite{F-zh1}, as $Y^N(t+\tau,\omega)=Y^N(t,\theta_{\tau}\omega)$, so
  \begin{eqnarray*}
  &&
  {\cal M}^N(Y^N)(t,\theta_{\tau}\omega)\\
  &=&\int_{-\infty}^t\Phi^N_{t-{\hat{s}},\hat{s}+\tau}P^-F({\hat{s}},Y^N({\hat{s}},\theta_{\tau}\omega))d{\hat{s}} -\int^{+\infty}_t\Phi^N_{t-\hat{s},\hat{s}+\tau}P^+F(\hat{s},Y^N(\hat{s},\theta_{\tau}\omega))d\hat{s}\\
  &=&\int_{-\infty}^t\Phi^N_{(t+\tau)-({\hat{s}}+\tau),\hat{s}+\tau}P^-F({\hat{s}}+\tau,Y^N({\hat{s}}+\tau,\omega))d{\hat{s}} \\
    & &-\int^{+\infty}_t\Phi^N_{(t+\tau)-({\hat{s}}+\tau),\hat{s}+\tau}P^+F({\hat{s}}+\tau,Y^N({\hat{s}}+\tau,\omega))d{\hat{s}}\\
      &=&\int_{-\infty}^{t+\tau}\Phi^N_{(t+\tau)-\hat{h},\hat{h}}P^-F(\hat{h},Y^N(\hat{h},\omega))d\hat{h} -\int^{+\infty}_{t+\tau}\Phi^N_{(t+\tau)-\hat{h},\hat{h}}P^+F(\hat{h},Y^N(\hat{h},\omega))d\hat{h}\\
        &=&{\cal M}^N(Y^N)(t+\tau,\omega).
  \end{eqnarray*}
  \end{enumerate}
Thus we completed the Step 1 and proved that ${\cal M}^N$ maps $C^{\Lambda}_{\tau}(\mathbb{R}, L^2(\Omega,\mathbb{R}^d))$ into itself.

{\it \textbf{Step 2}}: We now check the continuity of the map ${\cal M}^N: C^{\Lambda}_{\tau}(\mathbb{R}, L^2(\Omega,\mathbb{R}^d))\to  C^{\Lambda}_{\tau}(\mathbb{R}, L^2(\Omega,\mathbb{R}^d))$. For $Y_1^N,Y_2^N\in
C^{\Lambda}_{\tau}(\mathbb{R}, L^2(\Omega,\mathbb{R}^d))$ and $t\in [j\tau,(j+1)\tau)$ for some $j\in \mathbb{Z}$, we have 
\begin{eqnarray*}
&&{\rm e}^{-2\Lambda|t|}\mathbb{E}|{\cal M}^N(Y^N_1)(t,\cdot)-{\cal M}^N(Y^N_2)(t,\cdot)|^2\\
&\leq &2{\rm e}^{-2\Lambda|t|}\mathbb{E}\Big|\int_{-\infty}^t\Phi^N_{t-\hat{s},\hat{s}}P^-F(\hat{s},Y_1^N(\hat{s},\cdot))d\hat{s}-\int_{-\infty}^t\Phi^N_{t-\hat{s},\hat{s}}P^-F(\hat{s},Y_2^N(\hat{s},\cdot))d\hat{s}\Big|^2\\
&&+2{\rm e}^{-2\Lambda|t|}\mathbb{E}\Big|\int^{+\infty}_t\Phi^N_{t-\hat{s},\hat{s}}P^+F(\hat{s},Y_1^N(\hat{s},\cdot))d\hat{s}-\int^{+\infty}_t\Phi^N_{t-\hat{s},\hat{s}}P^+F(\hat{s},Y_2^N(\hat{s},\cdot))d\hat{s}\Big|^2,\\
&:=&\hat{T}_1+\hat{T}_2.
\end{eqnarray*}
By using the Cauchy-Schwarz inequality we have that
\begin{eqnarray*}
\hat{T}_1
&\leq &2\|\nabla  F\|_{\infty}^2{\rm e}^{-2\Lambda|t|}\mathbb{E}\Big(\int_{-\infty}^t\|\Phi^N_{t-\hat{s},\hat{s}}P^-\||Y_1^N(\hat{s},\cdot)-Y_2^N(\hat{s},\cdot)|d\hat{s}\Big)^2\\
&\leq & 4N^2\|\nabla  F\|_{\infty}^2\mathbb{E}\Big(\int_{-\infty}^t{\rm e}^{(\frac{1}{2}\mu_{m+1}-\Lambda)(t-\hat{s})}|Y_1^N(\hat{s},\cdot)-Y_2^N(\hat{s},\cdot)|d\hat{s}\Big)^2\\
&&+4N^2\|\nabla  F\|_{\infty}^2\mathbb{E}\Big(\int_{-\infty}^t{\rm e}^{(\frac{1}{2}\mu_{m+1}+\Lambda)(t-\hat{s})}|Y_1^N(\hat{s},\cdot)-Y_2^N(\hat{s},\cdot)|d\hat{s}\Big)^2\\
&\leq &\frac{8}{|\mu_{m+1}-2\Lambda|}N^2\|\nabla  F\|_{\infty}^2\int_{-\infty}^t{\rm e}^{(\frac{1}{2}\mu_{m+1}-\Lambda)(t-\hat{s})}\mathbb{E}|Y_1^N(\hat{s},\cdot)-Y_2^N(\hat{s},\cdot)|^2d\hat{s}\\
&&+\frac{8}{|\mu_{m+1}+2\Lambda|}N^2\|\nabla  F\|_{\infty}^2\int_{-\infty}^t{\rm e}^{(\frac{1}{2}\mu_{m+1}+\Lambda)(t-\hat{s})}\mathbb{E}|Y_1^N(\hat{s},\cdot)-Y_2^N(\hat{s},\cdot)|^2d\hat{s}.
\end{eqnarray*}
Note that $\mathbb{E}|Y_1^N(\hat{s},\cdot)-Y_2^N(\hat{s},\cdot)|^2$ is a nonegative periodic function in $C^{\Lambda}(\mathbb{R})$ with period $\tau$
as
\begin{equation*}
\mathbb{E}|Y_1^N(\hat{s}+\tau,\cdot)-Y_2^N(\hat{s}+\tau,\cdot)|^2= \mathbb{E}|Y_1^N(\hat{s},\theta_{\tau}\cdot)-Y_2^N(\hat{s},\theta_{\tau}\cdot)|^2=\mathbb{E}|Y_1^N(\hat{s},\cdot)-Y_2^N(\hat{s},\cdot)|^2.
\end{equation*}
Then we have
\begin{eqnarray*}
&&\int_{-\infty}^t{\rm e}^{(\frac{1}{2}\mu_{m+1}\pm\Lambda)(t-\hat{s})}\mathbb{E}|Y_1^N(\hat{s},\cdot)-Y_2^N(\hat{s},\cdot)|^2d\hat{s}\\
&\leq &  \sup_{s\in[0,\tau)}\mathbb{E}|Y_1^N(s,\cdot)-Y_2^N(s,\cdot)|^2\int_{-\infty}^t{\rm e}^{(\frac{1}{2}\mu_{m+1}\pm\Lambda)(t-\hat{s})}d\hat{s}\\
&\leq &\frac{2{\rm e}^{2\Lambda\tau}}{|\mu_{m+1}\pm 2\Lambda|} \sup_{s\in[0,\tau)}{\rm e}^{-2\Lambda|s|}\mathbb{E}|Y_1^N(s,\cdot)-Y_2^N(s,\cdot)|^2.
\end{eqnarray*}
This leads to
\begin{eqnarray*}
\hat{T}_1 &\leq & 6N^2\|\nabla F\|_{\infty}^2{\rm e}^{2\Lambda\tau}\Big\{\frac{1}{(\mu_{m+1}-2\Lambda)^2}+\frac{1}{(\mu_{m+1}+2\Lambda)^2}\Big\}\sup_{\hat{s}\in \mathbb{R}}{\rm e}^{-2\Lambda|\hat{s}|}\mathbb{E}|Y_1^N(\hat{s},\cdot)-Y_2^N(\hat{s},\cdot)|^2,
\end{eqnarray*}
Similarly
\begin{eqnarray*}
\hat{T}_2&\leq &16N^2\|\nabla F\|_{\infty}^2{\rm e}^{2\Lambda\tau}\Big\{\frac{1}{(\mu_{m}-2\Lambda)^2}+\frac{1}{(\mu_{m}+2\Lambda)^2}\Big\}\sup_{\hat{s}\in \mathbb{R}}{\rm e}^{-2\Lambda|\hat{s}|}\mathbb{E}|Y_1^N(\hat{s},\cdot)-Y_2^N(\hat{s},\cdot)|^2.
\end{eqnarray*}
Therefore the continuity of ${\cal M}^N: C^{\Lambda}_{\tau}(\mathbb{R}, L^2(\Omega,\mathbb{R}^d))\to
C^{\Lambda}_{\tau}(\mathbb{R}, L^2(\Omega,\mathbb{R}^d))$ is verified. 
\end{proof}
\begin{rmk} One can see that it is crucial to use the truncation of the tempered random variable $C(\omega)$ in the Step 2 of the proof. Otherwise, it would be difficult to separate $\|\Phi_{t-\hat{s},\hat{s}}^NP^\pm\|^2$ and $|Y^N_1(\hat{s},\omega)-Y^N_2(\hat{s},\omega)|^2$ inside the integrals in $\hat{T}_1$ and $\hat{T}_2$, where H\"older's inequality seems losing its power here.
Needless to say that a key step to make it work is to remove the truncation eventually.   
\end{rmk}

\begin{lem}\label{LEMMAMD} Given $\Phi^N(t,\theta_{\hat{s}}\omega)P^{\pm}$ defined by (\ref{PHI+B}) and (\ref{PHI-B}), the Malliavin derivatives of $\Phi^N(t,\theta_{\hat{s}}\omega)P^{\pm}$ with respect to the $l$-th Brownian motion, $l\in \{1,2,\cdots,M\}$, are given by: when $t\geq 0$ 
\begin{eqnarray}\label{md1}
&&{\cal D}_r^l\Phi^N(t,\theta_{\hat{s}}\omega)P^-\nonumber\\
&=& \chi_{\{\hat{s}\leq r\leq t+\hat{s}\}}(r)\Bigg\{B_l\Phi(t,\theta_{\hat{s}}\omega)P^-\min\left\{1,  \frac{N{\rm e}^{\frac{1}{2}\mu_{m+1}t}{\rm e}^{\Lambda|\hat{s}|}}{\| \Phi(t,\theta_{\hat{s}}\omega)P^{-}\|}\right\}\nonumber\\
&&\hspace{2.5cm}-\chi_{\{\| \Phi(t,\theta_{\hat{s}}\omega)P^{-}\|>N{\rm e}^{\frac{1}{2}\mu_{m+1}t}{\rm e}^{\Lambda|\hat{s}|}\}}(\omega)\frac{N{\rm e}^{\frac{1}{2}\mu_{m+1}t}{\rm e}^{\Lambda|\hat{s}|}}{\| \Phi(t,\theta_{\hat{s}}\omega)P^{-}\|^3}\nonumber\\
&&\hspace{2.9cm}\cdot \Big(\sum_{i,j=1}^{d}(\Phi(t,\theta_{\hat{s}}\omega)P^-)_{ij}\sum_{k=1}^d(B_l)_{ik}(\Phi(t,\theta_{\hat{s}}\omega)P^-)_{kj}\Big)\Phi(t,\theta_{\hat{s}}\omega)P^-\Bigg\}
\end{eqnarray} 
with the estimate that
\begin{equation}\label{MDesti1}
\|{\cal D}_r^l\Phi^N(t,\theta_{\hat{s}}\omega)P^-\|\leq (1+d^3)\|B\|N{\rm e}^{\frac{1}{2}\mu_{m+1}(t-\hat{s})}{\rm e}^{\Lambda|\hat{s}|},
\end{equation}
and when $t\leq 0$ 
\begin{eqnarray}\label{md2}
&&{\cal D}_r^l\Phi^N(t,\theta_{\hat{s}}\omega)P^+\nonumber\\
&=&\chi_{\{t+\hat{s}\leq r\leq \hat{s} \}}(r)\Bigg\{-B_l\Phi(t,\theta_{\hat{s}}\omega)P^+\min\left\{1,  \frac{N{\rm e}^{\frac{1}{2}\mu_{m}t}{\rm e}^{\Lambda|\hat{s}|}}{\| \Phi(t,\theta_{\hat{s}}\omega)P^{+}\|}\right\}\nonumber\\
&&\hspace{2.5cm}+\chi_{\{\| \Phi(t,\theta_{\hat{s}}\omega)P^{+}\|>N{\rm e}^{\frac{1}{2}\mu_{m}t}{\rm e}^{\Lambda|\hat{s}|}\}}(\omega) \frac{N{\rm e}^{\frac{1}{2}\mu_{m}t}{\rm e}^{\Lambda|\hat{s}|}}{\| \Phi(t,\theta_{\hat{s}}\omega)P^{+}\|^3}\nonumber\\
&&\hspace{2.9cm}\cdot \Big(\sum_{i,j=1}^{d}(\Phi(t,\theta_{\hat{s}}\omega)P^+)_{ij}\sum_{k=1}^d(B_l)_{ik}(\Phi(t,\theta_{\hat{s}}\omega)P^+)_{kj}\Big)\Phi(t,\theta_{\hat{s}}\omega)P^+\Bigg\}
\end{eqnarray}
with the estimate that
\begin{equation}\label{MDesti2}
\|{\cal D}_r^l\Phi^N(t,\theta_{\hat{s}}\omega)P^+\|\leq (1+d^3)\|B\|N{\rm e}^{\frac{1}{2}\mu_{m}(t-\hat{s})}{\rm e}^{\Lambda|\hat{s}|}.
\end{equation}
\end{lem}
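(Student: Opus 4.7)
The plan is to derive the two Malliavin derivative formulas by combining the explicit form of the cocycle with a chain rule for the truncating function $\phi(M) = M\min\{1, Ng/\|M\|\}$, where $g = g(t,\hat{s})$ denotes either $e^{\mu_{m+1}t/2}e^{\Lambda|\hat{s}|}$ or $e^{\mu_{m}t/2}e^{\Lambda|\hat{s}|}$ as appropriate. First I would record the base computation: by Condition (C), $\Phi(t,\theta_{\hat{s}}\omega) = \exp\{At + \sum_{k}B_k(W^k_{t+\hat{s}}-W^k_{\hat{s}})\}$ and $P^{\pm}$ commutes with $\Phi$, so for $t \geq 0$,
\begin{equation*}
\mathcal{D}_r^l \bigl(\Phi(t,\theta_{\hat{s}}\omega)P^{-}\bigr) = B_l\, \Phi(t,\theta_{\hat{s}}\omega)P^{-}\, \chi_{[\hat{s},\,t+\hat{s}]}(r),
\end{equation*}
using $\mathcal{D}_r^l(W^k_{t+\hat{s}}-W^k_{\hat{s}}) = \delta_{kl}\chi_{[\hat{s},t+\hat{s}]}(r)$. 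For $t \leq 0$, the increment becomes $W^k_{t+\hat{s}}-W^k_{\hat{s}}$ with $t+\hat{s}\leq \hat{s}$, contributing a minus sign and an indicator $\chi_{[t+\hat{s},\hat{s}]}(r)$, which accounts for the $-B_l$ appearing in (\ref{md2}).

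Next I would apply the Lipschitz chain rule for Malliavin calculus (Nualart, Prop.~1.2.4) to $\phi$. Since the matrix-valued exponential has a smooth density, $\mathbb{P}\{\|\Phi(t,\theta_{\hat{s}}\omega)P^{-}\| = Ng\} = 0$, so $\phi$ is almost-everywhere differentiable along $\omega \mapsto \Phi P^{-}$. On the event $\{\|\Phi P^{-}\|\leq Ng\}$ we have $\Phi^N P^{-} = \Phi P^{-}$ and the derivative is just the base formula. On the event $\{\|\Phi P^{-}\| > Ng\}$, the identity $\Phi^N P^{-} = Ng\,\Phi P^{-}/\|\Phi P^{-}\|$ and the product rule give
\begin{equation*}
\mathcal{D}_r^l\bigl(\Phi^N P^{-}\bigr) = \frac{Ng}{\|\Phi P^{-}\|}\,\mathcal{D}_r^l(\Phi P^{-}) - \frac{Ng\,\Phi P^{-}}{\|\Phi P^{-}\|^3}\bigl\langle \Phi P^{-},\mathcal{D}_r^l(\Phi P^{-})\bigr\rangle_F,
\end{equation*}
where $\langle\cdot,\cdot\rangle_F$ is the Frobenius inner product. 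Substituting $\mathcal{D}_r^l(\Phi P^{-}) = B_l\Phi P^{-}\chi_{[\hat{s},t+\hat{s}]}$ and expanding $\langle \Phi P^{-}, B_l\Phi P^{-}\rangle_F = \sum_{i,j,k}(\Phi P^{-})_{ij}(B_l)_{ik}(\Phi P^{-})_{kj}$ produces the second term in (\ref{md1}), and the two events can be unified via the indicator $\chi_{\{\|\Phi P^{-}\|>Ng\}}$, yielding exactly (\ref{md1}). The case $t\leq 0$ for $P^{+}$ is verbatim with the sign change from Step 1.

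For the estimates (\ref{MDesti1})–(\ref{MDesti2}), I would argue piecewise: on $\{\|\Phi P^{-}\|\leq Ng\}$, the derivative equals $B_l \Phi P^{-}\chi_{[\hat{s},t+\hat{s}]}$, bounded in operator norm by $\|B\|\,Ng$. On $\{\|\Phi P^{-}\|>Ng\}$, the first term in (\ref{md1}) has norm $\leq \|B\|\,Ng$, while the second uses the Frobenius–operator bound $|\langle \Phi P^{-},B_l\Phi P^{-}\rangle_F| \leq d^3\|B\|\,\|\Phi P^{-}\|^2$ together with the algebraic cancellation $\|\Phi P^{-}\|^2/\|\Phi P^{-}\|^3\cdot\|\Phi P^{-}\| = 1$, yielding the bound $d^3\|B\|\,Ng$. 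Summing gives $(1+d^3)\|B\|\,Ng$, which (using $\mu_{m+1}<0$ so $e^{\mu_{m+1}t/2}\leq e^{\mu_{m+1}(t-\hat{s})/2}$ when $\hat{s}\geq 0$, and absorbing the remaining factor into $e^{\Lambda|\hat{s}|}$) implies the stated estimate. The analogous bound for $t\leq 0$ is symmetric.

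The main obstacle is the non-smoothness of $\min\{1,\cdot\}$ at its threshold, which prevents naive application of the smooth chain rule. The resolution is the Lipschitz chain rule in Malliavin calculus together with the absolute continuity of the law of $\|\Phi(t,\theta_{\hat{s}}\omega)P^{\pm}\|$; alternatively one may mollify $\min\{1,x\}$ by $\min_\varepsilon$, obtain the formula for each $\varepsilon>0$, and pass to the limit in $L^2(\Omega;L^2(\mathbb{R}))$ using the uniform bound $(1+d^3)\|B\|\,Ng$ that the approximants obey. Either route gives the stated identities and estimates.
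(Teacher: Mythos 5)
Your proposal follows essentially the same path as the paper's proof: compute $\mathcal{D}_r^l(\Phi(t,\theta_{\hat s}\omega)P^{\pm})$ from the explicit exponential form (giving $B_l\Phi P^{\pm}$ times an indicator, with sign change for $t\le 0$), invoke the Lipschitz chain rule for $\min\{1,\cdot\}$ (the paper cites Nualart's Propositions 1.2.3–1.2.4), then apply the product rule and the chain rule for the Frobenius norm. The two presentations differ only cosmetically: the paper applies the scalar Lipschitz chain rule to $\varphi(F)=\min\{1,F\}$ and then the product rule, whereas you package the full map $M\mapsto M\min\{1,Ng/\|M\|\}$ into one Lipschitz function; and you introduce the Frobenius inner product notation where the paper writes out the index sum explicitly.

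One small point worth flagging: the natural bound coming out of either computation is $(1+d^3)\|B\|N\,g(t,\hat s)=(1+d^3)\|B\|N{\rm e}^{\frac12\mu_{m+1}t}{\rm e}^{\Lambda|\hat s|}$, not the exponent ${\rm e}^{\frac12\mu_{m+1}(t-\hat s)}$ printed in (\ref{MDesti1})–(\ref{MDesti2}). You noticed this and tried to bridge it via $\mu_{m+1}<0$, but that inequality only runs the right way for $\hat s\ge0$; for $\hat s<0$ the factor ${\rm e}^{-\frac12\mu_{m+1}\hat s}={\rm e}^{\frac12\mu_{m+1}|\hat s|}<1$ makes the printed bound strictly smaller and it cannot be absorbed into ${\rm e}^{\Lambda|\hat s|}$. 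Since in the later applications (e.g.\ the term $Q_2$ in Lemma \ref{LEMBE3}) the bound is invoked with $t$ replaced by $t-\hat s$ and produces exactly ${\rm e}^{\frac12\mu_{m+1}(t-\hat s)}{\rm e}^{\Lambda|\hat s|}$, the extra $-\hat s$ in the statement of (\ref{MDesti1}) is best read as a typographical slip; your patch is unnecessary (and, as written, incomplete). With that caveat your argument is correct and matches the paper's.
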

\begin{proof}
We can calculate the Malliavin derivatives of $\Phi^N$ by the chain rule:
when $t\geq 0$, from Proposition 1.2.3 and Proposition 1.2.4 in \cite{nualart2} (or directly obtained from the proof of Proposition 2.1.10 in \cite{nualart2}), we know that $\varphi(F):=\min\{1,F\}\in \mathcal{D}^{1,2}$ if $F\in \mathcal{D}^{1,2}$, and for fixed $t$ and $s$ we have that
\begin{eqnarray}\label{340}
\mathcal{D}^l_r \min\left\{1,  \frac{N{\rm e}^{\frac{1}{2}\mu_{m+1}t}{\rm e}^{\Lambda|\hat{s}|}}{\| \Phi(t,\theta_{\hat{s}}\omega)P^{-}\|}\right\}=\chi_{\{\| \Phi(t,\theta_{\hat{s}}\omega)P^{-}\|>N{\rm e}^{\frac{1}{2}\mu_{m+1}t}{\rm e}^{\Lambda|\hat{s}|}\}}(\omega)\mathcal{D}^l_r\frac{N{\rm e}^{\frac{1}{2}\mu_{m+1}t}{\rm e}^{\Lambda|\hat{s}|}}{\| \Phi(t,\theta_{\hat{s}}\omega)P^{-}\|},
\end{eqnarray}
Thus, for $l\in \{1,2,\cdots,M\}$,
\begin{eqnarray*}
&&{\cal D}_r^l\Phi^N(t,\theta_{\hat{s}}\omega)P^-\nonumber\\
&=&
{\cal D}_r^l\big(\Phi(t,\theta_{\hat{s}}\omega)P^-\big)\min\left\{1,  \frac{N{\rm e}^{\frac{1}{2}\mu_{m+1}t}{\rm e}^{\Lambda|\hat{s}|}}{\| \Phi(t,\theta_{\hat{s}}\omega)P^{-}\|}\right\}+\Phi(t,\theta_{\hat{s}}\omega)P^-{\cal D}_r^l\min\left\{1,  \frac{N{\rm e}^{\frac{1}{2}\mu_{m+1}t}{\rm e}^{\Lambda|\hat{s}|}}{\| \Phi(t,\theta_{\hat{s}}\omega)P^{-}\|}\right\}\nonumber\\
&=&{\cal D}_r^l \left(\exp\{At+\textstyle\sum_{k=1}^MB_k\theta_{\hat{s}}(W_t)\}P^-\right)\min\left\{1,  \frac{N{\rm e}^{\frac{1}{2}\mu_{m+1}t}{\rm e}^{\Lambda|\hat{s}|}}{\| \Phi(t,\theta_{\hat{s}}\omega)P^{-}\|}\right\}\nonumber\\
&&-\chi_{\{\| \Phi(t,\theta_{\hat{s}}\omega)P^{-}\|>N{\rm e}^{\frac{1}{2}\mu_{m+1}t}{\rm e}^{\Lambda|\hat{s}|}\}}(\omega)\Phi(t,\theta_{\hat{s}}\omega)P^- \frac{N{\rm e}^{\frac{1}{2}\mu_{m+1}t}{\rm e}^{\Lambda|\hat{s}|}}{\| \Phi(t,\theta_{\hat{s}}\omega)P^{-}\|^2}{\cal D}_r^l\| \Phi(t,\theta_{\hat{s}}\omega)P^{-}\|.
\end{eqnarray*}
Note now the equivalence of the matrix norm
 $$\| \Phi(t,\theta_{\hat{s}}\omega)P^{-}\|:=\sqrt{\sum_{i,j=1}^d(\Phi(t,\theta_{\hat{s}}\omega)P^-)^2_{ij}},$$ where $(J)_{ij}$ stands for the $ij$th element of the matrix $J$, and 
$$\mathcal{D}^l_r(\Phi(t,\theta_{\hat{s}}\omega)P^-)_{ij}=\sum_{k=1}^d(B_l)_{ik}(\Phi(t,\theta_{\hat{s}}\omega)P^-)_{kj}.$$
Thus by the chain rule we have
\begin{eqnarray*}
{\cal D}_r^l\| \Phi(t,\theta_{\hat{s}}\omega)P^{-}\|&=&\frac{1}{\| \Phi(t,\theta_{\hat{s}}\omega)P^{-}\|}\sum_{i,j=1}^{d}(\Phi(t,\theta_{\hat{s}}\omega)P^-)_{ij}{\cal D}_r^l(\Phi(t,\theta_{\hat{s}}\omega)P^-)_{ij}\\
&= & \frac{1}{\| \Phi(t,\theta_{\hat{s}}\omega)P^{-}\|}\Big(\sum_{i,j=1}^{d}(\Phi(t,\theta_{\hat{s}}\omega)P^-)_{ij}\sum_{k=1}^d(B_l)_{ik}(\Phi(t,\theta_{\hat{s}}\omega)P^-)_{kj}\Big).
\end{eqnarray*}
It is easy to verify (\ref{MDesti1}). 
When $t\leq 0$, (\ref{md2}) and (\ref{MDesti2}) can be derived analogously. 
\end{proof}

Next we introduce a subset of $C^{\Lambda}_{\tau}(\mathbb{R}, L^2(\Omega,\mathbb{R}^d))$ as follows,
\begin{eqnarray*}
&&
C^{\Lambda}_{\tau}(\mathbb{R},{\cal D}^{1,2})\\
&:=&\bigg\{ f\in C^{\Lambda}_{\tau}(\mathbb{R}, L^2(\Omega,\mathbb{R}^d)):\ f|_{[0,\tau)}\in C([0,\tau),{\cal D}^{1,2}),
 \sup\limits_{\substack{t\in [0,\tau)}}\mathbb{E}\int_{\mathbb{R}}|{\cal D}_{r}^lf(t,\cdot)|^2dr<\infty,\\
&& 
\ \ \ \ \ \ \ \ \ \ \ \  \sup\limits_{\substack{t\in [0,\tau),\delta \in \mathbb{R}}} \dfrac{1}{|\delta|}\mathbb{E}\int_{\mathbb{R}}|{\cal D}_{r+\delta}^lf(t,\cdot)-{\cal D}_{r}^lf(t,\cdot)|^2dr< \infty , \ l\in \{1,\cdots, M\}
\nonumber\bigg\}.
\end{eqnarray*}

\begin{lem}\label{LEMBE3} Under the conditions of Proposition \ref{Main}, we have
$$\mathcal{M}^N(C^{\Lambda,N}_{\tau,\rho}(\mathbb{R},{\cal D}^{1,2}))\subset C^{\Lambda,N}_{\tau,\rho}(\mathbb{R},{\cal D}^{1,2}).$$  Moreover, $\mathcal{M}^N(C^{\Lambda,N}_{\tau,\rho}(\mathbb{R},{\cal D}^{1,2})|_{[0,\tau)}$ is relatively compact in $C([0,\tau), L^2( \Omega,\mathbb{R}^d))$.
\end{lem}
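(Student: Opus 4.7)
\medskip

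The plan is to first write down an explicit formula for $\mathcal{D}^l_r \mathcal{M}^N(Y^N)(t,\omega)$ via the chain rule, and then verify each defining property of the class $C^{\Lambda,N}_{\tau,\rho}(\mathbb{R},{\cal D}^{1,2})$ as well as the four hypotheses of Theorem \ref{B-S1}. Concretely, for $Y^N\in C^{\Lambda,N}_{\tau,\rho}(\mathbb{R},{\cal D}^{1,2})$ the chain rule gives
\begin{align*}
\mathcal{D}^l_r \mathcal{M}^N(Y^N)(t,\omega)
&=\int_{-\infty}^t\bigl[\mathcal{D}^l_r\Phi^N(t-\hat{s},\theta_{\hat{s}}\omega)P^-\bigr]F(\hat{s},Y^N(\hat{s},\omega))\,d\hat{s}\\
&\quad+\int_{-\infty}^t\Phi^N(t-\hat{s},\theta_{\hat{s}}\omega)P^-\nabla F(\hat{s},Y^N(\hat{s},\omega))\mathcal{D}^l_r Y^N(\hat{s},\omega)\,d\hat{s}\\
&\quad-\int_t^{+\infty}\bigl[\mathcal{D}^l_r\Phi^N(t-\hat{s},\theta_{\hat{s}}\omega)P^+\bigr]F(\hat{s},Y^N(\hat{s},\omega))\,d\hat{s}\\
&\quad-\int_t^{+\infty}\Phi^N(t-\hat{s},\theta_{\hat{s}}\omega)P^+\nabla F(\hat{s},Y^N(\hat{s},\omega))\mathcal{D}^l_r Y^N(\hat{s},\omega)\,d\hat{s}.
\end{align*}
Here the derivative of the composition $F(\hat{s},Y^N)$ uses $F\in C^3$ with $\|\nabla F\|_\infty<\infty$, and the derivative of $\Phi^N P^{\pm}$ is read off from Lemma~\ref{LEMMAMD}; the indicators $\chi_{\{\hat{s}\le r\le t\}}$ in (\ref{md1}) and $\chi_{\{t\le r\le \hat{s}\}}$ in (\ref{md2}) (after the shift $t\mapsto t-\hat{s}$) will be essential for both the tail control and the Lipschitz-in-$r$ estimate.

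Next I would show $\mathcal{M}^N$ stabilises $C^{\Lambda,N}_{\tau,\rho}(\mathbb{R},{\cal D}^{1,2})$. Continuity on $\mathbb{R}\to L^2(\Omega)$ and the $\tau$-shift covariance are already secured by Lemma~\ref{Lem1}; the new work is the Malliavin bounds. Using $\|\mathcal{D}^l_r\Phi^N(t-\hat{s},\theta_{\hat{s}}\omega)P^-\|\le (1+d^3)\|B\|N{\rm e}^{\frac{1}{2}\mu_{m+1}(t-\hat{s})}{\rm e}^{\Lambda|\hat{s}|}$ on $\{\hat{s}\le r\le t\}$ together with the analogous bound for $P^+$, the boundedness of $F$ and $\nabla F$, and the uniform bound $\sup_{\hat{s}\in[0,\tau)}\mathbb{E}\int_{\mathbb{R}}|\mathcal{D}^l_r Y^N(\hat{s},\cdot)|^2\,dr\le\rho^2$ built into the class, I would estimate each of the four summands by Cauchy--Schwarz as in Step~1(A) of the proof of Lemma~\ref{Lem1}. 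The exponents $\mu_{m+1}\pm 2\Lambda<0$ and $\mu_m\pm2\Lambda>0$ then give convergent integrals in $\hat{s}$, producing a bound of the form $C(N,\|F\|_\infty,\|\nabla F\|_\infty,\Lambda,\mu,\tau)(1+\rho)$ which, by choosing $\rho$ sufficiently large at the outset, lies inside the same ball. For the Lipschitz condition $\mathbb{E}\int_{\mathbb{R}}|\mathcal{D}^l_{r+\delta}\mathcal{M}^N(Y^N)(t)-\mathcal{D}^l_{r}\mathcal{M}^N(Y^N)(t)|^2\,dr\lesssim|\delta|$, the $r$-dependence on the right-hand side enters only through indicator functions (in the $\Phi^N$-derivative piece) and through $\mathcal{D}^l_{r+\delta}Y^N-\mathcal{D}^l_r Y^N$ (in the second piece); the former integrates to an $O(|\delta|)$ contribution from the measure of the symmetric difference of the two intervals, and the latter inherits the corresponding Lipschitz bound from the hypothesis on $Y^N$.

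For the relative compactness of $\mathcal{M}^N(C^{\Lambda,N}_{\tau,\rho}(\mathbb{R},{\cal D}^{1,2}))|_{[0,\tau)}$ in $C([0,\tau),L^2(\Omega,\mathbb{R}^d))$, I would verify the four conditions of Theorem~\ref{B-S1} uniformly in $Y^N$ ranging over $C^{\Lambda,N}_{\tau,\rho}(\mathbb{R},{\cal D}^{1,2})$. Condition (1) (uniform $\|\cdot\|_{1,2}$-bound) and condition (3i) (uniform Lipschitz of $\mathcal{D}_r$ in $r$) are precisely what was established in the previous step. Condition (2) (H\"older-$\frac12$ in $t$ of the $L^2(\Omega)$ norm) was already proved in Step~1(B) of Lemma~\ref{Lem1} through the estimates of $T_1,\dots,T_4$ via Corollary~\ref{LEMMA2C3}. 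For the tail condition (3ii), I would exploit that $\mathcal{D}^l_r \Phi^N(t-\hat{s},\theta_{\hat{s}}\omega)P^-$ is supported on $\hat{s}\le r\le t$ (and the mirror statement for $P^+$) together with the exponential factor ${\rm e}^{\frac{1}{2}\mu_{m+1}(t-\hat{s})}{\rm e}^{\Lambda|\hat{s}|}$: for $r$ very negative, only $\hat{s}\le r$ contributes, and $\int_{-\infty}^{r}{\rm e}^{(\frac{1}{2}\mu_{m+1}-\Lambda)(t-\hat{s})}d\hat{s}$ is exponentially small in $|r|$; similarly for $r\to+\infty$ in the $P^+$ piece. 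The same reasoning plus the tail property inherited from $Y^N$ handles the $\nabla F\cdot\mathcal{D}^l_r Y^N$ terms.

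The genuine obstacle is the second and fourth summands in the chain-rule formula, which re-insert $\mathcal{D}^l_r Y^N$ into the output: these terms are only controlled because the truncation in $\Phi^N$ gives the deterministic dichotomy decay ${\rm e}^{\frac{1}{2}\mu_{m+1}(t-\hat{s})}{\rm e}^{\Lambda|\hat{s}|}$ (rather than the $L^2$-blow-up of the untruncated $\Phi P^-$), which is the whole point of working at truncation level $N$. Checking that the constants appearing in all the estimates depend only on $N$, $\rho$, $\Lambda<\frac{1}{4}\mu$, $\|F\|_\infty$, $\|\nabla F\|_\infty$ and $\tau$ (and not on the particular $Y^N$) is what delivers both the self-map property and the uniform equicontinuity needed for compactness.
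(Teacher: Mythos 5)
Your plan tracks the paper's proof closely: the chain-rule expansion for $\mathcal{D}^l_r\mathcal{M}^N(Y^N)$ is exactly the paper's (\ref{343}), the estimates of the $\mathcal{D}^l_r\Phi^N P^\pm$ and $\nabla F\cdot\mathcal{D}^l_r Y^N$ pieces reproduce the paper's $L_1,\dots,L_4$ using Lemma~\ref{LEMMAMD} and the periodicity/norm-preservation of Lemma~\ref{Preserving}, the Lipschitz-in-$r$ bound splits by the position of $r$ relative to $t-\delta$ and $t$ exactly as the paper's $\hat K_1,\hat K_2,\hat K_3$, and the compactness step is the paper's invocation of Theorem~\ref{B-S1}. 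You also correctly identify the central technical point (the $\nabla F\cdot\mathcal{D}^l_r Y^N$ terms re-inserting the Malliavin derivative, tamed only by the truncation).

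The one claim that does not hold up is the assertion that ``by choosing $\rho$ sufficiently large at the outset'' the image lies inside the same ball. If the Malliavin-norm bound on $\mathcal{M}^N(Y^N)$ has the form $A+B\rho$ with $B$ the coefficient coming from $L_3,L_4$ --- in the paper, something of the order $N^2\|\nabla F\|^2_\infty e^{2\Lambda\tau}/|\mu_{m+1}(\mu_{m+1}\pm 4\Lambda)|$ --- then the self-map on the ball of radius $\rho$ requires $A+B\rho\le\rho$, i.e.\ $B<1$, and enlarging $\rho$ only helps if that condition already holds; for large $N$ it need not. In fact the class $C^{\Lambda}_{\tau}(\mathbb{R},{\cal D}^{1,2})$ as the paper actually defines it carries no radius parameter, only finiteness conditions, so the paper's Step~1 proves preservation of \emph{finiteness} rather than invariance of a fixed ball; your argument correctly establishes the bounds that yield this, but the ``large $\rho$'' justification should be dropped and replaced by the observation that the estimates are linear in the (finite) class constants of $Y^N$, which is all the paper's Step~1 asserts and all that Theorem~\ref{B-S1} then requires when applied to a sequence with uniformly bounded such constants.
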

\begin{proof}
{\it \textbf{Step 1}}: In this step we are going to present that $\mathcal{M}^N$ maps $C^{\Lambda,N}_{\tau,\rho}(\mathbb{R},{\cal D}^{1,2})$ into itself.
\begin{enumerate}
\item First we have $\mathcal{M}^N( C^{\Lambda}_{\tau}(\mathbb{R}, L^2(\Omega,\mathbb{R}^d)))\subset  C^{\Lambda}_{\tau}(\mathbb{R}, L^2(\Omega,\mathbb{R}^d))$: the argument here is the same as in {\it\textbf{Step 1}} in the proof of Lemma \ref{Lem1}.
\item  Next to illustrate that for any $t\in [0,\tau)$, $l\in \{1,\cdots, M\}$ and any $Y^N\in C^{\Lambda,N}_{\tau,\rho}(\mathbb{R},{\cal D}^{1,2})$,
\begin{equation*}
{\rm e}^{-2\Lambda |t|}\mathbb{E}\int_{\mathbb{R}}|{\cal D}_{r}^l\mathcal{M}^N(Y^N)(t,\cdot)|^2dr<+\infty.
\end{equation*} 
By the chain rule, (\ref{md1}) and (\ref{md2}), the  Malliavin derivative of ${\cal M}^N(Y^N)(t,\omega)$ is given as:
\begin{eqnarray}\label{343}
{\cal D}_{r}^l{\cal M}^N(Y^N)(t,\omega)&=&\int_{-\infty}^r\chi_{\{r\leq t\}}(r){\cal D}_{r}^l(\Phi^N_{t-\hat{s},\hat{s}}P^-)F(\hat{s}, Y^N(\hat{s},\omega))d\hat{s}\nonumber\\
&&-\int^{+\infty}_r\chi_{\{r\geq t\}}(r){\cal D}_{r}^l(\Phi^N_{t-\hat{s},\hat{s}}P^+)F(\hat{s}, Y^N(\hat{s},\omega))d\hat{s}\nonumber\\
&&+\int_{-\infty}^t\Phi^N_{t-\hat{s},\hat{s}}P^-\nabla F(\hat{s}, Y^N(\hat{s},\omega)){\cal D}_{r}^l  Y^N(\hat{s},\omega)d\hat{s} \nonumber\\
&&-\int^{+\infty}_t\Phi^N_{t-\hat{s},\hat{s}}P^+\nabla F(\hat{s}, Y^N(\hat{s},\omega)){\cal D}_{r}^l  Y^N(\hat{s},\omega)d\hat{s}.
\end{eqnarray}
Then  we get  for any $t\in \mathbb{R}$ the following $L^2$-estimation,
\begin{eqnarray*}
&&{\rm e}^{-2\Lambda |t|}\mathbb{E}\int_{\mathbb{R}}|{\cal D}_{r}^l{\cal M}^N(Y^N)(t,\cdot)|^2dr\\
&=&{\rm e}^{-2\Lambda |t|}\mathbb{E}\int_{-\infty}^{t}|{\cal D}_{r}^l{\cal M}^N(Y^N)(t,\cdot)|^2dr+{\rm e}^{-2\Lambda |t|}\mathbb{E}\int^{+\infty}_{t}|{\cal D}_{r}^l{\cal M}^N(Y^N)(t,\cdot)|^2dr\\
&\leq&3{\rm e}^{-2\Lambda |t|}\mathbb{E}\int_{-\infty}^{t}\Big|\int_{-\infty}^{r}{\cal D}_{r}^l(\Phi^N_{t-\hat{s},\hat{s}}P^-)F(\hat{s}, Y^N(\hat{s},\cdot))d\hat{s}\Big|^2dr\\
&&+3{\rm e}^{-2\Lambda |t|}\mathbb{E}\int_{t}^{+\infty}\Big|\int^{+\infty}_{r}{\cal D}_{r}^l(\Phi^N_{t-\hat{s},\hat{s}}P^+)F(\hat{s}, Y^N(\hat{s},\cdot))d\hat{s}\Big|^2dr\\
&&+3{\rm e}^{-2\Lambda |t|}\mathbb{E}\int_{\mathbb{R}}\Big|\int_{-\infty}^t\Phi^N_{t-\hat{s},\hat{s}}P^-\nabla F(\hat{s}, Y^N(\hat{s},\cdot)){\cal D}_{r}^l  Y^N(\hat{s},\cdot)d\hat{s}\Big|^2dr\\
&&+3{\rm e}^{-2\Lambda |t|}\mathbb{E}\int_{\mathbb{R}}\Big|\int^{+\infty}_t\Phi^N_{t-\hat{s},\hat{s}}P^+\nabla F(\hat{s}, Y^N(\hat{s},\cdot)){\cal D}_{r}^l  Y^N(\hat{s},\cdot)d\hat{s}\Big|^2dr\\
&=&:\sum_{i=1}^4L_i.
\end{eqnarray*}
Applying Lemma \ref{LEMMAMD}, we have that
\begin{eqnarray*}
L_1
&\leq&6\|B\|^2N^2\|F\|^2_{\infty}(1+d^3)^2{\rm e}^{-2\Lambda |t|}\int_{-\infty}^{t}\Big(\int_{-\infty}^{r}{\rm e}^{\frac{1}{2}\mu_{m+1}(t-\hat{s})}{\rm e}^{\Lambda|\hat{s}|}d\hat{s}\Big)^2dr\\
&\leq&12\|B\|^2N^2\|F\|^2_{\infty}(1+d^3)^2\int_{-\infty}^{t}{\rm e}^{(\mu_{m+1}+2\Lambda)(t-r)}\Big(\int_{-\infty}^{r}{\rm e}^{(\frac{1}{2}\mu_{m+1}+\Lambda)(r-\hat{s})}d\hat{s}\Big)^2dr\\
&&+12\|B\|^2N^2\|F\|^2_{\infty}(1+d^3)^2\int_{-\infty}^{t}{\rm e}^{(\mu_{m+1}-2\Lambda)(t-r)}\Big(\int_{-\infty}^{r}{\rm e}^{(\frac{1}{2}\mu_{m+1}-\Lambda)(r-\hat{s})}d\hat{s}\Big)^2dr\\
&\leq &48\|B\|^2N^2\|F\|^2_{\infty}(1+d^3)^2\Big\{\dfrac{1}{|\mu_{m+1}+2\Lambda|^3}+\dfrac{1}{|\mu_{m+1}-2\Lambda|^3}\Big\}<\infty.
\end{eqnarray*}
Similarly,
\begin{eqnarray*}
L_2
&\leq&48\|B\|^2N^2\|F\|^2_{\infty}(1+d^3)^2\Big(\dfrac{1}{|\mu_{m}+2\Lambda|^3}+\dfrac{1}{|\mu_{m}-2\Lambda|^3}\Big)<\infty.
\end{eqnarray*}
As for terms $L_3$ and $L_4$, we have 
\begin{eqnarray*}
L_3
&\leq &3N^2\|\nabla F\|^2_{\infty}{\rm e}^{-2\Lambda |t|}\mathbb{E}\int_{\mathbb{R}}\Big(\int_{-\infty}^t{\rm e}^{\Lambda|\hat{s}|}{\rm e}^{\frac{1}{2}\mu_{m+1}(t-\hat{s})}|{\cal D}_{r}^l Y^N(\hat{s},\cdot)|d\hat{s}\Big)^2dr\\
&\leq &12N^2\|\nabla F\|^2_{\infty}\left(\frac{1}{|\mu_{m+1}-4\Lambda|}+\frac{1}{|\mu_{m+1}+4\Lambda|}\right)\int_{-\infty}^t{\rm e}^{\frac{1}{2}\mu_{m+1}(t-\hat{s})}\mathbb{E}\int_{\mathbb{R}}|{\cal D}_{r}^l Y^N(\hat{s},\cdot)|^2drd\hat{s}
\end{eqnarray*}
Note that $\mathbb{E}\int_{\mathbb{R}}|{\cal D}_{r}^l Y^N(\hat{s},\cdot)|^2dr$ is nonegative and periodic with period $\tau$, i.e.,
\begin{eqnarray*}
\mathbb{E}\int_{\mathbb{R}}|{\cal D}_{r}^l Y^N(\hat{s}+\tau,\cdot)|^2dr=\mathbb{E}\int_{\mathbb{R}}|{\cal D}_{r}^l Y^N(\hat{s},\theta_{\tau}\cdot)|^2dr=\mathbb{E}\int_{\mathbb{R}}|{\cal D}_{r}^l Y^N(\hat{s},\cdot)|^2dr,
\end{eqnarray*}
where the right equality is true according to Lemma \ref{Preserving}. Then we have
\begin{eqnarray*}
\int_{-\infty}^t{\rm e}^{\frac{1}{2}\mu_{m+1}(t-\hat{s})}\mathbb{E}\int_{\mathbb{R}}|{\cal D}_{r}^l Y^N(\hat{s},\cdot)|^2drd\hat{s}
&\leq & \sup\limits_{\substack{s\in [0,\tau)}}\mathbb{E}\int_{\mathbb{R}}|{\cal D}_{r}^lY^N(s,\cdot)|^2dr \int_{-\infty}^t{\rm e}^{\frac{1}{2}\mu_{m+1}(t-\hat{s})}d\hat{s}\\
&\leq & \frac{2{\rm e}^{2\Lambda \tau}}{|\mu_{m+1}|}\sup\limits_{\substack{s\in [0,\tau)}}\  {\rm e}^{-2\Lambda|s|}\mathbb{E}\int_{\mathbb{R}}|{\cal D}_{r}^lY^N(s,\cdot)|^2dr.
\end{eqnarray*}
Thus,
\begin{eqnarray*}
L_3&\leq & \Big(\frac{24N^2\|\nabla F\|^2_{\infty}{\rm e}^{2\Lambda \tau}}{|\mu_{m+1}(\mu_{m+1}-4\Lambda)|}+\frac{24N^2\|\nabla F\|^2_{\infty}{\rm e}^{2\Lambda \tau}}{|\mu_{m+1}(\mu_{m+1}+4\Lambda)|}\Big)\sup\limits_{\substack{s\in [0,\tau)}}\  {\rm e}^{-2\Lambda|s|}\mathbb{E}\int_{\mathbb{R}}|{\cal D}_{r}^lY^N(s,\cdot)|^2dr <\infty.
\end{eqnarray*}
Similarly,
\begin{eqnarray*}
L_4
&\leq&  \Big(\frac{24N^2\|\nabla F\|^2_{\infty}{\rm e}^{2\Lambda \tau}}{\mu_m(\mu_{m}-4\Lambda)}+\frac{24N^2\|\nabla F\|^2_{\infty}{\rm e}^{2\Lambda \tau}}{\mu_m(\mu_{m}+4\Lambda)}\Big)\sup\limits_{\substack{s\in [0,\tau)}}\  {\rm e}^{-2\Lambda|s|}\mathbb{E}\int_{\mathbb{R}}|{\cal D}_{r}^lY^N(s,\cdot)|^2dr<\infty.
\end{eqnarray*}

\item It remains to show that for any $l\in \{1,\cdots,M\}$ and $\delta\in \mathbb{R}$,
$$\sup\limits_{\substack{t\in [0,\tau)}}\dfrac{{\rm e}^{-2\Lambda |t|}}{|\delta|}\int_{\mathbb{R}}\mathbb{E}|{\cal D}_{r+\delta}^l\mathcal{M}^N(Y^N)(t,\cdot)-{\cal D}_{r}^l\mathcal{M}^N(Y^N)(t,\cdot)|^2dr< \infty.$$

In fact the left hand side of the  above can be separated into three integrals, 
\begin{eqnarray}\label{NJ}
&&\sup\limits_{\substack{t\in [0,\tau),\delta\in \mathbb{R}}}\dfrac{{\rm e}^{-2\Lambda |t|}}{|\delta|} \int_{\mathbb{R}}\mathbb{E}|{\cal D}_{r+\delta}^l{\cal M}^N(Y^N)(t,\cdot)-{\cal D}_{r}^l{\cal M}^N(Y^N)(t,\cdot)|^2dr\nonumber\\
=&& \sup\limits_{\substack{t\in [0,\tau),\delta\in \mathbb{R}}}\dfrac{{\rm e}^{-2\Lambda |t|}}{|\delta|} \mathbb{E} \int_{-\infty}^{t-\delta}|{\cal D}_{r+\delta}^l{\cal M}^N(Y^N)(t,\cdot)-{\cal D}_{r}^l{\cal M}^N(Y^N)(t,\cdot)|^2dr\nonumber\\
&&+\sup\limits_{\substack{t\in [0,\tau),\delta\in \mathbb{R}}}\dfrac{{\rm e}^{-2\Lambda |t|}}{|\delta|} \mathbb{E} \int_{t-\delta}^{t}|{\cal D}_{r+\delta}^l{\cal M}^N(Y^N)(t,\cdot)-{\cal D}_{r}^l{\cal M}^N(Y^N)(t,\cdot)|^2dr\nonumber\\
&&+\sup\limits_{\substack{t\in [0,\tau),\delta\in \mathbb{R}}}\dfrac{{\rm e}^{-2\Lambda |t|}}{|\delta|} \mathbb{E}\int_{t}^{+\infty}|{\cal D}_{r+\delta}^l{\cal M}^N(Y^N)(t,\cdot)-{\cal D}_{r}^l{\cal M}^N(Y^N)(t,\cdot)|^2dr\nonumber\\
:=&&\hat{K}_1+\hat{K}_2+\hat{K}_3.
\end{eqnarray}
To estimate $\hat{K}_1$ in (\ref{NJ}),  note when $r\leq t-\delta$, by (\ref{343}) we have 
\begin{eqnarray*}
\mathcal{D}^l_r {\cal M}^N(Y^N) (t,\omega)&=&\int^{r}_{-\infty}\mathcal{D}^l_r(\Phi^N_{t-\hat{s},\hat{s}}P^-) F(\hat{s},Y^N (\hat{s},\omega))d\hat{s}\\
&&+\int_{-\infty}^t\Phi^N_{t-\hat{s},\hat{s}}P^-\nabla F(\hat{s}, Y^N(\hat{s},\omega)){\cal D}_{r}^l Y^N(\hat{s},\omega)d\hat{s}\\
&&-\int^{+\infty}_t\Phi^N_{t-\hat{s},\hat{s}}P^+\nabla F(\hat{s}, Y^N(\hat{s},\omega)){\cal D}_{r}^l Y^N(\hat{s},\omega)d\hat{s},
\end{eqnarray*}
and
\begin{eqnarray*}
\mathcal{D}^l_{r+\delta} {\cal M}^N(Y^N) (t,\omega)&=&\int^{r+\delta}_{-\infty}\mathcal{D}^l_{r+\delta}(\Phi^N_{t-\hat{s},\hat{s}}P^-) F(\hat{s},Y^N (\hat{s},\omega))d\hat{s}\\
&&+\int_{-\infty}^t\Phi^N_{t-\hat{s},\hat{s}}P^-\nabla F(\hat{s}, Y^N(\hat{s},\omega)){\cal D}_{r+\delta}^l Y^N(\hat{s},\omega)d\hat{s}\\
&&-\int^{+\infty}_t\Phi^N_{t-\hat{s},\hat{s}}P^+\nabla F(\hat{s}, Y^N(\hat{s},\omega)){\cal D}_{r+\delta}^l Y^N(\hat{s},\omega)d\hat{s}.
\end{eqnarray*}
Thus
\begin{eqnarray*}
\hat{K}_1
&\leq & \sup_{\substack{t\in [0,\tau),\delta\in \mathbb{R}}}\dfrac{3{\rm e}^{-2\Lambda |t|}}{|\delta|} \mathbb{E}  \int_{-\infty}^{t-\delta}\bigg\{\Big|\int_{-\infty}^t \Phi_{t-\hat{s},\hat{s}}^NP^-\nabla F(\hat{s},Y^N(\hat{s},\cdot))({\cal D}_{r+\delta}^l-{\cal D}_{r}^l)(Y^N)(t,\cdot)d\hat{s}\Big|^2\\
&&+\Big|\int_{-\infty}^{r+\delta} {\cal D}_{r+\delta}^l\Phi_{t-\hat{s},\hat{s}}^NP^-F(\hat{s},Y^N(\hat{s},\cdot))d\hat{s}-\int_{-\infty}^{r} {\cal D}_{r}^l\Phi_{t-\hat{s},\hat{s}}^NP^-F(\hat{s},Y^N(\hat{s},\cdot))d\hat{s}\Big|^2\\
&&+\Big|\int^{+\infty}_t \Phi_{t-\hat{s},\hat{s}}^NP^+\nabla F(\hat{s},Y^N(\hat{s},\cdot))({\cal D}_{r+\delta}^l-{\cal D}_{r}^l)(Y^N)(t,\cdot)d\hat{s}\Big|^2\bigg\}dr\\
&:=&\sup_{\substack{t\in [0,\tau),\delta\in \mathbb{R}}}\sum_{i=1}^{3}Q_i.
\end{eqnarray*}
First note that  $Q_1$ is bounded via measure preserving result in Lemma \ref{Preserving},
\begin{eqnarray*}
&&Q_1\\
&\leq &\dfrac{6N^2\|\nabla F\|^2_{\infty}}{|\delta|}{\rm e}^{-2\Lambda |t|}\mathbb{E}\int_{\mathbb{R}}\Big(\int_{-\infty}^t {\rm e}^{\Lambda \hat{s}}{\rm e}^{\frac{1}{2}\mu_{m+1}(t-\hat{s})} |({\cal D}_{r+\delta}^l-{\cal D}_{r}^l)(Y^N)(\hat{s},\cdot)|d\hat{s}\Big)^2dr\\
&&+\dfrac{6N^2\|\nabla F\|^2_{\infty}}{|\delta|}{\rm e}^{-2\Lambda |t|}\mathbb{E}\int_{\mathbb{R}}\Big(\int_{-\infty}^t {\rm e}^{-\Lambda \hat{s}}{\rm e}^{\frac{1}{2}\mu_{m+1}(t-\hat{s})} |({\cal D}_{r+\delta}^l-{\cal D}_{r}^l)(Y^N)(\hat{s},\cdot)|d\hat{s}\Big)^2dr\\
&\leq &\Big(\frac{12N^2\|\nabla F\|^2_{\infty}}{|\delta||\mu_{m+1}+4\Lambda|}+\frac{12N^2\|\nabla F\|^2_{\infty}}{|\delta||\mu_{m+1}-4\Lambda|}\Big)\mathbb{E}\int_{\mathbb{R}}\int_{-\infty}^t {\rm e}^{\frac{1}{2}\mu_{m+1}(t-\hat{s})} |({\cal D}_{r+\delta}^l-{\cal D}_{r}^l)(Y^N)(\hat{s},\cdot)|^2d\hat{s}dr
\end{eqnarray*}
Note that $\mathbb{E}\int_{\mathbb{R}}|({\cal D}_{r+\delta}^l-{\cal D}_{r}^l)(Y^N)(\hat{s},\cdot)|^2dr$ is a nonegative periodic function in $C^{\Lambda}(\mathbb{R})$ with period $\tau$. Thus we have
\begin{eqnarray*}
&&\int_{-\infty}^t{\rm e}^{\frac{1}{2}\mu_{m+1}(t-\hat{s})}\mathbb{E}\int_{\mathbb{R}}|({\cal D}_{r+\delta}^l-{\cal D}_{r}^l)(Y^N)(\hat{s},\cdot)|^2drd\hat{s}\\
&\leq &  \sup_{s\in[0,\tau)}\mathbb{E}\int_{\mathbb{R}}|({\cal D}_{r+\delta}^l-{\cal D}_{r}^l)(Y^N)(\hat{s},\cdot)|^2dr\int_{-\infty}^t{\rm e}^{\frac{1}{2}\mu_{m+1}(t-\hat{s})}d\hat{s}\\
&\leq &\frac{2{\rm e}^{2\Lambda\tau}}{|\mu_{m+1}|} \sup_{s\in[0,\tau)}{\rm e}^{-2\Lambda|s|}\mathbb{E}\int_{\mathbb{R}}|({\cal D}_{r+\delta}^l-{\cal D}_{r}^l)(Y^N)(\hat{s},\cdot)|^2dr.
\end{eqnarray*}
This leads to
\begin{eqnarray*}
Q_1 &\leq& {\rm e}^{2\Lambda \tau}\frac{24N^2\|\nabla F\|^2_{\infty}}{|\mu_{m+1}|}\Big(\frac{1}{|\mu_{m+1}+4\Lambda|}+\frac{1}{|\mu_{m+1}-4\Lambda|}\Big) \\
&&\cdot\sup_{\hat{s}\in [0,\tau),\delta\in \mathbb{R}}\dfrac{{\rm e}^{-2\Lambda|\hat{s}|}}{|\delta|}\mathbb{E}\int_{\mathbb{R}} |({\cal D}_{r+\delta}^l-{\cal D}_{r}^l)(Y^N)(\hat{s},\cdot)|^2dr<\infty.
\end{eqnarray*}
Analogously, 
\begin{eqnarray*}
Q_{3}
&\leq &  24{\rm e}^{2\Lambda \tau}N^2\|\nabla F\|^2_{\infty}\Big\{\frac{1}{\mu_m(\mu_{m}+4\Lambda)}+\frac{1}{\mu_m(\mu_{m}-4\Lambda)}\Big\}\\
&&\cdot\sup_{\hat{s}\in [0,\tau),\delta\in \mathbb{R}}\dfrac{{\rm e}^{-2\Lambda|\hat{s}|}}{|\delta|}\mathbb{E}\int_{\mathbb{R}} |({\cal D}_{r+\delta}^l-{\cal D}_{r}^l)(Y^N)(\hat{s},\cdot)|^2dr<\infty.
\end{eqnarray*}
Secondly, $Q_2$ can be estimated using (\ref{MDesti1}),
\begin{eqnarray*}
Q_2
&\leq &\dfrac{3{\rm e}^{-2\Lambda |t|}\|F\|^2_{\infty}}{|\delta|}  \int_{-\infty}^{t-\delta}\mathbb{E}\Big(\int_{r}^{r+\delta} \|{\cal D}_{r+\delta}^l\Phi_{t-\hat{s},\hat{s}}^NP^-\|d\hat{s}\Big)^2dr\\
&\leq &3N^2{\rm e}^{-2\Lambda |t|}\|F\|^2_{\infty}\|B_l\|(1+2d^3)^2 \dfrac{1}{|\delta|} \int_{-\infty}^{t-\delta}\Big(\int_{r}^{r+\delta}{\rm e}^{\frac{1}{2}\mu_{m+1}(t-\hat{s})}{\rm e}^{\Lambda|\hat{s}|}d\hat{s}\Big)^2dr \\ 
&\leq &6N^2\|F\|^2_{\infty}\|B_l\|(1+2d^3)^2 \int_{-\infty}^{t-\delta}{\rm e}^{(\mu_{m+1}-2\Lambda)(t-\delta-r)}\int_{r}^{r+\delta}{\rm e}^{(\frac{1}{2}\mu_{m+1}-\Lambda)(r+\delta-\hat{s})}d\hat{s}dr \\
&&+6N^2\|F\|^2_{\infty}\|B_l\|(1+2d^3)^2 \int_{-\infty}^{t-\delta}{\rm e}^{(\mu_{m+1}+2\Lambda)(t-\delta-r)}\int_{r}^{r+\delta}{\rm e}^{(\frac{1}{2}\mu_{m+1}+\Lambda)(r+\delta-\hat{s})}d\hat{s}dr \\ 
&\leq &12N^2\|F\|^2_{\infty}\|B_l\|(1+2d^3)^2 \Big\{\frac{1}{(\mu_{m+1}+2\Lambda)^2}+\frac{1}{(\mu_{m+1}-2\Lambda)^2}\Big\} 
<\infty.
\end{eqnarray*}
Thus $\hat{K}_1< \infty$.
To consider $\hat{K}_2$ in (\ref{NJ}), note that when $r \leq t\leq r+\delta$, the expressions (\ref{343}) gives us 
\begin{eqnarray*}
\mathcal{D}^l_r {\cal M}^N(Y^N) (t,\omega)&=&\int^{r}_{-\infty}\mathcal{D}^l_r(\Phi^N_{t-\hat{s},\hat{s}}P^-) F(\hat{s},Y^N (\hat{s},\omega))d\hat{s}\\
&&+\int_{-\infty}^t\Phi^N_{t-\hat{s},\hat{s}}P^-\nabla F(\hat{s}, Y^N(\hat{s},\omega)){\cal D}_{r}^l Y^N(\hat{s},\omega)d\hat{s}\\
&&-\int^{+\infty}_t\Phi^N_{t-\hat{s},\hat{s}}P^+\nabla F(\hat{s}, Y^N(\hat{s},\omega)){\cal D}_{r}^l Y^N(\hat{s},\omega)d\hat{s},
\end{eqnarray*}
and
\begin{eqnarray*}
\mathcal{D}^l_{r+\delta} {\cal M}^N(Y^N) (t,\omega)&=&\int_{r+\delta}^{+\infty}\mathcal{D}^l_{r+\delta}(\Phi^N_{t-\hat{s},\hat{s}}P^+) F(\hat{s},Y^N (\hat{s},\omega))d\hat{s}\\
&&+\int_{-\infty}^t\Phi^N_{t-\hat{s},\hat{s}}P^-\nabla F(\hat{s}, Y^N(\hat{s},\omega)){\cal D}_{r+\delta}^l Y^N(\hat{s},\omega)d\hat{s}\\
&&-\int^{+\infty}_t\Phi^N_{t-\hat{s},\hat{s}}P^+\nabla F(\hat{s}, Y^N(\hat{s},\omega)){\cal D}_{r+\delta}^l Y^N(\hat{s},\omega)d\hat{s}.
\end{eqnarray*}
Thus
\begin{eqnarray*}
\hat{K}_2&=&\sup\limits_{\substack{t\in [0,\tau),\delta\in \mathbb{R}}}\dfrac{{\rm e}^{-2\Lambda |t|}}{|\delta|}  \mathbb{E}\int_{t-\delta}^{t}|{\cal D}_{r+\delta}^l{\cal M}^N(Y^N)(t,\cdot)-{\cal D}_{r}^l{\cal M}^N(Y^N)(t,\cdot)|^2dr\\
&\leq &\sup\limits_{\substack{t\in [0,\tau),\delta\in \mathbb{R}}}\dfrac{4{\rm e}^{-2\Lambda |t|}}{|\delta|}  \mathbb{E} \int_{t-\delta}^{t}
 \Big|\int_{-\infty}^{r} {\cal D}_{r}^l\Phi_{t-\hat{s},\hat{s}}^NP^-F(\hat{s},Y^N(\hat{s},\cdot))d\hat{s}\Big|^2dr\\
 &&+\sup\limits_{\substack{t\in [0,\tau),\delta\in \mathbb{R}}}\dfrac{4{\rm e}^{-2\Lambda |t|}}{|\delta|}  \mathbb{E}\int_{t-\delta}^{t}\Big|\int^{+\infty}_{r+\delta} {\cal D}_{r+\delta}^l\Phi_{t-\hat{s},\hat{s}}^NP^+F(\hat{s},Y^N(\hat{s},\cdot))d\hat{s}\Big|^2dr\\
&&+\sup\limits_{\substack{t\in [0,\tau),\delta\in \mathbb{R}}}\dfrac{4{\rm e}^{-2\Lambda |t|}}{|\delta|}  \mathbb{E}\int_{t-\delta}^{t}\Big|\int_{-\infty}^t \Phi_{t-\hat{s},\hat{s}}^NP^-\nabla F(\hat{s},Y^N(\hat{s},\cdot))({\cal D}_{r+\delta}^l-{\cal D}_{r}^l)(Y^N)(\hat{s},\cdot)d\hat{s}\Big|^2dr\\
&&+\sup\limits_{\substack{t\in [0,\tau),\delta\in \mathbb{R}}}\dfrac{4{\rm e}^{-2\Lambda |t|}}{|\delta|}  \mathbb{E}\int_{t-\delta}^{t}\Big|\int^{+\infty}_t \Phi_{t-\hat{s},\hat{s}}^NP^+\nabla F(\hat{s},Y^N(\hat{s},\cdot))({\cal D}_{r+\delta}^l-{\cal D}_{r}^l)(Y^N)(\hat{s},\cdot)d\hat{s}\Big|^2dr\\
&:=&\sup_{\substack{t\in [0,\tau),\delta\in \mathbb{R}}}\sum_{i=4}^{7}Q_i.
\end{eqnarray*}
But
\begin{eqnarray*}
Q_{4}
&\leq & \dfrac{4\|F\|^2_{\infty}}{|\delta|} {\rm e}^{-2\Lambda |t|} \int_{t-\delta}^{t}\mathbb{E}\Big(\int_{-\infty}^{r} \|{\cal D}_{r}^l\Phi_{t-\hat{s},\hat{s}}^NP^-\|d\hat{s}\Big)^2dr\\
&\leq & 32\|F\|^2_{\infty}\|B_l\|^2(1+2d^3)\Big( \dfrac{1}{|\mu_{m+1}-2\Lambda|^2}+ \dfrac{1}{|\mu_{m+1}+2\Lambda|^2}\Big)<\infty.
\end{eqnarray*}
Similarly,
\begin{eqnarray*}
Q_{5}
&\leq & 32\|F\|^2_{\infty}\|B_l\|^2(1+2d^3)\Big( \dfrac{1}{|\mu_{m}-2\Lambda|^2}+ \dfrac{1}{|\mu_{m}+2\Lambda|^2}\Big)
<\infty.
\end{eqnarray*}
Besides, we have by similar calculations as in $Q_1$ and $Q_2$,
\begin{eqnarray*}
Q_6
&\leq & {\rm e}^{2\Lambda \tau}\frac{32N^2\|\nabla F\|^2_{\infty}}{|\mu_{m+1}|}\Big(\frac{1}{|\mu_{m+1}+4\Lambda|}+\frac{1}{|\mu_{m+1}-4\Lambda|}\Big) \\
&&\cdot\sup_{\hat{s}\in [0,\tau),\delta\in \mathbb{R}}\dfrac{{\rm e}^{-2\Lambda|\hat{s}|}}{|\delta|}\mathbb{E}\int_{\mathbb{R}} |({\cal D}_{r+\delta}^l-{\cal D}_{r}^l)(Y^N)(\hat{s},\cdot)|^2dr<\infty,
\end{eqnarray*}
and 
\begin{eqnarray*}
Q_{7}&:=&\dfrac{4{\rm e}^{-2\Lambda |t|}}{|\delta|}  \mathbb{E}\int_{t-\delta}^{t}\Big|\int^{+\infty}_t \Phi_{t-\hat{s},\hat{s}}^NP^+\nabla F(\hat{s},Y^N(\hat{s},\cdot))({\cal D}_{r+\delta}^l-{\cal D}_{r}^l)(Y^N)(\hat{s},\cdot)d\hat{s}\Big|^2dr\\
&\leq & 32{\rm e}^{2\Lambda \tau}N^2\|\nabla F\|^2_{\infty}\Big(\frac{1}{|\mu_m(\mu_{m}+4\Lambda)|}+\frac{1}{|\mu_m(\mu_{m}-4\Lambda)|}\Big) \\
&&\cdot\sup_{\hat{s}\in [0,\tau),\delta\in \mathbb{R}}\dfrac{{\rm e}^{-2\Lambda|\hat{s}|}}{|\delta|}\mathbb{E}\int_{\mathbb{R}} |({\cal D}_{r+\delta}^l-{\cal D}_{r}^l)(Y^N)(\hat{s},\cdot)|^2dr\\
&<&\infty.
\end{eqnarray*}
Now we have shown that $\hat{K}_2<\infty$.
To consider $\hat{K}_3$, note that when $r \geq t$, (\ref{343}) gives
\begin{eqnarray*}
\mathcal{D}^l_r {\cal M}^N(Y^N) (t,\omega)&=&\int_{r}^{+\infty}\mathcal{D}^l_{r}(\Phi^N_{t-\hat{s},\hat{s}}P^+) F(\hat{s},Y^N (\hat{s},\omega))d\hat{s}\\
&&+\int_{-\infty}^t\Phi^N_{t-\hat{s},\hat{s}}P^-\nabla F(\hat{s}, Y^N(\hat{s},\omega)){\cal D}_{r}^l Y^N(\hat{s},\omega)d\hat{s}\\
&&-\int^{+\infty}_t\Phi^N_{t-\hat{s},\hat{s}}P^+\nabla F(\hat{s}, Y^N(\hat{s},\omega)){\cal D}_{r}^l Y^N(\hat{s},\omega)d\hat{s},
\end{eqnarray*}
and
\begin{eqnarray*}
\mathcal{D}^l_{r+\delta} {\cal M}^N(Y^N) (t,\omega)&=&\int_{r+\delta}^{+\infty}\mathcal{D}^l_{r+\delta}(\Phi^N_{t-\hat{s},\hat{s}}P^+) F(\hat{s},Y^N (\hat{s},\omega))d\hat{s}\\
&&+\int_{-\infty}^t\Phi^N_{t-\hat{s},\hat{s}}P^-\nabla F(\hat{s}, Y^N(\hat{s},\omega)){\cal D}_{r+\delta}^l Y^N(\hat{s},\omega)d\hat{s}\\
&&-\int^{+\infty}_t\Phi^N_{t-\hat{s},\hat{s}}P^+\nabla F(\hat{s}, Y^N(\hat{s},\omega)){\cal D}_{r+\delta}^l Y^N(\hat{s},\omega)d\hat{s}.
\end{eqnarray*}
Then
\begin{eqnarray*}
\hat{K}_3&=&\sup\limits_{\substack{t\in [0,\tau),\delta\in \mathbb{R}}}\dfrac{{\rm e}^{-2\Lambda |t|}}{|\delta|}\mathbb{E} \int_{t}^{+\infty}|{\cal D}_{r+\delta}^l{\cal M}^N(Y^N)(t,\cdot)-{\cal D}_{r}^l{\cal M}^N(Y^N)(t,\cdot)|^2dr\\
&\leq &\sup\limits_{\substack{t\in [0,\tau),\delta\in \mathbb{R}}}\dfrac{3{\rm e}^{-2\Lambda |t|}}{|\delta|}\mathbb{E} \int_{t}^{+\infty}\bigg\{\Big|\int_{-\infty}^t \Phi_{t-\hat{s},\hat{s}}^NP^-\nabla F(\hat{s},Y^N(\hat{s},\cdot))({\cal D}_{r+\delta}^l-{\cal D}_{r}^l)(Y^N)(\hat{s},\cdot)d\hat{s}\Big|^2\\
&&+\Big|\int^{+\infty}_{r+\delta_r} {\cal D}_{r+\delta}^l\Phi_{t-\hat{s},\hat{s}}^NP^+F(\hat{s},Y^N(\hat{s},\cdot))d\hat{s}-\int^{+\infty}_{r} {\cal D}_{r}^l\Phi_{t-\hat{s},\hat{s}}^NP^+F(\hat{s},Y^N(\hat{s},\cdot))d\hat{s}\Big|^2\\
&&+\Big|\int^{+\infty}_t \Phi_{t-\hat{s},\hat{s}}^NP^+\nabla F(\hat{s},Y^N(\hat{s},\cdot))({\cal D}_{r+\delta}^l-{\cal D}_{r}^l)(Y^N)(\hat{s},\cdot)d\hat{s}\Big|^2\bigg\}dr\\
&=&\sup_{\substack{t\in \mathbb{R},\delta\in \mathbb{R}}}\sum_{i=8}^{10}Q_i.
\end{eqnarray*}
Now it is easy to see that,
\begin{eqnarray*}
Q_8
&\leq & {\rm e}^{2\Lambda \tau}\frac{24N^2\|\nabla F\|^2_{\infty}}{|\mu_{m+1}|}\Big(\frac{1}{|\mu_{m+1}+4\Lambda|}+\frac{1}{|\mu_{m+1}-4\Lambda|}\Big)\\
&&\cdot\sup_{\hat{s}\in [0,\tau),\delta\in \mathbb{R}}\dfrac{{\rm e}^{-2\Lambda|\hat{s}|}}{|\delta|}\mathbb{E}\int_{\mathbb{R}} |({\cal D}_{r+\delta}^l-{\cal D}_{r}^l)(Y^N)(\hat{s},\cdot)|^2dr
<\infty,
\end{eqnarray*}
and
\begin{eqnarray*}
Q_{10}
&\leq &24{\rm e}^{2\Lambda \tau}N^2\|\nabla F\|^2_{\infty}\Big(\frac{1}{|\mu_m(\mu_{m}+4\Lambda)|}+\frac{1}{|\mu_m(\mu_{m}-4\Lambda)|}\Big) \\
&&\cdot\sup_{\hat{s}\in [0,\tau),\delta\in \mathbb{R}}\dfrac{{\rm e}^{-2\Lambda|\hat{s}|}}{|\delta|}\mathbb{E}\int_{\mathbb{R}} |({\cal D}_{r+\delta}^l-{\cal D}_{r}^l)(Y^N)(\hat{s},\cdot)|^2dr
<\infty.
\end{eqnarray*}
Similar to $Q_2$,
\begin{eqnarray*}
Q_{9}
\leq 24N^2\|F\|^2_{\infty}\|B_l\|(1+2d^3)^2 \Big\{\frac{1}{(\mu_{m}+2\Lambda)^2}+\frac{1}{(\mu_{m}-2\Lambda)^2}\Big\} <\infty.
\end{eqnarray*}
In summary, we have shown that
$$\sup\limits_{\substack{t\in [0,\tau),\delta\in \mathbb{R}}}\dfrac{{\rm e}^{-2\Lambda |t|}}{|\delta|}\int_{\mathbb{R}}\mathbb{E}|{\cal D}_{r+\delta}^l\mathcal{M}^N(Y^N)(t,\cdot)-{\cal D}_{r}^l\mathcal{M}^N(Y^N)(t,\cdot)|^2dr< \infty.$$
 \end{enumerate}
Thus we could conclude that
$\mathcal{M}^N$ maps $C^{\Lambda}_{\tau}(\mathbb{R},{\cal D}^{1,2})$ into itself.
\vskip3pt

{\it \textbf{Step 2}}: Now we can prove that for each $N\in \mathbb{N}$, $\mathcal{M}^N(C^{\Lambda}_{\tau}(\mathbb{R},{\cal D}^{1,2}))|_{[0,\tau)}$ is relatively compact in $C([0,\tau), L^2(\Omega,\mathbb{R}^d))$.
Applying Theorem \ref{B-S1} and what we have proved in {\it \textbf{Step 1}},  we conclude that that for any sequence $\{{\cal M}^N(f_{n})\}_{n\in \mathbb{N}}\in
C^{\Lambda}_{\tau}(\mathbb{R},{\cal D}^{1,2})|_{[0,\tau)}$, there exists a
subsequence, still denoted by $\{{\cal M}^N(f_{n})\}_{n\in \mathbb{N}}$ and $V^{N}\in
C([0,\tau), L^2(\Omega,\mathbb{R}^d))$ such that
\begin{eqnarray}\label{345}
\sup\limits_{t\in [0,\tau)}\mathbb{E}|{\cal M}^N(f_{n})(t,\cdot)-V^{N}(t,\cdot)|^2\to 0
\end{eqnarray}
as $n\to \infty$.  
\end{proof}

\begin{rmk} Note that in Theorem \ref{B-S1}, the relative compactness criterion
allows us to apply $L^2(\Omega,\mathbb{R}^d))$-valued functions only on a finite time interval. But we can push it to the whole 
real line by the random periodicity.
\end{rmk}

\begin{proof} [Proof of Proposition \ref{Main}]
 We prove for any fixed $N$, ${\cal M}^N(C^{\Lambda}_{\tau}(\mathbb{R},{\cal D}^{1,2}))$ is relatively compact in $C^{\Lambda}_\tau(\mathbb{R}, L^2(\Omega,\mathbb{R}^d))$.
Due to the relative compactness in $C([0,\tau), L^2(\Omega,\mathbb{R}^d))$, we are able to find a subsequence, denoted by $\{{\cal M}^N(Y^N_{n_j})\}_{j\in \mathbb{N}}$, from an arbitrary sequence $\{{\cal M}^N(Y^N_{n})\}_{n\in \mathbb{N}}$ such that it will converge to the accumulation point $V^N$, in the norm shown in Eqn. (\ref{345}). Now define for any $t\in [m\tau, m\tau+\tau)$, 
$$V^{N}(t,\omega)=V^{N}(t-m\tau,\theta_{m\tau}\omega).$$ 
By the construction, we can see as $t+\tau\in [(m+1)\tau, (m+2)\tau)$, so
$$V^{N}(t+\tau,\omega)=V^{N}(t+\tau-(m+1)\tau,\theta_{(m+1)\tau}\omega)=V^{N}(t-m\tau,\theta_{m\tau}\theta_\tau\omega)=V^{N}(t,\theta_\tau\omega).$$
Note that
\begin{eqnarray*}
{\cal M}^N(Y^{N}_{n_j})(t,\theta_{m\tau}\omega)={\cal M}^N(Y^{N}_{n_j})(t+m\tau,\omega).
\end{eqnarray*}
With (\ref{345}), the periodic property of ${\cal M}^N(Y^{N}_{n_j})$, and the probability preserving of $\theta_{m\tau}$, we obtain
\begin{eqnarray*}
&&\sup\limits_{t\in [m\tau, m\tau+\tau)}{\rm e}^{-2\Lambda|t|}\mathbb{E}|{\cal M}^N(Y^{N}_{n_j})(t,\cdot)-V^{N}(t,\cdot)|^2\\
&\leq &\sup\limits_{t\in [0,\tau)}\mathbb{E}|{\cal M}^N(Y^{N}_{n_j})(t+m\tau ,\cdot)-V^{N}(t+m\tau,\cdot)|^2\\
&=&\sup\limits_{t\in [0,\tau)}\mathbb{E}|{\cal M}^N(Y^{N}_{n_j})(t,\theta
_{m\tau}\cdot)-V^{N}(t,\theta_{m\tau}\cdot)|^2\\
&=&\sup\limits_{t\in [0,\tau)}\mathbb{E}|{\cal M}^N(Y^{N}_{n_j})(t,\cdot)-V^{N}(t,\cdot)|^2
\to  0,
\end{eqnarray*}
 Thus
\begin{eqnarray*}
\sup\limits_{t\in \mathbb{R}}{\rm e}^{-2\Lambda|t|}E|{\cal M}^N(Y^{N}_{n_j})(t,
\cdot)-V^{N}(t,\cdot)|^2\to 0,
\end{eqnarray*}
as $j\to \infty$.
Therefore ${\cal M}^N( C^{\Lambda}_{\tau}(\mathbb{R},{\cal D}^{1,2}))$ is relatively compact in $C^{\Lambda}_{\tau}(\mathbb{R}, L^2(\Omega,\mathbb{R}^d))$.
\end{proof}
\begin{thm}\label{Main1} Under the same conditions of Proposition \ref{Main}, 
there exists a ${\cal B}(\mathbb{R})\otimes\mathcal{F}$-measurable map $Y: \mathbb{R}\times\Omega\rightarrow \mathbb{R}^d$
satisfying Eqn. (\ref{VT1}) and $Y(t+\tau, \omega)=Y(t, \theta_{\tau}\omega)$ for
any $t\in \mathbb{R}$ and $\omega\in \Omega$.
\end{thm}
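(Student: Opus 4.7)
The plan is to obtain $Y$ by measurably patching the family $\{Y^N\}_{N \geq 1}$ of local solutions produced in Proposition \ref{Main}. The critical observation, already recorded after (\ref{omegan}), is that on the event $\Omega_N$ the truncation is inactive ($\Phi^N = \Phi$), so that each $Y^N$ in fact satisfies the full IHRIE (\ref{VT1}) for every $\omega \in \Omega_N$. Temperedness of the random constant $C(\omega)$ from Lemma \ref{LEMMA1C3} ensures that $\{\Omega_N\}$ is nested, $\mathcal{F}$-measurable, and $\mathbb{P}(\bigcup_N \Omega_N) = 1$.

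My first step would be to upgrade $\{\Omega_N\}$ to a $\theta_\tau$-invariant exhausting sequence $\{\widehat{\Omega}_N\}$ of $\mathcal{F}$-measurable sets contained in $\Omega_N$ with $\mathbb{P}(\bigcup_N \widehat{\Omega}_N) = 1$. A natural candidate is a weighted orbit-bound of the form
\[
\widehat{\Omega}_N := \bigl\{\omega : C(\theta_{k\tau}\omega)\, e^{-\varepsilon |k|\tau} \leq N \text{ for all } k \in \mathbb{Z}\bigr\},
\]
for a fixed small $\varepsilon > 0$, symmetrised to become exactly $\theta_\tau$-invariant by absorbing the defect $|k| \mapsto |k-1|$ into a bounded enlargement of $N$. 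Temperedness of $C$ along the stationary orbit $\{\theta_{k\tau}\omega\}_{k \in \mathbb{Z}}$ (Borel--Cantelli applied to the identically distributed copies $C \circ \theta_{k\tau}$) makes the $e^{-\varepsilon|k|\tau}$-weighted supremum $\mathbb{P}$-a.s.\ finite, so $\bigcup_N \widehat{\Omega}_N$ has full measure.

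Next, setting $\widehat{\Omega}_0 := \emptyset$, define
\[
Y(t,\omega) := \sum_{N \geq 1} Y^N(t,\omega)\,\chi_{\widehat{\Omega}_N \setminus \widehat{\Omega}_{N-1}}(\omega),
\]
extended arbitrarily on the null complement. The $\mathcal{B}(\mathbb{R})\otimes\mathcal{F}$-measurability of $Y$ is immediate. Random periodicity
\[
Y(t+\tau,\omega) = \sum_N Y^N(t,\theta_\tau\omega)\,\chi_{\widehat{\Omega}_N \setminus \widehat{\Omega}_{N-1}}(\omega) = \sum_N Y^N(t,\theta_\tau\omega)\,\chi_{\widehat{\Omega}_N \setminus \widehat{\Omega}_{N-1}}(\theta_\tau\omega) = Y(t,\theta_\tau\omega)
\]
follows by combining $Y^N(t+\tau,\omega) = Y^N(t,\theta_\tau\omega)$ from Proposition \ref{Main} with the $\theta_\tau$-invariance of the indicators. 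On $\widehat{\Omega}_N \setminus \widehat{\Omega}_{N-1} \subset \Omega_N$, the equation (\ref{VT1}) satisfied by $Y^N$ is then inherited by $Y$, while temperedness of $Y$ follows from (\ref{PhiBound}) and the boundedness of $F$.

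The main obstacle is the first step: producing the honestly $\theta_\tau$-invariant exhaustion of $\Omega$ inside the $\Omega_N$. The naive intersection $\bigcap_{k \in \mathbb{Z}} \theta_{-k\tau}\Omega_N$ is $\theta_\tau$-invariant but fails to cover $\Omega$, since $C(\omega)$, although tempered, is typically unbounded along the discrete orbit and so no fixed $N$ can dominate the whole orbit. The resolution is the weighted bound above, where the geometric weight $e^{-\varepsilon|k|\tau}$ suppresses the subexponential growth of $C(\theta_{k\tau}\omega)$ guaranteed by temperedness, and the mild loss of exact shift-invariance is repaired by a bounded enlargement of the truncation level. Once this invariant exhaustion is secured, the remaining verifications are mechanical.
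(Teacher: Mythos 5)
Your plan coincides structurally with the paper's: glue the local solutions $Y^N$ along a $\theta_\tau$-invariant partition whose pieces lie inside the $\Omega_N$. The paper's own choice is the naive intersection $\Omega^*_N := \bigcap_{n\in\mathbb{Z}}\theta_{n\tau}^{-1}\Omega_N$, which is exactly $\theta_\tau$-invariant and contained in $\Omega_N$ by construction, and the paper then asserts $\bigcup_N\Omega^*_N = \hat\Omega$ via an interchange of $\bigcup_N$ with $\bigcap_n$. You rejected that route, and your reason (``no fixed $N$ can dominate the whole orbit'') is precisely the uniformity that this interchange quietly requires: $\bigcup_N\bigcap_n \subset \bigcap_n\bigcup_N$ holds always, but equality needs a single $N$ working for all $n$, which temperedness alone does not furnish. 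Your instinct here is sound; the paper's step deserves the same scrutiny you applied.

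The gap in your proposal lies in the invariance of $\widehat{\Omega}_N$. Applying $\theta_\tau$ replaces $e^{-\varepsilon|k|\tau}$ by $e^{-\varepsilon|k-1|\tau}$, so $\theta_\tau^{-1}\widehat{\Omega}_N$ sits between $\widehat{\Omega}_{Ne^{-\varepsilon\tau}}$ and $\widehat{\Omega}_{Ne^{\varepsilon\tau}}$ but equals no $\widehat{\Omega}_M$; ``absorbing the defect into a bounded enlargement of $N$'' cannot turn an almost-invariant set into an exactly invariant one by re-levelling alone. Exact invariance of the indicators is exactly what the middle equality in your display for $Y(t+\tau,\omega)=Y(t,\theta_\tau\omega)$ uses; without it the random periodicity is unproved. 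Note also the stronger constraint you are fighting: if $\widehat{\Omega}_N$ were genuinely $\theta_\tau$-invariant \emph{and} contained in $\Omega_N$, then for $\omega\in\widehat{\Omega}_N$ the whole orbit $\{\theta_{n\tau}\omega\}$ would lie in $\Omega_N$, forcing $\sup_n K(\theta_{n\tau}\omega)\leq N$ with $K$ the defining supremum in (\ref{omegan}) --- a bound that typically fails $\mathbb{P}$-a.s.\ because the stationary process $s\mapsto\|\Phi(1,\theta_s\omega)P^-\|$ is unbounded. A second, smaller, issue: the containment $\widehat{\Omega}_N\subset\Omega_N$ that you need in order to conclude $\Phi^N=\Phi$ on $\widehat{\Omega}_N$ is asserted but not checked; $\Omega_N$ in (\ref{omegan}) is controlled by a supremum over all real $s$, while your $\widehat{\Omega}_N$ only constrains the discrete orbit $\{\theta_{k\tau}\omega\}$.
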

\begin{proof} 
 According to 
Schauder's fixed point theorem, ${\cal M}^N$ has a fixed point in
$C^{\Lambda}_{\tau}(\mathbb{R}, L^2(\Omega,\mathbb{R}^d))$. That is to say
there exists a solution $Y^N\in
C^{\Lambda}_{\tau}(\mathbb{R}, L^2(\Omega,\mathbb{R}^d))$ of equation
(\ref{VT1}) such that for any $t\in \mathbb{R}$,
$Y^{N}(t+\tau,\omega)=Y^{N}(t,\theta_{\tau}\omega)$. Moreover, $Y^{N}(t+\tau,\omega)=Y^{N}(t,\theta_{\tau}\omega).$ 

Recall $\Omega_N$ as defined in (\ref{omegan}). As the random variable 
$$\max\left\{\sup_{t\geq 0}\|\Phi(t,\theta_s\omega)P^{-}\|{\rm e}^{-\frac{1}{2}\mu|t|},\ \sup_{t\leq 0}\|\Phi(t,\theta_s\omega)P^{+}\|{\rm e}^{-\frac{1}{2}\mu|t|}\right\}$$
is tempered from above, it is easy to see that
\begin{equation*}
\mathbb{P}(\Omega_N)\to 1, \mbox{\ \ \ \ \ as\ \ \ } N \to \infty.
\end{equation*} 
Note also that $\Omega_N$ is an increasing sequence of sets. Thus $\cup_{N}\Omega_N=\hat{\Omega}$ and $\hat{\Omega}$ has the full measure. In fact
$$\hat{\Omega}:=\left\{\omega:\ \sup_{s\in\mathbb{R}}\max\left\{\sup_{t\geq 0}\|\Phi(t,\theta_s\omega)P^{-}\|{\rm e}^{-\frac{1}{2}\mu|t|-\Lambda|s|},\ \sup_{t\leq 0}\|\Phi(t,\theta_s\omega)P^{+}\|{\rm e}^{-\frac{1}{2}\mu|t|-\Lambda|s|}\right\}<\infty\right\}.$$
Therefore it is invariant with respect to $\theta_s$ for all $s\in {\mathbb{R}}$.
Now we define
\begin{equation*}
\Omega^{\ast}_N=\bigcap_{n=-\infty}^{\infty}\theta_{n\tau}^{-1}\Omega_N.
\end{equation*}
Then it is easy to see that $\Omega^{\ast}_N$ is invariant with respect to $\theta_{n\tau}$ for each n. Besides we have $\Omega^{\ast}_N\subset \Omega^{\ast}_{N+1}$, which leads to
\begin{eqnarray*}
\bigcup_{N}\Omega^{\ast}_N=\bigcup_{N}\bigcap_{n=-\infty}^{\infty}\theta_{n\tau}^{-1}\Omega_N=\bigcap_{n=-\infty}^{\infty}\theta_{n\tau}^{-1}\left(\bigcup_{N}\Omega_N\right)=\bigcap_{n=-\infty}^{\infty}\theta_{n\tau}^{-1}\hat{\Omega}=\bigcap_{n=-\infty}^{\infty}\hat{\Omega}=\hat{\Omega},
\end{eqnarray*}
with $\mathbb{P}(\hat{\Omega})=1$.
Now we can define $Y:\hat{\Omega}\times \mathbb{R}\to \mathbb{R}^d$ as an combinations of $Y_N$ as follows
\begin{eqnarray}\label{NJ5}
Y:=Y_1\chi_{\Omega^{\ast}_1}+Y_2\chi_{\Omega^{\ast}_2\setminus\Omega^{\ast}_1}+\cdots+Y_N\chi_{\Omega^{\ast}_N\setminus\Omega^{\ast}_{N-1}}+\cdots.
\end{eqnarray}
 Thus it is easy to see that Y is $\mathcal{B}(\mathbb{R})\otimes\mathcal{F}$ measurable and satisfies the following property
\begin{eqnarray*}&&
Y(t+\tau,\omega)\\
&=&Y_1(t+\tau,\omega)\chi_{\Omega^{\ast}_1}(\omega)+Y_2(t+\tau,\omega)\chi_{\Omega^{\ast}_2\setminus\Omega^{\ast}_1}(\omega)+\cdots+Y_N(t+\tau,\omega)\chi_{\Omega^{\ast}_N\setminus\Omega^{\ast}_{N-1}}(\omega)+\cdots\\
&=&Y_1(t,\theta_{\tau}\omega)\chi_{\Omega^{\ast}_1}(\omega)+Y_2(t,\theta_{\tau}\omega)\chi_{\Omega^{\ast}_2\setminus\Omega^{\ast}_1}(\omega)+\cdots+Y_N(t,\theta_{\tau}\omega)\chi_{\Omega^{\ast}_N\setminus\Omega^{\ast}_{N-1}}(\omega)+\cdots\\
&=&Y_1(t,\theta_{\tau}\omega)\chi_{\Omega^{\ast}_1}(\theta_{\tau}\omega)+Y_2(t,\theta_{\tau}\omega)\chi_{\Omega^{\ast}_2\setminus\Omega^{\ast}_1}(\theta_{\tau}\omega)+\cdots+Y_N(t,\theta_{\tau}\omega)\chi_{\Omega^{\ast}_N\setminus\Omega^{\ast}_{N-1}}(\theta_{\tau}\omega)+\cdots\\
&=&Y(t,\theta_{\tau}\omega).
\end{eqnarray*}
Moreover $Y$ is a fixed point of ${\cal M}$. 

We can easily extend $Y$ to the whole $\Omega$ as $\mathbb{P}(\hat{\Omega})=1$, which is indistinguishable with $Y$ defined in (\ref{NJ5}). 
\end{proof}
\begin{rmk}
It is easy to see from (\ref{NJ5}) that $|Y|<\infty\ \ \mathbb{P}-a.s.$ Moreover, we don't know whether or not $Y\in C^{\Lambda}_{\tau}(\mathbb{R},L^2(\Omega,\mathbb{R}^d))$. However, for each $N$, $Y^N\in C^{\Lambda}_{\tau}(\mathbb{R},L^2(\Omega,\mathbb{R}^d))$. This suggests that $Y\in C^{\Lambda}_{\tau}(\mathbb{R},L^2(\Omega_N,\mathbb{R}^d))$ for each $N$. That is to say that $Y\in C^{\Lambda}_{\tau}(\mathbb{R},L^2_{loc}(\Omega,\mathbb{R}^d))$.
\end{rmk}

Now we combine the methods introduced in this section and in \cite{F-zh1} to study the following stochastic differential equations
\begin{equation}\label{SDEextra}
du=(Au+F(t,u))dt+\sum_{k=1}^MB_ku\circ dW^k_t+\sum_{k=1}^M \beta_k(t)dW^k_t,
\end{equation}
and
\begin{eqnarray}\label{hatY}
\hat{Y}(t,\omega)&=&\int_{-\infty}^t\Phi(t-s,\theta_s\omega)P^-F(s,\hat{Y}(s,\omega))ds-\int^{\infty}_t\Phi(t-s,\theta_s\omega)P^+F(s,\hat{Y}(s,\omega))ds\nonumber\\
&&+\sum_{k=1}^M\int_{-\infty}^t\Phi(t-s,\theta_s\omega)P^-\beta_k(s)dW^k_s-\sum_{k=1}^M\int^{\infty}_t\Phi(t-s,\theta_s\omega)P^+\beta_k(s)dW^k_s.\nonumber\\
\end{eqnarray}
In fact, this is only a combination of Eqn. (\ref{origT1}) we considered already in this section and \cite{F-zh1}. We include the result here as it is needed in the next section.

\begin{thm}\label{Main2} Assume that $A$, $F$ and $B_k$ satisfy the same conditions as in Proposition \ref{Main}. Let $\beta_k(t)=\beta_k(t+\tau)$ for any $t\in \mathbb{R}$ and there exists a constant $R_1$ s.t. $\|\beta_k(s_1)-\beta_k(s_2)\|^2\leq R_1|s_1-s_2|$. 
Then there exists a ${\cal B}(\mathbb{R})\otimes\mathcal{F}$-measurable map $\hat{Y}: \mathbb{R}\times\Omega\rightarrow \mathbb{R}^d$
satisfying Eqn. (\ref{hatY}) and $\hat{Y}(t+\tau, \omega)=\hat{Y}(t, \theta_{\tau}\omega)$ for
any $t\in \mathbb{R}$ and $\omega\in \Omega$.
\end{thm}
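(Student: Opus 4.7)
The strategy is to mimic the localization, truncation and Schauder fixed point argument used in the proof of Proposition \ref{Main} and Theorem \ref{Main1}, but on an augmented map that incorporates the additive stochastic convolution. Concretely, for each $N\in \mathbb{N}$, I would define the truncated map $\hat{\mathcal{M}}^N: C^{\Lambda}_{\tau}(\mathbb{R}, L^2(\Omega,\mathbb{R}^d))\to C^{\Lambda}_{\tau}(\mathbb{R}, L^2(\Omega,\mathbb{R}^d))$ by
\begin{eqnarray*}
\hat{\mathcal{M}}^N(\hat{Y}^N)(t,\omega) &=& \mathcal{M}^N(\hat{Y}^N)(t,\omega) \\
&& + \sum_{k=1}^M\int_{-\infty}^t\Phi^N(t-s,\theta_s\omega)P^-\beta_k(s)\, dW^k_s\\
&& -\sum_{k=1}^M\int^{\infty}_t\Phi^N(t-s,\theta_s\omega)P^+\beta_k(s)\, dW^k_s,
\end{eqnarray*}
where $\mathcal{M}^N$ is the map from (\ref{Map}) and the stochastic integrals are interpreted in the Skorohod sense (since $\Phi^N(t-s,\theta_s\omega)$ is anticipating in $s$). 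Note that the two additive stochastic convolutions do not depend on the unknown $\hat{Y}^N$, so they behave as a fixed inhomogeneous term, making the fixed-point argument structurally parallel to the one for $\mathcal{M}^N$.

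The first task is to verify that the additive correction $\Psi^N(t,\omega):= \sum_k\int_{-\infty}^t\Phi^N P^-\beta_k dW^k - \sum_k\int^{\infty}_t\Phi^N P^+\beta_k dW^k$ lives in $C^{\Lambda}_\tau(\mathbb{R},\mathcal{D}^{1,2})$. Its $L^2$-estimate follows from the Skorohod isometry (\ref{zhao22}) together with (\ref{MDesti1})--(\ref{MDesti2}) and the bound $\|\beta_k\|_\infty<\infty$ implied by periodicity and H\"older continuity; the two boundary decays $e^{\frac{1}{2}\mu_{m+1}(t-s)}$ (resp. $e^{\frac{1}{2}\mu_{m}(t-s)}$) in the truncation plus the weight $e^{\Lambda|s|}$ give integrability in the same manner as in Lemma \ref{Lem1}. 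Continuity in $t$ is obtained as in the bound for $T_1$--$T_4$ there, using Corollary \ref{LEMMA2C3} for the increment $\|\Phi(t_2-t_1,t_1)P^{\pm}-P^{\pm}\|$. The $\tau$-periodicity $\Psi^N(t+\tau,\omega)=\Psi^N(t,\theta_\tau\omega)$ is proved exactly as in step (C) of Lemma \ref{Lem1} via a change of variables and the $\theta$-invariance of the Brownian measure, using $\beta_k(s+\tau)=\beta_k(s)$.

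Next I would show $\hat{\mathcal{M}}^N$ sends $C^{\Lambda}_\tau(\mathbb{R},\mathcal{D}^{1,2})$ into itself and that its image (restricted to $[0,\tau)$) is relatively compact in $C([0,\tau),L^2(\Omega,\mathbb{R}^d))$, by applying Theorem \ref{B-S1}. The only genuinely new ingredient is the Malliavin derivative of the Skorohod integrals in $\Psi^N$, which by the Malliavin product rule gives two contributions: an integrand containing $\mathcal{D}^l_r(\Phi^N P^{\pm})\beta_k$ (bounded via (\ref{MDesti1})--(\ref{MDesti2})) and the pointwise trace term $\Phi^N(t-r,\theta_r\omega)P^{\pm}\beta_l(r)\chi$. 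Both fit the same template of bounds I produced for $L_1, L_2$ and $Q_i$ in Lemma \ref{LEMBE3}, yielding the three hypotheses of Theorem \ref{B-S1}. Continuity of $\hat{\mathcal{M}}^N$ in $\hat{Y}^N$ is identical to Step 2 of Lemma \ref{Lem1} since $\Psi^N$ is independent of $\hat{Y}^N$. Schauder's fixed point theorem then produces a fixed point $\hat{Y}^N$ satisfying the truncated version of (\ref{hatY}).

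Finally, the global $\hat{Y}$ is built by the same measurable gluing procedure as in (\ref{NJ5}) on the $\theta_{n\tau}$-invariant sets $\Omega_N^{\ast}=\bigcap_{n\in\mathbb{Z}}\theta_{n\tau}^{-1}\Omega_N$. The main obstacle, as I see it, lies in handling the Skorohod integrals in $\Psi^N$ cleanly — in particular, proving the Malliavin regularity $(3i)$ of Theorem \ref{B-S1} for the shift $\mathcal{D}^l_{r+\delta}\Psi^N - \mathcal{D}^l_r\Psi^N$, because one must simultaneously shift $\mathcal{D}^l_r(\Phi^N P^{\pm})$, the trace term support $\{r\}$, and the deterministic kernel $\beta_l(r)$; the H\"older regularity $\|\beta_k(s_1)-\beta_k(s_2)\|^2\leq R_1|s_1-s_2|$ in the hypothesis is exactly what supplies the $|\delta|$ factor to absorb the trace contribution, so it must be invoked at precisely this point.
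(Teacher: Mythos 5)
Your proposal is correct and follows essentially the same route as the paper: you define the truncated map $\hat{\mathcal M}^N$ by adding the two anticipating (Skorohod) stochastic convolutions to $\mathcal M^N$, observe that the only genuinely new work is the Malliavin differentiability of these added terms (producing an integrand term with $\mathcal D^l_r(\Phi^N P^\pm)\beta_k$ plus the pointwise trace terms $\chi_{\{r\leq t\}}\Phi^N_{t-r,r}P^-\beta_l(r)$ and $-\chi_{\{r\geq t\}}\Phi^N_{t-r,r}P^+\beta_l(r)$), verify the three hypotheses of Theorem~\ref{B-S1} for them, apply Schauder, and glue measurably on the $\Omega_N^*$ exactly as in Theorem~\ref{Main1}. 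You also correctly pinpoint where the H\"older-$1/2$ regularity $\|\beta_k(s_1)-\beta_k(s_2)\|^2\leq R_1|s_1-s_2|$ enters — namely to produce the $|\delta|$ factor when shifting the trace contribution in condition~(3i) — which is precisely the role it plays in the paper's estimate of the term there called $A_3$.
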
 
\begin{proof}
We will adopt the same procedure as in proofs of Proposition \ref{Main} and Theorem \ref{Main1} and first show the fixed-point existence of the following mapping,
\begin{eqnarray}\label{hatYt}
&&
\hat{\cal M}^N(\hat{Y}^N)(t,\omega)\nonumber 
\\
&=&\int_{-\infty}^t\Phi^N(t-s,\theta_s\omega)P^-F(s,\hat{Y}^N(s,\omega))ds-\int^{\infty}_t\Phi^N(t-s,\theta_s\omega)P^+F(s,\hat{Y}^N(s,\omega))ds\nonumber\\
&&+\sum_{k=1}^M\int_{-\infty}^t\Phi^N(t-s,\theta_s\omega)P^-\beta_k(s)dW^k_s-\sum_{k=1}^M\int^{\infty}_t\Phi^N(t-s,\theta_s\omega)P^+\beta_k(s)dW^k_s.\nonumber\\
\end{eqnarray}
Here the proof differs from Proposition \ref{Main} in the Malliavin derivative part only, while all the other steps are similar. The Malliavin derivative can be easily computed as follows,
\begin{eqnarray*}
\mathcal{D}^l_r\hat{\cal M}^N(\hat{Y}^N)(t,\omega)&=&\int_{-\infty}^r\chi_{\{r\leq t\}}(r){\cal D}_{r}^l(\Phi^N_{t-s,s}P^-)F(s, \hat{Y}^N(s,\omega))ds\nonumber\\
&&-\int^{+\infty}_r\chi_{\{r\geq t\}}(r){\cal D}_{r}^l(\Phi^N_{t-s,s}P^+)F(s, \hat{Y}^N(s,\omega))ds\nonumber\\
&&+\int_{-\infty}^t\Phi^N_{t-s,\hat{s}}P^-\nabla F(s, \hat{Y}^N(s,\omega)){\cal D}_{r}^l  \hat{Y}^N(s,\omega)ds\nonumber\\
&&-\int^{+\infty}_t\Phi^N_{t-s,s}P^+\nabla F(s, \hat{Y}^N(s,\omega)){\cal D}_{r}^l  \hat{Y}^N(s,\omega)ds\nonumber\\
&&+\sum_{k=1}^M\int_{-\infty}^r\chi_{\{r\leq t\}}(r){\cal D}_{r}^l(\Phi^N_{t-s,s}P^-)\beta_k(s)dW^k_s\nonumber\\
&&-\sum_{k=1}^M\int^{+\infty}_r\chi_{\{r\geq t\}}(r){\cal D}_{r}^l(\Phi^N_{t-s,s}P^+)\beta_k(s)dW^k_s\nonumber\\
&&+\chi_{\{r\leq t\}}(r)\Phi^N_{t-r,r}P^-\beta_l(r)-\chi_{\{r\geq t\}}(r)\Phi^N_{t-r,r}P^+\beta_l(r).
\end{eqnarray*}
Actually in the above, we only need to take care of the last four terms. The other 
terms in (\ref{hatYt}) can be dealt with using the same method as in the proof of Proposition \ref{Main}. By It\^o isometry, Lemma \ref{340} and the property of $\beta$, it is easy to see that
 \begin{equation*}
\sup_{t\in[0,\tau)} \int_{\mathbb{R}}\mathbb{E}\left|\sum_{k=1}^M\int^{+\infty}_r\chi_{\{r\geq t\}}(r){\cal D}_{r}^l(\Phi^N_{t-s,s}P^+)\beta_k(s)dW^k_s\right|^2dr<\infty,
 \end{equation*}
 and
  \begin{equation*}
\sup_{t\in[0,\tau)} \int_{\mathbb{R}}\mathbb{E}\left|\sum_{k=1}^M\int_{-\infty}^r\chi_{\{r\leq t\}}(r){\cal D}_{r}^l(\Phi^N_{t-s,s}P^-)\beta_k(s)dW^k_s\right|^2dr<\infty,
 \end{equation*}
 Besides, by the estimate (\ref{PhiBound}), we can show for each $l\in \{1,2,\cdots,M\}$, 
   \begin{eqnarray*}
\sup_{t\in[0,\tau)}  \int_{\mathbb{R}}\mathbb{E}\left|\chi_{\{r\leq t\}}(r)\Phi^N_{t-r,r}P^-\beta_l(r)\right|^2dr&<&\infty ,
\\
\sup_{t\in[0,\tau)}  \int_{\mathbb{R}}\mathbb{E}\left|\chi_{\{r\geq t\}}(r)\Phi^N_{t-r,r}P^+\beta_l(r)\right|^2dr&<&\infty.
\end{eqnarray*}
 Also we can show for each $l\in \{1,2,\cdots,M\}$,
 \begin{eqnarray*}
 \sup_{t\in [0,\tau), \delta\in \mathbb{R}}\dfrac{1}{|\delta|}\mathbb{E}\int_{\mathbb{R}}\left|\mathcal{D}^l_{r+\delta}\hat{M}(\hat{Y}^N)(t,\omega)-\mathcal{D}^l_r\hat{M}(\hat{Y}^N)(t,\omega)\right|^2dr<\infty.
 \end{eqnarray*}
 To achieve this, we only need to check the following terms,
\begin{eqnarray*}
&& \sup_{t\in [0,\tau), \delta\in \mathbb{R}}\dfrac{1}{|\delta|}\mathbb{E}\int_{\mathbb{R}}\left|\chi_{\{r+\delta\leq t\}}(r+\delta)\Phi^N_{t-(r+\delta),(r++\delta)}P^-\beta_l(r+\delta)-\chi_{\{r\leq t\}}(r)\Phi^N_{t-r,r}P^-\beta_l(r)\right|^2dr\\
&\leq & \sup_{t\in [0,\tau), \delta\in \mathbb{R}}\dfrac{3}{|\delta|}\mathbb{E}\int_{t-\delta}^t\left|\Phi^N_{t-(r+\delta),(r++\delta)}P^-\beta_l(r+\delta)\right|^2dr\\
&&+ \sup_{t\in [0,\tau), \delta\in \mathbb{R}}\dfrac{3}{|\delta|}\mathbb{E}\int_{-\infty}^t\left|(\Phi^N_{t-(r+\delta),(r++\delta)}P^--\Phi^N_{t-r,r}P^-)\beta_l(r+\delta)\right|^2dr\\
&&+ \sup_{t\in [0,\tau), \delta\in \mathbb{R}}\dfrac{3}{|\delta|}\mathbb{E}\int_{-\infty}^t\left|\Phi^N_{t-(r+\delta),(r++\delta)}P^-(\beta_l(r+\delta)-\beta_l(r))\right|^2dr\\
&=:& \sum_{i=1}^3 A_i<\infty,
\end{eqnarray*}
and by Lemma \ref{340} for each $k\in  \{1,2,\cdots,M\}$,
\begin{eqnarray*}
&&\dfrac{1}{|\delta|}\mathbb{E}\int_{\mathbb{R}}\left|\int_{-\infty}^{r+\delta}\chi_{\{r+\delta\leq t\}}(r){\cal D}_{r+\delta}^l(\Phi^N_{t-s,s}P^-)\beta_k(s)dW^k_s\right .\\
&&\hskip10pt
\left.
-\int_{-\infty}^r\chi_{\{r\leq t\}}(r){\cal D}_{r}^l(\Phi^N_{t-s,s}P^-)\beta_k(s)dW^k_s\right|^2dr\\
&\leq & \dfrac{2}{|\delta|}\left(\mathbb{E}\int_{-\infty}^{t-\delta}\left|\int_{r}^{r+\delta}{\cal D}_{r+\delta}^l(\Phi^N_{t-s,s}P^-)\beta_k(s)dW^k_s\right|^2dr\right .\\
&&\hskip10pt
 \left .+\mathbb{E}\int_{t-\delta}^{t}\left|\int^{\infty}_{r+\delta}{\cal D}_{r+\delta}^l(\Phi^N_{t-s,s}P^-)\beta_k(s)dW^k_s\right|^2dr\right)\\
&=: & B_1+B_2<\infty.  
\end{eqnarray*}
The boundedness of $A_1$ can be derived from the estimate of $\Phi$ and boundedness of $\beta$, and $A_2$ by the same way as we dealt with $T_1$ in the proof of Lemma \ref{Lem1}. As for $A_3$, the Lipschitz condition of $\beta$ works. The boundedness of $B_1$ and $B_2$ can be done through the It\^o isometry, Lemma \ref{LEMMAMD} and boundedness of $\beta$. 

Then by the same argument as in the proof of Proposition \ref{Main}, we can prove the existence of fixed point for Eqn. (\ref{hatYt}). Using the same "measurable glue" method in the proof of Theorem \ref{Main1} one can obtain a measurable solution $Y$ of Eqn. (\ref{hatY}) satisfying $\hat{Y}(t+\tau, \omega)=\hat{Y}(t, \theta_{\tau}\omega)$ for
any $t\in \mathbb{R}$ and $\omega\in \Omega$.  
\end{proof}
\medskip

The existence of random periodic solution results obtained in Theorems \ref{Main1} and \ref{Main2}, together with the "equivalence" of random periodic solutions and periodic measure obtained in \cite{F-zh3}, implies the existence of periodic measure with respect to the skew product of the random dynamical system and metric dynamical system. For this, define $\mu:\mathbb{R}\times\Omega\times \mathcal{B}(\mathbb{R}^d)\to [0,1]$ by
\begin{eqnarray}\label{pm1}
(\mu_s)_{\omega}=\delta_{Y(s,\theta(-s)\omega)}.
\end{eqnarray}
Then 
$$(\mu_{s+\tau})_{\omega}=(\mu_{s})_{\omega}$$
and
$$u(t+s, s, \theta(-s)\omega)(\mu_{s})_{\omega}=(\mu_{t+s})_{\theta(s)\omega}.$$
Define the product space $\hat{\Omega}=\Omega\times \mathbb{R}^d$ with $\sigma$-field $\hat{\mathcal{F}}=\mathcal{F}\otimes \mathcal{B}(\mathbb{R}^d)$ and the skew product $\Theta: \Delta \times \hat{\Omega}\to \hat{\Omega}$ by
$$\hat{\Theta}(t+s,s)(\omega,x)=(\theta(t)\omega, u(t+s,s,\theta(-s)\omega)x).$$
Then for any $t_1, t_2\in \mathbb{R}^{+}$, $s\in \mathbb{R}$,
$$\hat{\Theta}(t_2+t_1+s,t_1+s)\hat{\Theta}(t_1+s,s)=\hat{\Theta}(t_1+t_2+s,s).$$
Set $\mu_s:\hat{\mathcal{F}}\to [0,1]$ has
\begin{eqnarray}\label{pm2}
\mu_s(dx, d\omega)=(\mu_s)_{\omega}(dx)\times \mathbb{P}(d\omega).
\end{eqnarray}
Then $\mu_{s+\tau}=\mu_{\tau},$
and
$$\hat{\Theta}(t+s,s)\mu_s=\mu_{t+s},\ \mbox{for any }t\in \mathbb{R}^{+}, s\in \mathbb{R}.$$
Thus $\mu_s$, $s\in \mathbb{R}$ is a periodic measure of the skew product $\hat{\Theta}$. We omit the full details here, see \cite{F-zh3}.

\section{Applications and examples}

First we consider a simple example of stochastic differential equations with time periodic coefficients.
\begin{exmp} Consider
\begin{equation}\label{X1}
dX(t)=-X(t)dt+c\cos{(t)}dt+10\sin{(t)}dW_t.
\end{equation}
Here $c$ is a constant and $W(t)$ is a one-dimensional Brownian motion on a probability space $(\Omega, {\cal F}, P)$. 
Applying the result in \cite{F-zh1} or Theorem \ref{Main2} in this paper, we can assert that Eqn. (\ref{X1}) has a random periodic solution.  
In fact, according to the equivalence theorem (Theorem \ref{T1-2'}), the random periodic solution of (\ref{X1}) can be written explicitly  
as follows
\begin{eqnarray*}
Y(t)&=&\int _{-\infty}^t{\rm e}^{-t+s}c\cos (s)ds+10\int _{-\infty}^t{\rm e}^{-t+s}\sin (s)dW_s\\
&
=& {c\over 2}(\cos (t)+\sin (t))+10\int _{-\infty}^t{\rm e}^{-t+s}\sin (s)dW_s.
\end{eqnarray*}
Actually it can be verified by direct simple calculations that $Y(t)$ satisfies definition \ref{feng-zhao1}.
When $c=0$, the solution of Eqn. (\ref{X1}) is a Ornstein-Uhlenbeck process with white noise of periodic coefficient. Similar to the case of time independent case that 
the dynamical system generated by the Orsnstein-Uhlenbeck process has a stationary path and an invariant measure, the example suggests that 
the semiflow generated by the time periodic 
Orsnstein-Uhlenbeck process has a random periodic path and a periodic measure.

To study a smilar equation with multiplicative noise, let us consider,
\begin{equation}\label{X2}
dX(t)=-X(t)dt+\cos{(t)}dt+10X(t)\circ dW_t.
\end{equation}
Theorem \ref{Main2} implies the existence of random periodic solutions to Eqn. (\ref{X2}). According to Theorem \ref{feng-zhao60}, 
the random periodic solution of Eqn. (\ref{X2}) is give explicitly by
\begin{eqnarray*}
Y(t)=\int_{-\infty}^t{\rm e}^{-(t-s)+10(W(t)-W(s))}\cos (s)ds.
\end{eqnarray*}

The numerical simulations of Eqn. (\ref{X1}) (taking $c=1$) and Eqn. (\ref{X2}) displayed by 
Fig.\ref {1} and Fig.\ref{2} demonstrate how the random periodic solutions fluctuate around the deterministic
periodic solution of the noiseless ordinary differential equation ${d\over dt}X(t)=-X(t)+\cos (t)$ in the additive noise case 
and the multiplicative noise case respectively.
\begin{figure}\label{1}
\begin{center}
  \scalebox{0.3}[0.25]{\includegraphics{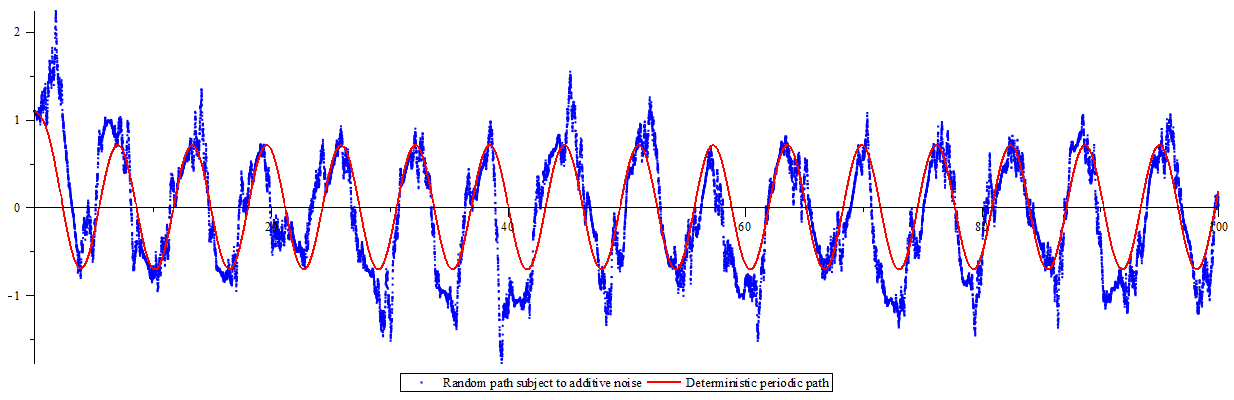}}
    \caption{Random trajectory subject to additive noise}
\end{center}
 \end{figure}
\vskip-40pt

\begin{figure}\label{2}
\begin{center}
  \scalebox{0.3}[0.25]{\includegraphics{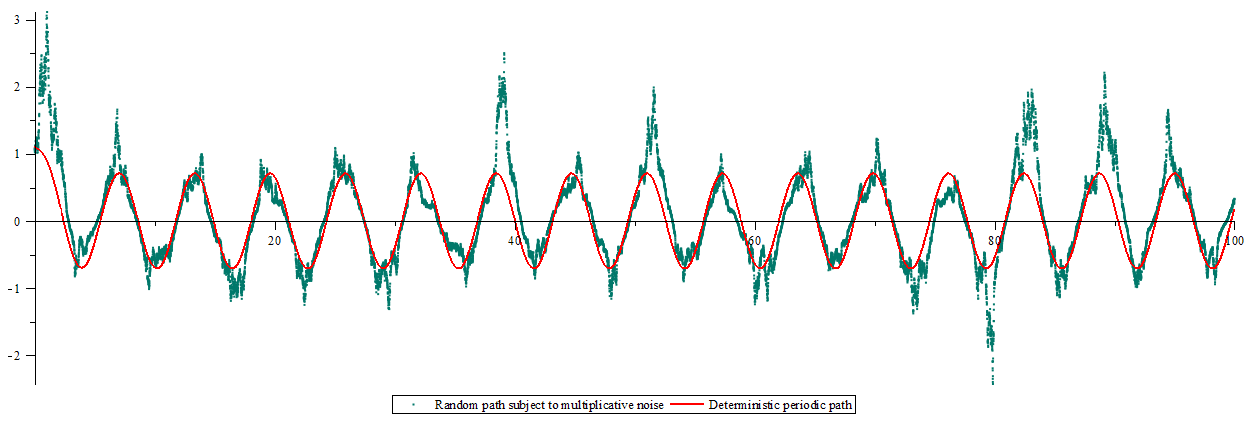}}
    \caption{Random trajectory subject to multiplicative noise}
\end{center}
 \end{figure}
\vskip-20pt
\end{exmp}
\vskip 1cm
Secondly, we apply the results of last section to study the following stochastic differential equations on $R^d$
\begin{equation}\label{FinalEqn}
dx=(Ax+f(x))dt+\sum_{k=1}^MB_kx\circ dW^k_t+\sum_{k=1}^M\gamma_kdW^k_t.
\end{equation}
As some $B_k$ and $\gamma_k$ can be zero, so this equation includes the case when the multiplicative noise and additive noise are independent, though in both the additive and multiplicative noise terms  we use the same multidimensional Brownian motions.
It is easy to see that under the conditions of Theorem \ref{Main2}, Eqn. (\ref{FinalEqn}) generates a cocycle random  dynamical system $\Psi: R^+\times \Omega\times R^d\to R^d$ such that for all $\omega\in \Omega$, (c.f. \cite{ar})
\begin{eqnarray}
  \Psi(t,\theta _s\omega)\circ \Psi(s,\omega)=\Psi (t+s,\omega), {\rm for \ all }\ t,s\geq 0.
  \end{eqnarray}
  Here $W_k$ and $\theta$ are the same as before. In this case
  the skew product $\bar \Theta$ is defined as $\bar \Theta(t)(\omega,x)=(\theta (t)\omega, \Psi(t,\omega)x)$.
   
\begin{thm}\label{Main3}
Let $A$, $B_k$ satisfy the same conditions as in Proposition \ref{Main} and the function $f\in C^3$ be uniformly bounded with bounded first order derivatives.
Assume the deterministic system
\begin{equation}\label{ODEFinal}
\dfrac{dx}{dt}=Ax+f(x)
\end{equation}
has a periodic solution $z$ with period $\tau>0$ and $z(t)$ is $C^3$ in $t$. Then Eqn. (\ref{FinalEqn}) has a random periodic solution of period $\tau$ i.e. there exists 
$Y:R\times \Omega\to R^d$ 
such that for any $t\in R^+,s\in R$,
\begin{eqnarray}\label{DS}
\Psi(t,\theta(s)\omega)Y(s,\omega)=Y(t+s, \omega),\ \ Y(s+\tau,\omega)=Y(s,\theta_{\tau}\omega).
\end{eqnarray}
Moreover, the measure $\mu_s$ given in (\ref{pm2}) is a periodic measure of $\bar \Theta$, and $\bar \mu
={1\over \tau}\int_0^{\tau}\mu_sds$ is an invariant measure whose random factorisation has the support $\{Y(s,\theta(-s)\omega): \ 0\leq s<\tau\}$ which is a closed curve.
\end{thm}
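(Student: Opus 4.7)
The plan is to reduce Eqn. (\ref{FinalEqn}) to the setting of Theorem \ref{Main2} by recentering around the deterministic periodic orbit $z$. Set $u(t) := x(t) - z(t)$. Using $\dot z = Az + f(z)$ and the fact that $B_k z(t) \circ dW^k_t = B_k z(t)\,dW^k_t$ (the integrand is deterministic), substitution yields
\begin{equation*}
du = (Au + F(t,u))\,dt + \sum_{k=1}^M B_k u \circ dW^k_t + \sum_{k=1}^M \beta_k(t)\,dW^k_t,
\end{equation*}
with $F(t,u) := f(u+z(t)) - f(z(t))$ and $\beta_k(t) := B_k z(t) + \gamma_k$. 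I would then verify the hypotheses of Theorem \ref{Main2}: $F \in C^3(\mathbb{R}^{d+1})$ since $f \in C^3$ and $z$ is $C^3$ in $t$; $F$ is $\tau$-periodic in $t$ by periodicity of $z$; $\|F\|_\infty \leq 2\|f\|_\infty$ and $\|\nabla_u F\|_\infty \leq \|Df\|_\infty$; $\beta_k$ is $\tau$-periodic and, since $z$ is $C^1$ and $\tau$-periodic on $\mathbb R$ (hence globally Lipschitz and bounded), the inequality $\|\beta_k(s_1)-\beta_k(s_2)\|^2\leq R_1|s_1-s_2|$ holds for a suitable constant $R_1$. Condition (C) on $A, B_k$ is inherited from the hypothesis.

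By Theorem \ref{Main2} there exists a $\mathcal B(\mathbb R)\otimes\mathcal F$-measurable $\tilde Y$ solving the IHRIE (\ref{hatY}) associated with the $u$-equation, with $\tilde Y(s+\tau,\omega) = \tilde Y(s,\theta_\tau\omega)$. Setting $Y(t,\omega) := \tilde Y(t,\omega) + z(t)$, the $\tau$-periodicity of $z$ immediately gives $Y(s+\tau,\omega) = Y(s,\theta_\tau\omega)$. Rewriting $\tilde Y$ in the forward mild form as in (\ref{1.10}) and undoing the change of variable shows that $Y$ is a mild solution of (\ref{FinalEqn}); Nualart's substitution theorem together with the pathwise uniqueness argument already carried out in the proof of Theorem \ref{feng-zhao60} then delivers $\Psi(t,\theta_s\omega)Y(s,\omega) = Y(t+s,\omega)$ for all $t\geq 0$ and $s \in \mathbb R$, which is (\ref{DS}). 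The measure $\mu_s$ defined by (\ref{pm1})--(\ref{pm2}) then inherits $\mu_{s+\tau}=\mu_s$ from the random periodicity of $Y$, and the skew-product identity $\bar\Theta(t)\mu_s=\mu_{t+s}$ follows from the cocycle relation $\Psi(t,\omega)Y(s,\theta_{-s}\omega)=Y(t+s,\theta_{-s}\omega)$ (obtained from (\ref{DS}) after substituting $\theta_{-s}\omega$ for $\omega$) combined with the $\theta_t$-invariance of $\mathbb P$. Averaging and shifting the integration variable then produce $\bar\Theta(t)\bar\mu=\bar\mu$.

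What remains is to identify the support of the random factorisation $\bar\mu_\omega=\tfrac1\tau\int_0^\tau \delta_{Y(s,\theta_{-s}\omega)}\,ds$ as a closed curve. The closing relation is immediate: by random periodicity $Y(\tau,\theta_{-\tau}\omega) = Y(0,\theta_\tau\theta_{-\tau}\omega) = Y(0,\omega)$. I expect the main technical obstacle to be the a.s.\ continuity of $s\mapsto Y(s,\theta_{-s}\omega)$ on $[0,\tau]$, since the time variable in $Y$ and the noise shift $\theta_{-s}$ vary simultaneously, so pathwise continuity of a single SDE trajectory does not directly suffice. My plan is to write $Y(s,\theta_{-s}\omega) = \Psi(s,\theta_{-s}\omega)Y(0,\theta_{-s}\omega)$ and analyse $Y(0,\theta_{-s}\omega)$ via the pull-back representation of (\ref{VT1}) with $\omega$ replaced by $\theta_{-s}\omega$, exploiting the exponential decay of the integrands in (\ref{VT1}), the joint measurability of $\Phi$, and the global Lipschitz bound on $F$ to run a dominated-convergence argument that yields a continuous modification in $s$ for $\mathbb P$-a.e.\ $\omega$. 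This final continuity step is the most delicate part of the argument; everything else reduces cleanly to Theorem \ref{Main2} and the equivalence between random periodic solutions and periodic measures developed in \cite{F-zh3}.
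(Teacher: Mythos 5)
Your proposal is correct and follows essentially the same route as the paper: recentre the dynamics around the deterministic periodic orbit $z(t)$, observe that the resulting equation for $u = x - z$ is exactly of the form (\ref{SDEextra}) treated in Theorem \ref{Main2}, lift the resulting random periodic solution $\tilde Y$ back via $Y = \tilde Y + z$, and invoke \cite{F-zh3} for the periodic-measure statements. The paper verifies (\ref{DS}) more directly by defining $u(t,s,\omega)u_0 = \Psi(t-s,\theta(s)\omega)(u_0+z(s)) - z(t)$ and computing algebraically, but this is only a presentational difference from your mild-form/uniqueness argument; your extra checks (the H\"older bound on $\beta_k$, the continuity of $s\mapsto Y(s,\theta_{-s}\omega)$ for the closed-curve claim) are sensible fillings-in of details the paper leaves to the cited references.
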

\begin{proof}
Let
$$u(t,s,\omega)u_0=\Psi(t-s,\theta(s)\omega)(u_0+z(s))-z(t),\ \ t\geq s$$
with $\Psi$ satisfying Eqn. (\ref{FinalEqn}). Then $u(t)$ (in short for $u(t,s)$) satisfies 
\begin{equation*}
du(t)=(Au(t)+f(u(t)+z(t))-f(z(t)))dt+\sum_{k=1}^MB_k u(t)\circ dW^k_t+\sum_{k=1}^M (B_kz(t)+\gamma _k)dW^k_t,
\end{equation*}
with $u(s)=u_0$. 
The above equation has a random periodic solution $\hat{Y}$ by Theorem \ref{Main2}, so Eqn. (\ref{FinalEqn}) 
has a random periodic solution $Y(t,\omega)=\hat{Y}(t,\omega)+z(t)$. In fact for any $t\ge 0$,
\begin{eqnarray*}
\Psi(t,\theta(s)\omega)Y(s,\omega)&=&u(t,s,\omega)(Y(s,\omega)-z(s))+z(t+s)\\
&=& u(t,s,\omega)\hat Y(s,\omega)+z(t+s)\\
&=& \hat Y(t+s,\omega)+z(t+s)=Y(t+s,\omega),
\end{eqnarray*}
and the random periodicity of $Y$ is obvious.  The 
last claim follows from the existence of a random periodic solution and the result on periodic and invariant measures in \cite{F-zh3}.
\end{proof}

In the above theorem, 
the main assumption is that the deterministic system (when noise is switched off) has a periodic solution, the other assumptions are very mild.  
Many ordinary differential equations modeling real world problems arising from biology, chemistry, chemical engineering, climate dynamics, economics etc.
have
periodic solutions. Here we can make the function $f$ bounded by a smooth truncation procedure outside 
a sufficiently large ball mentioned in \cite{F-zh2} if necessary without changing their local  
dynamical behavior. 
Therefore Theorem \ref{Main3} gives the existence of random periodic solutions of stochastic differential equations arising from many 
real world applications.
 
As a simple, but typical example, we consider a random dynamical system generated by a random perturbation to the following
ordinary differential equation in $\mathbb{R}^2$,
\begin{eqnarray}\label{Example}
\left\{\begin{array}{l}dy_1(t)=-y_2(t)dt-y_1(t)dt+y_1(t)(2-y^2_1(t)-y^2_2(t))\phi(y_1(t),y_2(t))dt,\\
dy_2(t)=y_1(t)dt-y_2(t)dt+y_2(t)(2-y^2_1(t)-y^2_2(t))\phi(y_1(t),y_2(t))dt,\\
                                     \end{array}\right.
\end{eqnarray}
where $\phi$ is a smooth function such that
\begin{eqnarray}\label{ExampleBounded}
\phi(y_1,y_2)=\left\{\begin{array}{l}1,\ \mbox{when } y_1^2+y^2_2\leq 2^{100},\\
0,\ \mbox{when } y_1^2+y^2_2\geq 2^{101}.\\
                                     \end{array}\right.
\end{eqnarray}
It is not hard to see that the limit cycle of this system is $y_1^2+y_2^2=1$.
\begin{exmp} 
 Consider the corresponding stochastic differential equation with additive noise,
\begin{eqnarray}\label{ExampleA}
\left\{\begin{array}{l}dy_1(t)=-y_2(t)dt-y_1(t)dt+y_1(t)(2-y^2_1(t)-y^2_2(t))\phi(y_1(t),y_2(t))dt+10dW_t^1,\\
dy_2(t)=y_1(t)dt-y_2(t)dt+y_2(t)(2-y^2_1(t)-y^2_2(t))\phi(y_1(t),y_2(t))dt+10dW_t^2.\\
                                     \end{array}\right.
\end{eqnarray}
We can apply the result in \cite{F-zh2} or Theorem \ref{Main3}
to assert that Eqn. (\ref{ExampleA}) has a random periodic solution. 
\begin{figure}\label{3}
\begin{center}
   \scalebox{0.25}[0.25]{\includegraphics{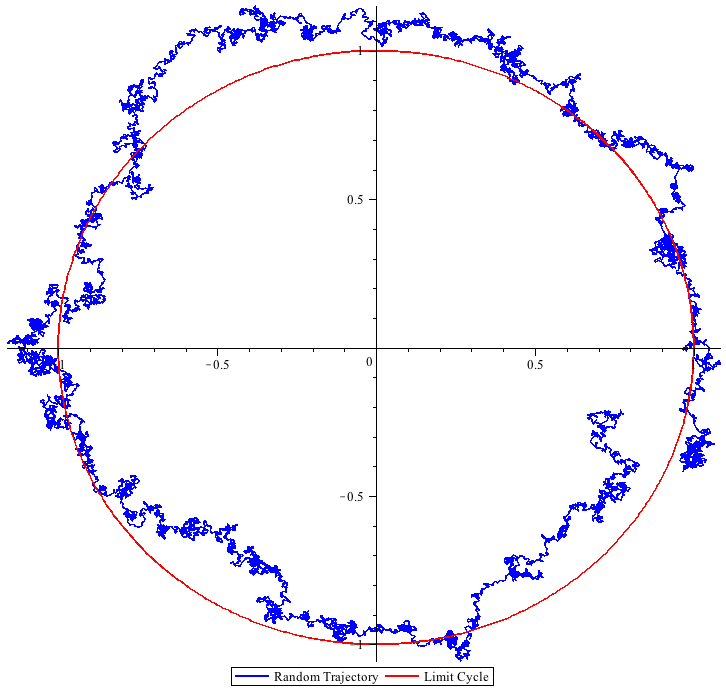}}
    \caption{Random trajectory subject to additive noise}
    \end{center}
\end{figure}
Fig.\ref{3} illustrates a numerical simulation of a sample path of the random periodic solution. 
Fig.\ref{4} illustrates the formulation of the invariant measure.  We start at $t=0$ with uniform distribution on $[-2,2]\times[-2,2]$. 
Subject to the same random perturbations, all those points 
begin to move towards a random closed 
curve simultaneously. In the long run, they evolve into a random closed curve, which represents the support of the invariant measure 
with respect the skew product dynamical system as described in Theorem \ref{Main3}. 
The closed curve moves randomly, its law is a periodic
measure of the corresponding Markovian semigroup. 
\begin{figure}\label{4}
\begin{center}
   {\includegraphics[trim=1cm  10.5cm 1.5cm  1.5cm, clip=true, width=0.7\textwidth]{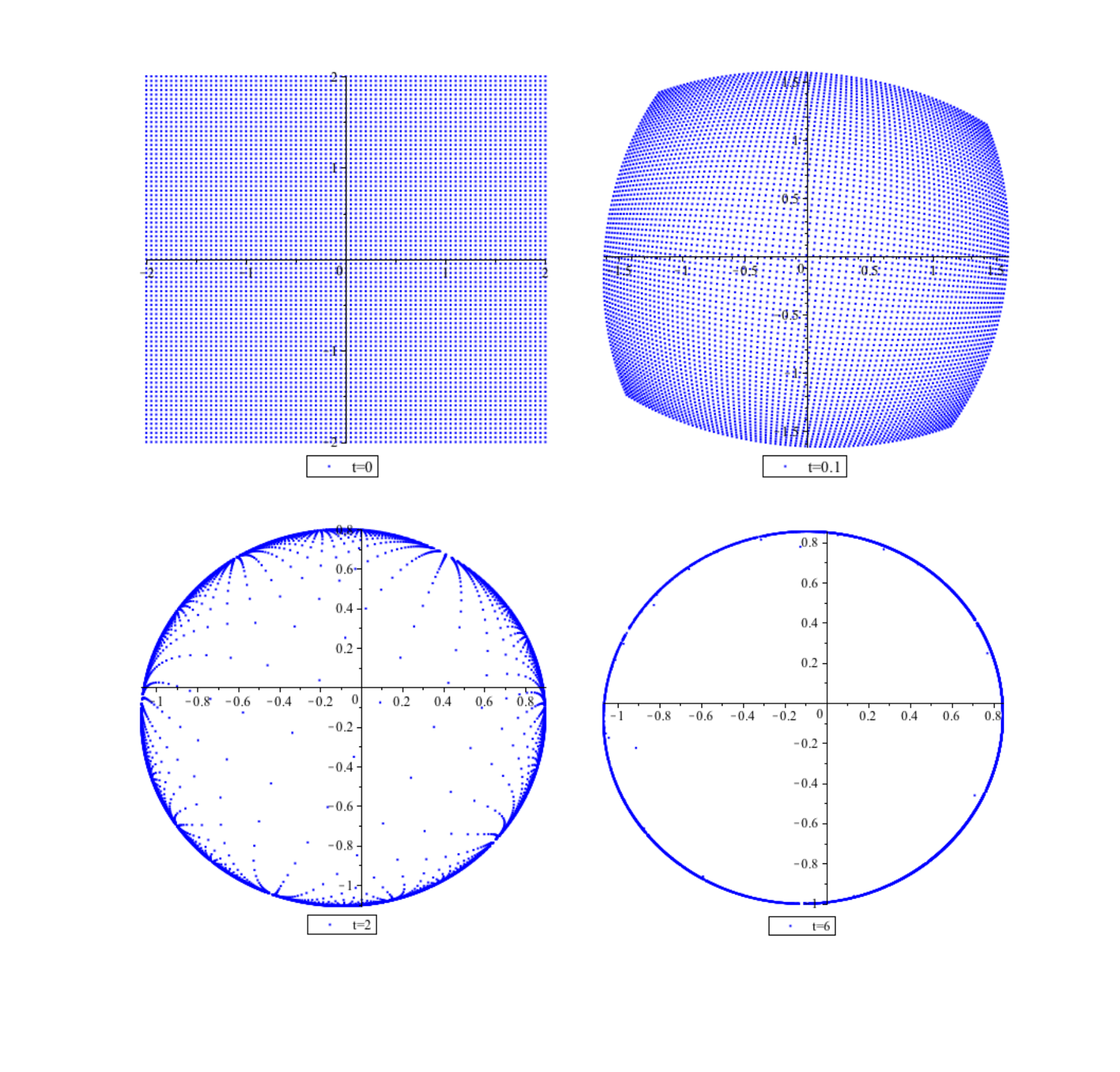}}
    \caption{Random periodic evolution}
    \end{center}
\end{figure}

Now we consider the random perturbation of (\ref{Example}) subject to multiplicative linear noise,
\begin{eqnarray}\label{ExampleML}
\left\{\begin{array}{l}dy_1(t)=-y_2(t)dt-y_1(t)dt+y_1(t)(2-y^2_1(t)-y^2_2(t))\phi(y_1(t),y_2(t))dt+10y_1\circ dW_t^1,\\
dy_2(t)=y_1(t)dt-y_2(t)dt+y_2(t)(2-y^2_1(t)-y^2_2(t))\phi(y_1(t),y_2(t))dt+10y_2\circ dW_t^2.\\
                                     \end{array}\right.
\end{eqnarray}
Note the commutativity assumption in Theorem \ref{Main2} is not satisfied. But following Remark \ref{rmk1}, we can still use the previous result to conclude the Eqn. (\ref{ExampleML}) has a 
random periodic solution.  A sample path is given by a numerical simulation in Fig.\ref{5}.
Similar phenomena as the formation of an invariant measure as demonstrated for Eqn. (\ref{ExampleA}) in Fig.\ref{4}
can be obtained. The detail is omitted here.
\begin{figure}\label{5}
\begin{center}
  \scalebox{0.25}[0.25]{\includegraphics{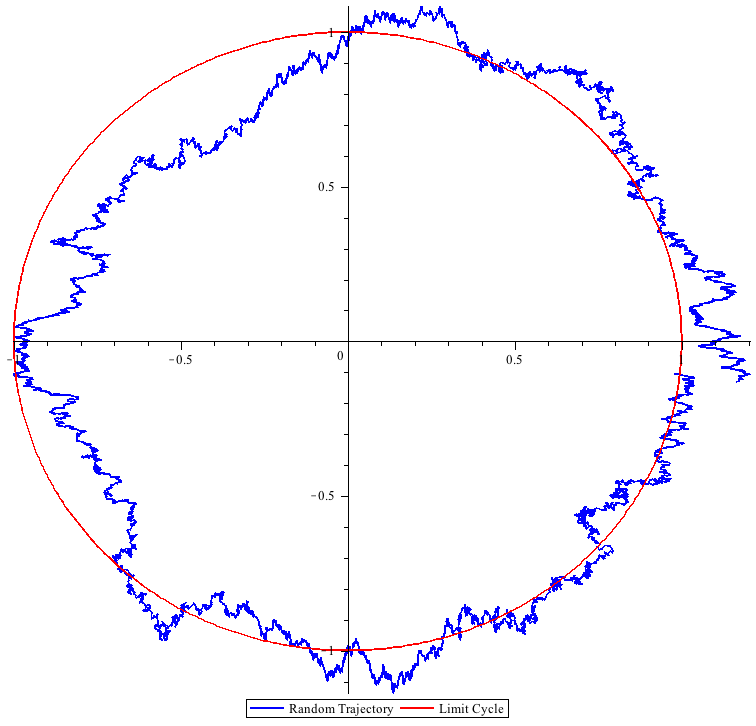}}
    \caption{Random Trajectory subject to multiplicative linear noise}
\end{center}
 \end{figure}
 \end{exmp}
 

\section*{Appendix}
 \begin{proof}[Proof of Lemma \ref{LEMMA1C3}]
(i). It is easy to show that $\Phi$ satisfies the condition $\sup\limits_{0\leq t\leq 1}\log^+\|\Phi(t,\omega)^{\pm1}\|\in L^1(\Omega)$. 
  So the MET theorem ensures the existence of the random Oseledets splitting 
  $$\mathbb{R}^d=E_p(\omega)\oplus E_{p-1}(\omega)\oplus \dots\oplus E_{m+1}(\omega)\dots\oplus E_{1}(\omega)
  ,$$ 
  and the corresponding random projections $P^{\pm}(\omega)$. But if we consider the forward filtration and $\lim_{t\to \infty}(\Phi(t,\omega)^{\ast}\Phi(t,\omega))^{1/2t}:=\Psi(\omega)$, the mutually commutative property of $A$, $A^{\ast}$, $B_k$, and $B_k^{\ast}$ leads to 
\begin{eqnarray*}
\Psi(\omega)=\lim_{t\to \infty}\exp\left\{\frac{1}{2}(A+A^{\ast})+\sum_{k=1}^M\dfrac{(B_k+B^{\ast}_k)W^k_t}{2t} \right\}
=\exp\left\{\frac{A+A^{\ast}}{2}\right\}.
\end{eqnarray*}
Note ${\rm e}^{\mu_p}<{\rm e}^{\mu_{p-1}}\dots < {\rm e}^{\mu_{m+1}}<1<{\rm e}^{\mu_{m}}<\dots<{\rm e}^{\mu_1}$ are the 
eigenvalues of $\exp\left\{\frac{A+A^{\ast}}{2}\right\}$, and  $U_p, \cdots, U_1$ are still the corresponding orthogonal eigenspaces, with multiplicity  $d_i:=\mbox{dim}\ U_i$. Define $V_{p+1}:=\{0\}$, and for $1\leq i\leq p$, $i\in \mathbb{N}$, 
\begin{equation}\label{VV}
V_i:=U_p\oplus U_{p-1}\oplus \cdots\oplus U_{i}.
\end{equation}
Therefore
\begin{equation}\label{VV1}
V_p\subset V_{p-1}\subset \cdots \subset V_i\subset \cdots \subset V_1=\mathbb{R}^d
\end{equation}
defines a forward filtration.

Now we consider the backward filtration and $\lim_{t\to \infty}(\Phi(-t,\omega)^{\ast}\Phi(-t,\omega))^{1/2t}:=\tilde{\Psi}(\omega)$. Note
\begin{eqnarray*}
\tilde{\Psi}(\omega)=\lim_{t\to \infty}\exp\left\{-\frac{1}{2}(A+A^{\ast})-\sum_{k=1}^M\dfrac{(B_k+B^{\ast}_k)W^k_{-t}}{2t} \right\}=\exp\left\{-\frac{A+A^{\ast}}{2}\right\}.
\end{eqnarray*}
Let $\tilde{\mu}_i=-\mu_{p+1-i}$. Then $\tilde{\mu}_p<\tilde{\mu}_{p-1}\dots < \tilde{\mu}_{p+1-m}<0<\tilde{\mu}_{p-m}<\dots<\tilde{\mu}_1$ are the eigenvalues of $-\frac{A+A^{\ast}}{2}$. Let $\tilde{U}_p, \cdots, \tilde{U}_1$ be the corresponding eigenspaces, with multiplicity  $\tilde{d}_i:=\mbox{dim}\ \tilde{U}_i$. Then $\tilde{U}_i=U_{p+1-i}$. 
Define $\tilde{V}_{p+1}:=\{0\}$, and for $1\leq i\leq p$, $i\in \mathbb{N}$, 
\begin{equation}\label{VV3}
\tilde{V}_i:=\tilde{U}_p\oplus \tilde{U}_{p-1}\oplus\cdots\oplus \tilde{U}_{i}=U_1\oplus U_2\oplus\cdots\oplus U_{p+1-i}.
\end{equation}
Therefore
\begin{equation}\label{VV4}
\tilde{V}_p\subset \tilde{V}_{p-1}\subset \cdots \subset\tilde{V}_i\subset \cdots \subset\tilde{V}_1=\mathbb{R}^d
\end{equation}
defines the backward filtration. Then we can construct the space $E_i$ as the intersection of certain spaces from the forward filtration (\ref{VV1}) and the backward filtration (\ref{VV4}), 
\begin{equation}\label{VVV}
E_i:=V_i\cap \tilde{V}_{p+1-i}=U_i.
\end{equation}
Thus the Lyapunov exponents of $\Phi$ depend on $\frac{1}{2}(A+A^{\ast})$ only. This implies that the  
Oseledets spaces are non-random and so are the corresponding projections $P^{\pm}$.

(ii). Note when $t\leq 0$,
\begin{eqnarray*}
\|\Phi(t,\omega)P^+\|&=&\|P^{+{\ast}}\Phi(t,\omega)^{\ast}\Phi(t,\omega)P^+\|^{1/2}\\
&=&\left\|P^{+{\ast}}\exp\left\{(A+A^{\ast})t+\textstyle\sum\nolimits_{k=1}^M(B_k+B^{\ast}_k)W^k_t\right\}P^+ \right\|^{1/2}\\
&\leq &\left\|P^{+{\ast}}\exp\left\{(A+A^{\ast})t\right\}P^+ \right\|^{1/2}\left\|\exp\left\{\textstyle\sum\nolimits_{k=1}^M(B_k+B^{\ast}_k)W^k_t \right\}\right\|^{1/2}\\
&\leq &\left\|\exp\left\{\frac{1}{2}(A+A^{\ast})t\right\}P^+ \right\|\exp\left\{\frac{1}{2}\textstyle\sum\nolimits_{k=1}^M\|B_k+B^{\ast}_k\||W^k_t| \right\}\\
&\leq & e^{\frac{1}{2}\mu t+\sum_{k=1}^M\|B\||W^k_t|}e^{\frac{1}{2}\mu_m t},
\end{eqnarray*}
where  $\|B\|:=\frac{1}{2}\max_{k\in\{1,2,\cdots,M\}}\|B_k+B^{\ast}_k\|$, $\mu:=\min\{-\mu_{m+1},\mu_m\}>0$. 
Define 
 \begin{equation}\label{Tempered}
C(\omega):=\sup_{t\in \mathbb{R}}C(t,\omega):=\sup_{t\in \mathbb{R}} {\rm e}^{-\frac{1}{2}\mu |t|+\sum_{k=1}^M\|B\||W^k_t|}\geq 1.
\end{equation}
Now it suffices to check that $C(\omega)$ is tempered from above. Similarly as in \cite{duan-lu}, 
 using
 $|W(t+s)|\leq C_{\delta,\omega}+|s|^{\delta}+|t|^{\delta}\ \ \mathbb{P}-a.s.$ for some $\frac{1}{2}<\delta<1$, from the iterated logarithm law of Brownian motion,
 we have 
 \begin{eqnarray*}
\lim_{s\to \pm\infty}\frac{1}{|s|}\log^{+} \sup_{t\in \mathbb{R}}  C(t,\theta_{s}\omega)&=&\lim_{s\to \pm\infty}\frac{1}{|s|}\log \sup_{t\in \mathbb{R}}  C(t,\theta_{s}\omega)\\
&=&\lim_{s\to \pm\infty}\frac{1}{|s|}\sup_{t\in \mathbb{R}}\log {\rm e}^{-\frac{1}{2}\mu |t|+\sum_{k=1}^M\|B\||\theta_{s}W^k_t|}\\
&\leq &\lim_{s\to \pm\infty}\frac{1}{|s|}\sup_{t\in \mathbb{R}} \left(-\frac{1}{2}\mu |t|+\sum_{k=1}^M\|B\||W^k_{t+s}|\right)+\lim_{s\to \pm\infty}\sum_{k=1}^M\|B\|\frac{|W^k_{s}|}{|s|}\\
&\leq &\lim_{s\to \pm\infty}\frac{1}{|s|}\sup_{t\in \mathbb{R}} \left(-\frac{1}{2}\mu |t|+M\|B\||t|^{\delta}\right)\\
&&+\sup_{t\in \mathbb{R}} \lim_{s\to \pm\infty}M\|B\|\frac{|s|^{\delta}}{|s|}+\sup_{t\in \mathbb{R}} \lim_{s\to \pm\infty}\frac{M\|B\||C_{\delta,\omega}|}{|s|}\\
&= &\lim_{s\to \pm\infty}\frac{1}{|s|}\sup_{t\in \mathbb{R}} \left(-\frac{1}{2}\mu |t|+M\|B\||t|^{\delta}\right)= 0,\ \ \ \mathbb{P}-a.s.,
\end{eqnarray*}
where the last inequality holds due to the fact that $\sup_{t\in \mathbb{R}} (-\frac{1}{2}\mu |t|+M\|B\||t|^{\delta})<\infty$. This together with the fact that
$$\lim_{s\to \pm\infty}\frac{1}{|s|}\log \sup_{t\in \mathbb{R}}  C(t,\theta_{s}\omega)\geq 0,$$ 
leads to that $C(\omega)$ is a tempered random variable. 
Similar argument applies to $\Phi(t,\theta_{s}\omega)P^-$. Finally by definition of random variable tempered from above, we can easily conclude that $\Phi(t,\theta_s\omega)P^-$ and $\Phi(t,\theta_s\omega)P^+$ satisfy (\ref{PhiBound}).  
 \end{proof}

\begin{proof}[Proof of Corollary \ref{LEMMA2C3}]
We consider $P^-$ case only. The estimation for $P^+$ case can be derived analogously.
From Eqn. (\ref{phiT1}) and the definition of $P^-$, it is natural to express
 the projection $ \Phi P^-: \mathbb{R}\times \Omega\rightarrow \mathcal{L}(\mathbb{R}^d, E^- )$ as follows,
 \begin{eqnarray*}
\Bigg\{\begin{array}{l}d\Phi(t,\omega)P^-=A\Phi(t)P^-\,dt+\sum\limits_{k=1}^MB_k \Phi(t)P^-\circ  d W^k_t,   \\
\Phi(0,\omega)P^-=P^-\in \mathcal{L}(\mathbb{R}^d, E^- ).
                                     \end{array}
\end{eqnarray*}
Then for any $t,\hat{s}\in \mathbb{R}$, by the ergodic property of $\theta$ and Holder's inequality we have that
\begin{eqnarray*}
&& \mathbb{E} \|P^--\Phi(t, \theta_{\hat{s}}\cdot)P^-\|^2\\
 &=& \mathbb{E}\Big\|\int_0^{t}\Big(A+\frac{1}{2}\sum_{k=1}^MB^2_k\Big)\Phi_{{\hat{h}},{\hat{s}}}P^-d\hat{h}+\sum_{k=1}^M\int_0^{t}B_k \Phi_{{\hat{h}},{\hat{s}}}P^-dW^k_{{\hat{h}}+{\hat{s}}} \Big\|^2\\
  &\leq &(M+1)\Big\|A+\frac{1}{2}{\sum_{k=1}^M}B^2_k\Big\|^2|t| \int_0^{t}\mathbb{E}\|\Phi_{{\hat{h}}}P^-\|^2d\hat{h}+(M+1){\sum_{k=1}^M}\|B_k\|^2\int_{\hat{s}}^{t+\hat{s}}\mathbb{E}\|\Phi_{{\hat{h}}-{\hat{s}}}P^-\|^2d\hat{h}\\
&\leq& 2^M(M+1)\Big(2\|A\|^2|t|+\Big(\textstyle\sum\nolimits_{k=1}^M\|B_k \|^2\Big)^2|t|+\sum_{k=1}^M\|B_k \|^2\Big){\rm e}^{2\|A\||t|+2M\|B\|^2|t|}|t|,
\end{eqnarray*}
where 
\begin{eqnarray*}
\int_0^{t}\mathbb{E}\|\Phi_{{\hat{h}}}P^-\|^2d\hat{h}&\leq& \int_0^{t}\mathbb{E}\|{\rm e}^{A\hat{h}+\sum_{k=1}^{M}B^kW_{\hat{h}}^k}\|^2d\hat{h}\leq  \int_0^{t}{\rm e}^{2\|A\|\hat{h}}\prod_{k=1}^{M}\mathbb{E}{\rm e}^{2\|B\||W_{\hat{h}}^k|}d\hat{h}\nonumber\\
&=& 2^{M}\int_0^{t}{\rm e}^{2\|A\|\hat{h}}{\rm e}^{2M\|B\|^2\hat{h}}d\hat{h}\leq  2^{M}{\rm e}^{2\|A\||t|+2M\|B\|^2|t|}|t|.
\end{eqnarray*}
Finally the last inequality can be easily drawn from above.
\end{proof}

\begin{proof}[Proof of Theorem \ref{B-S1}]
According to the generalized Arzel\`a$-$Ascoli lemma (c.f. \cite{kelley}), it suffices to check the uniform equicontinuity and pointwise relative compactness of $v_n$.
First we claim that $\{v_n(t,\cdot),\ n\in \mathbb{N}\}$ is relatively compact in $L^2(\Omega)$ for any fixed $t$. To achieve this, we decompose $v_n$ as Wiener-It\^o chaos expansions (c.f.\cite{bernt}),
\begin{equation}\label{NJ0404}
v_n(t,\omega)=\sum_{m=0}^\infty I_m(f_n^m(\cdot,t))(\omega),
\end{equation}
where $f_n^m(\cdot,t)$  are symmetric elements in $L^2(\mathbb{R}^m)$ for each $m\geq 0$ and each $t\in [a,b]$. By the similar argument in the proof of Theorem 1 in \cite{bally}, the relative compactness of $\{v_n\}_{n\in \mathbb{N}}$ is reduced to the relative compactness of $\{f_n^m\}_{n\in \mathbb{N}}$ for each finite $m\in \mathbb{N}$.

When $m=0$, $f_n^0(t)=\mathbb{E}v_n(t)$, and for any $t_1, t_2\in  [a,b]$, hypotheses (i) and (ii) imply the uniform boundedness of  $f_n^0$, 
\begin{eqnarray*}
&&\sup_n\sup_{t\in  [a,b]} |f_n^0(t)|\leq \sup_n\sup_{t\in  [a,b]}\sqrt {\mathbb{E}|v_n(t)|^2}\leq\sup_{n\in N}\sup_{t\in  [a,b]} ||v_n(\cdot, t)||_{{1,2}} <\infty.\\
\end{eqnarray*}
Besides, applying Jensen's inequality gives the uniform equicontinuity of  $f_n^0$, 
\begin{eqnarray*}
\sup_n|f_n^0(t_1)-f_n^0(t_2)|&=&\sup_n |\mathbb{E}(v_n(t_1)-v_n(t_2))|\leq \sup_n\mathbb{E} |v_n(t_1)-v_n(t_2)|\\
&\leq &\sup_n\sqrt {\mathbb{E}|v_n(t_1)-v_n(t_2)|^2}\leq \sqrt{C|t_1-t_2|}.
\end{eqnarray*}
So $\{f_n^0\}_{n=1}^\infty$ is relatively compact in $C( [a,b])$ according to the classical Arzel\`a-Ascoli lemma.

Using a similar argument as in the proof of Theorem 2 in \cite{bally} for each $m\geq 1$ with the general relative compactness criterion (c.f.Theorem 2.32 in \cite{adams}), we claim that  $\{f_n^m(\cdot, t)\}_{n\in \mathbb{N}}$ is relatively compact in $L^2(\mathbb{R}^m)$ for each fixed $t$. 
To see this, let $h=(h_1, \ldots,h_m)\in \mathbb{R}^{m}$. It holds that
\begin{eqnarray*}
&&\|\tau_{h}f^m_{n}-f^m_{n}\|^2_{L^2( \mathbb{R}^m)} \\
&=&\int_{\mathbb{R}^m}|f_n^m(t,t_1+h_1,\cdots ,t_m+h_m)-f_n^m(t,t_1,\cdots ,t_m)|^2dt_1\cdots dt_m\\ 
&\leq & C\sum^m_{i=1} \int_{\mathbb{R}^m}|f_n^m(t,t_1,\cdots,t_{i-1}, t_{i}+h_i, t_{i+1}+h_{i+1}, \cdots, t_m+h_m)\\
&&\hspace{2cm}-f_n^m(t,t_1,\cdots,t_{i-1}, t_{i}, t_{i+1}+h_{i+1}, \cdots, t_m+h_m)|^2dt_1\cdots dt_m
\end{eqnarray*}
\begin{eqnarray*}
&= & C\sum^m_{i=1} \int_{\mathbb{R}}\|f^m_{n}(t,\ldots,t_i+h_i,\ldots)-f^m_{n}(t,\ldots,t_i,\ldots)\|^2_{L^2(\mathbb{R}^{m-1})}dt_i\\
&=&\frac{C}{(m-1)!}\sum^m_{i=1}\int_{\mathbb{R}} \mathbb{E}|I_{m-1}(f^m_{n}(t,\cdots, t_i+h_i,\cdots)-f^m_{n}(t,\cdots,t_i,\cdots))|^2dt_i\\
&\leq &\frac{C}{m m!} \sum^m_{i=1} \int_{\mathbb{R}}\mathbb{E}\Big|\sum_{m\geq 1}mI_{m-1}\big(f^m_{n}(t,\cdots, t_i+h_i,\cdots)-f^m_{n}(t,\cdots, t_i,\cdots)\big)\Big|^2dt_i\\
&\leq &\frac{C}{m!}\int_{\mathbb{R}}\mathbb{E}\Big|\mathcal{D}_{r+h_1}v_n(t)-\mathcal{D}_{r}v_n(t)\Big|^2dr
\leq  C|h_1|,
\end{eqnarray*}
where $C$ is a constant depending on $m$. Moreover, for any $\epsilon>0$, there exists $[\alpha, \beta]\subset \mathbb{R}$ such that 
$$
\int_{\mathbb{R}\setminus [\alpha, \beta]}\mathbb{E}|\mathcal{D}_{r}v_n(t)|^2dr<\epsilon.
$$
Let $G=[\alpha, \beta]\times[-1,1]^{d-1}$. Then we have
\begin{eqnarray*}
\int_{R^m\setminus \bar{G}}|f_n^m(t,t_1,\cdots,t_m)|^2dt_1\cdots dt_m
&\leq &C\int_{\mathbb{R}\setminus [\alpha, \beta]}\|f_n^m(t,r,t_2,\cdots,t_m)\|_{L^2(\mathbb{R}^{m-1})}^2dr\\
&\leq & \frac{C}{m!}\int_{\mathbb{R}\setminus [\alpha, \beta]}\mathbb{E}\Big|\sum_{m\geq 1}mI_{m-1}(f^m_{n}(t,r,\cdot))\Big|^2dr\\
&\leq & \frac{C}{m!}\int_{\mathbb{R}\setminus [\alpha, \beta]}\mathbb{E}|\mathcal{D}_{r}v_n(t)|^2dr
\leq   C\epsilon.
\end{eqnarray*}
By now it has been showed that $\{f_n^m(\cdot,t), n\in\mathbb{N}\}$ is relatively compact in $L^2(\mathbb{R}^m)$ for each finite $m$ and fixed $t\in  [a,b]$, which is equivalent to $\{v_n(t),\ n\in\mathbb{N}\}$
being pointwise relatively compact in $L^2(\Omega)$ for any fixed $t$.

But it is known from hypothesis (ii) that $v_n$ are equi-continuous in time. So by generalized Arzel\`a-Ascoli Lemma, we conclude that  $\{v_n\}_{n=1}^\infty$ is relatively compact in $C([a,b], L^2(\Omega))$.
\end{proof}

\section*{Acknowledgements}

We would like to thank K. D. Elworthy, Y.Z. Hu, K. Khanin, Z. Lian, D. Nualart, S.G. Peng and Y.X. Yang 
for useful conversations. We are grateful to the referee for his/her useful comments.

   \end{document}